\documentclass[12pt]{article}
\usepackage{amssymb,amsmath,amsthm, amsfonts}
\DeclareMathOperator{\esssup}{ess\,sup}
\DeclareMathOperator{\essinf}{ess\,inf}

\usepackage{graphicx}
\usepackage{epsfig}
\usepackage{tikz}
\setlength{\itemsep}{-15pt}
\textwidth=17cm \textheight=23.5cm \headheight=0cm
\topmargin=1cm

\oddsidemargin 0cm \headsep=-1.0cm \raggedbottom

\theoremstyle{plain}
\newtheorem{theorem}{Theorem}[section]
\newtheorem{lemma}{Lemma}[section]

\theoremstyle{definition}
\newtheorem{definition}{Definition}[section]

\setcounter{equation}{0}

\numberwithin{equation}{section}

\allowdisplaybreaks \numberwithin{equation} {section}

\begin{document}
\allowdisplaybreaks
\title{\large\bf Stabilization of arbitrary structures in a  three-dimensional doubly degenerate nutrient taxis system}
\author{
{\rm De-Ji-Xiang-Mao, Ai Huang and Yifu Wang}\\
{\it\small  School of Mathematics and Statistics,
 Beijing Institute of Technology,100081, Beijing, P.R. China}
}

\date{}
\maketitle
\begin{abstract}
The doubly degenerate nutrient taxis system
\begin{equation}\label {0.1}
\left\{
\begin{aligned}
&u_{t}=\nabla \cdot (uv\nabla u)-\chi \nabla \cdot (u^{\alpha}v\nabla v)+\ell uv,&x\in \Omega,\, t>0,\\
& v_{t}=\Delta v-uv,&x\in \Omega,\, t>0,\\
\end{aligned}
\right.
\end{equation}
is considered under zero-flux boundary conditions in a smoothly bounded domain $\Omega\subset\mathbb{R}^3$ where $\alpha>0,\chi>0$ and $\ell> 0$. By developing a novel class of functional inequalities to address the challenges posed by
   the doubly degenerate diffusion mechanism in \eqref{0.1}, it is shown that for $\alpha\in(\frac{3}{2},\frac{19}{12})$, the associated initial-boundary value problem admits a global continuous  weak solution for sufficiently regular initial data.
Furthermore, in an appropriate topological setting, this solution converges to an equilibrium $(u_\infty, 0)$ as $t\rightarrow \infty$. Notably, the limiting  profile $u_{\infty}$ is  non-homogeneous when the initial signal concentration $v_0$ is sufficiently small, provided the initial data $u_0$ is not identically constant.
\end{abstract}

\vskip1mm
{\small {Mathematical Subject Classifications}: 35K59, 35K65, 35B40, 92C17.

{Keywords}: Nutrient taxis; doubly degenerate diffusion; chemotaxis; stability}

\vskip2mm

\section{Introduction}\label{Intro}
Significant research efforts have been devoted to characterizing the conditions under which parabolic systems can accurately model pattern formation in chemotaxis processes, ensuring consistency with experimental observations. A cornerstone of this research is the Keller-Segel model, in which the chemotactic-diffusion mechanism enables the description of collective microbial behavior driven by chemical gradients (\cite{KS,B,Ni}).

 Recent studies have demonstrated the crucial influence of chemotactic sensitivity function  on the dynamic prospects of chemotaxis system
 \begin{equation}\label{1.1}
\left\{
\begin{aligned}
&u_{t}= \nabla \cdot (D(u)\nabla u)-\nabla \cdot (S(u)\nabla v),&x\in \Omega,\, t>0,\\
& v_{t}=\Delta v -v+u,&x\in \Omega,\, t>0,\\
\end{aligned}
\right.
\end{equation}
posed on a smoothly bounded $n-$dimensional domain.
  Specifically, the ratio $\frac{S(u)}{D(u)}$ critically determines
whether solutions to the zero-flux initial-boundary value problem of \eqref{1.1}
are globally bounded or posses finite-time singularities. Indeed, when
 the diffusivity and sensitivity functions $D,S\in C^2(\mathbb R_+)$ satisfy $S(0)=0$ and the growth condition  $\frac{S(u)}{D(u)}\leq Cu^{\alpha}$ for all $u\geq1$ with $C>0$ and $\alpha<\frac{2}{n}$, the system  admits a global bounded classical solution (\cite{SKT, TaoW}). In contrast, when $\frac{S(u)}{D(u)}\geq Cu^{\alpha}$ for some $C>0$ and $\alpha>\frac{2}{n}$ at large values, the solution exhibits finite-time blow-up through concentration phenomena (\cite{TC1, TC2}).

 When chemotaxis directs motion  of bacteria (with density $u$) toward higher concentrations of nutrient (of
concentration $v$), the resulting dynamics are mathematically described by chemotaxis-consumption models of the form
\begin{equation}\label{1.2}
\left\{
\begin{aligned}
&u_{t}= \nabla \cdot (D(u)\nabla u)-\nabla \cdot (S(u)\nabla v),&x\in \Omega,\, t>0,\\
& v_{t}=\Delta v -vu,&x\in \Omega,\, t>0.\\
\end{aligned}
\right.
\end{equation}
In sharp contrast to the situation in \eqref{1.1}, due to the essentially dissipative character of   $v$-equation in \eqref{1.2},
 the latter system should exhibit some considerably stronger relaxation properties.  Notably, when considering the specific case with constant diffusivity $D(u)=1$  and linear sensitivity $S(u)=u$, subject to no-flux boundary conditions in bounded convex domains, the available literature reveals no evidence of non-trivial large-time dynamics for solutions of \eqref{1.2}.
 At the level of rigorous mathematical analysis, it is confirmed that for reasonably regular but arbitrarily large initial data, two-dimensional versions of (1.2) are known to possess global bounded classical  solutions (see \cite{Jinwang,TaoW2} for example); whereas when $n =3$, the certain global weak solutions  emanating from large initial data
 at least eventually become bounded and smooth, and particularly  approach the
unique relevant constant steady state in the large time limit (\cite{TaoW2,WinklerJDE2017}).

Recent experimental studies indicate that motility constraints in \it{Bacillus subtilis}
\rm  strains under nutrient-poor environments play a crucial role in the formation of diverse morphological aggregation patterns, including densely accumulating multiply branches structures (\cite{Fuji1, Fuji2, Matsushita}).
To gain a comprehensive understanding of the underlying mechanism, particularly the hypothesis that bacterial motility decreases under nutrient deprivation, a doubly degenerate nutrient taxis system of the form
\begin{equation}\label{1.3}
\left\{
\begin{aligned}
&u_{t}=\nabla \cdot (u^{m-1}v\nabla u)-\chi \nabla \cdot (S(u)v\nabla v)+f(u,v),&x\in \Omega,\, t>0,\\
& v_{t}=\Delta v-uv,&x\in \Omega,\, t>0,\\
\end{aligned}
\right.
\end{equation}
with $m=2, S(u)=u^2$, was  introduced  in \cite{Kawasaki, Leyva, Plaza}.
A remarkable feature of \eqref{1.3} is the doubly degenerate structure of the first equation, arising from two distinct mechanisms:
  (i) porous-medium-type degeneracy as $u$ approaches zero, and (ii) cross-degeneracy induced by the factor $ v$, which vanishes when $v$ tends to zero (as governed by the second equation).
  This peculiarity brings about noticeable mathematical challenges beyond classical porous medium-type diffusion theory.
  Hence, current understanding of even fundamental existence questions is still largely restricted to the global solvability in low-dimensional settings.

It is shown in  \cite{WCVPDE} that when $\chi=0$, $f(u,v)=0$ and $\Omega \subset\mathbb{R}^n$ is a bounded convex domain,
  the associated initial-boundary value problem for \eqref{1.3} possesses a global weak solution for the reasonably regular initial data. Moreover, within an appropriate topological setting,
  the solution can approach the non-homogeneous  steady-state $(u_\infty,0)$ in the large time limit.
  It is noticed that due to the  absence of taxis effects in \eqref{1.3}, the comparison principle
  allows for deriving local bounds for  $\|u(\cdot,t)\|_{L^\infty(\Omega)}$, and
   the boundedness of $\int_0^\infty\int_\Omega u^{q-1}v|\nabla u|^2$ for $ q\in(0,1) $ becomes the basic regularity property of \eqref{1.3}.  These estimates provide the essential foundation for establishing the global existence of  weak solution.
 However, when $\chi>0$,   the chemoattraction mechanisms in \eqref{1.3} may  exert considerably destabilizing effect,
 as evidenced by blow-up phenomena observed in analogous modeling contexts
of \eqref{1.1} (\cite{B,LW}).
 To the best of our knowledge,  the available analytical results for \eqref{1.3} with $\chi>0, f(u,v)=\ell uv$
  so far remain restricted to low-dimensional settings:

  i)
    In the \bf{one-dimensional} \rm case, when $m=2$ and $S(u)=u^2$, global existence of continuous weak solutions and their asymptotic stabilization toward non-homogeneous equilibria was established in \cite{Winkler3,LiWinkler1}.  These results were later extended to cases where either
    $2\leq m<3$ and $S(u)\leq Cu^{\alpha}$ for
  $\alpha\in[m-1,\frac{m}{2}+1]$ or $3\leq m<4$ (\cite{Wu}).

 ii) In \bf two-dimensional \rm counterparts,   global existence of weak solutions was first established  for $m=2$ and $S(u)=u^{\alpha}$ with  $1<\alpha<\frac{3}{2}$  (\cite{LiG}).  Subsequent work in \cite{WJDE}   extended this result to all  $\alpha<2$, and in particular, proved $L^{\infty}$-bounds for the solutions.  Notably, for the critical case
  $\alpha=2$, global existence of weak solutions was obtained under a smallness condition solely on the initial data $v_0$ in \cite{WNARWA}. This restriction was later  relaxed in \cite{LiYu}. Furthermore, the large-time behavior of solutions to
  \eqref{1.3}  has been studied in \cite{Wu1, Winkler5}  using techniques based on Harnack inequalities.

   In striking contrast to lower-dimensional cases where effective embedding theorems facilitate the analysis,  the study of the \bf{three-dimensional} \rm  version of \eqref{1.3} becomes significantly more subtle. To date, even fundamental questions regarding global solvability and boundedness, including the large-time behavior, remain largely unestablished in the literature.
 For instance,  the global solvability of \eqref{1.3}
    was established in \cite{LiG} when $\frac{7}{6}<\alpha<\frac{13}{9}$, which may be relaxed to $1<\alpha<\frac{3}{2}$
     (\cite{Wu}).
     Apart from that, the stabilizing effect of logistic-type source terms $f(u,v)=\rho u-\mu u^k$ on \eqref{1.3} is also investigated. For example, global existence of weak solutions was established
       in arbitrary dimensions $n\geq2$ under the
 condition  $k>\frac{n+2}{2}$ (\cite{XP}), and particularly the continuity of  weak solutions
was achieved when $n=k=2$ (\cite{LiWinkler2}).

The intention of the present work  is to develop an analytical approach that not only establishes the global solvability of system \eqref{1.3}, but also characterize the non-trivial long-time dynamics in three-dimensional domains.
 Specially, we shall consider  the initial-boundary value  problem
\begin{equation}\label{1.4}
\left\{
\begin{aligned}
&u_{t}=\nabla \cdot (uv\nabla u)-\chi \nabla \cdot (u^{\alpha}v\nabla v)+\ell uv,&x\in \Omega,\, t>0,\\
& v_{t}=\Delta v-uv,&x\in \Omega,\, t>0,\\
&(uv\nabla u-\chi u^{\alpha}v\nabla v)\cdot \nu=\nabla v\cdot \nu=0,&x\in \partial\Omega,\, t>0,\\
&u(x,0)=u_0(x), v(x,0)=v_0(x), &x\in \Omega,
\end{aligned}
\right.
\end{equation}
posed in a smoothly bounded domain $\Omega\subset \mathbb{R}^3$. Here, the parameters satisfy    $\chi>0$,  $\ell>0$  and $\alpha \in
(\frac{3}{2},\frac{19}{12})$.
The initial data $(u_0,v_0)$ are assumed to satisfy
\begin{equation}\label{1.5}
\left\{
\begin{aligned}
&u_0\in W^{1,\infty}(\Omega)\; \hbox{is  such  that}\; u_0\geq 0\; \hbox{and}\; u_0\not\equiv0, \quad \text{and} \\
&v_0\in W^{1,\infty}(\Omega)\; \hbox{is  such  that}\; v_0>0\,\, \text{in} \,\,\Omega\\
\end{aligned}
\right.
\end{equation}
as well as
\begin{align}\label{1.6}
    \|u_0\|_{L^{\infty}(\Omega)}+\|v_0\|_{L^{\infty}(\Omega)}+\|\nabla \ln v_0\|_{L^{\infty}(\Omega)}\leq K\quad \text{and}\quad \int_{\Omega}\ln{u_0}\geq -K
\end{align}
for some $K>0$.

In this  framework, we will firstly address the basic issue of the global existence and
boundedness of continuous weak solutions to \eqref{1.4}  when  the convexity of $\Omega$ is not necessary.

\begin{theorem}\label{Th1.1}
Let $\Omega \subset \mathbb R^3$ be a bounded domain with smooth boundary, and suppose that $\alpha \in \left( \frac{3}{2}, \frac{19}{12} \right)$ as well as
$\chi >0$ and $\ell>0$. Then for all $p>1$, one can find $C=C(p,K)>0$ with the property that whenever $u_0$ and $v_0$ fulfill
\eqref{1.5} and \eqref{1.6},
there exist
\begin{equation}\label{1.8}
\left\{
\begin{aligned}
&u\in C^{0}_{loc}(\overline{\Omega}\times [0,\infty))\cap L^{\infty}_{loc}(\overline{\Omega}\times [0,\infty)) \quad \text{and} \\
&v\in C^{0}_{loc}(\overline{\Omega}\times [0,\infty))\cap C^{2,1}_{loc}(\overline{\Omega}\times (0,\infty))\cap L^{\infty}_{loc}([0,\infty);W^{1,\infty}(\Omega))\\
\end{aligned}
\right.
\end{equation}
such that  $u\geq 0$ and $v>0$ a.e. in $\Omega\times(0,\infty)$, and that
$(u,v)$ forms a continuous global weak solution of \eqref{1.4} in the sense of Definition \ref{Definition 2.1} below. Moreover, we have
\begin{align}\label{1.9}
    \|u(\cdot,t)\|_{L^{p}(\Omega)}+\|v(\cdot,t)\|_{W^{1,\infty}(\Omega)}\leq C(p,K)\qquad for \;a.e.\; t>0.
\end{align}
\end{theorem}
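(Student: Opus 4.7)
My overall plan is to construct $(u,v)$ as a limit of non-degenerate approximations, to derive $\varepsilon$-uniform $L^p$-bounds on $u$ from an energy inequality closed by the new functional inequalities promised in the abstract, and to obtain $W^{1,\infty}$-control of $v$ via the Neumann heat semigroup. I would regularize \eqref{1.4} by replacing the flux coefficients $uv$ and $u^\alpha v$ by $(u+\varepsilon)(v+\varepsilon)$ and $(u+\varepsilon)^\alpha(v+\varepsilon)$ respectively, so that classical parabolic theory supplies global smooth positive solutions $(u_\varepsilon,v_\varepsilon)$ for each $\varepsilon\in(0,1)$. The maximum principle applied to the $v$-equation gives $0<v_\varepsilon\leq\|v_0\|_{L^\infty}$, while an evolution equation for $\int u_\varepsilon^\sigma$ with $\sigma\in(0,1)$ (or directly for $\int\ln u_\varepsilon$) is expected to propagate the entropy-type lower bound on $\int\ln u_\varepsilon$ encoded in \eqref{1.6}.

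The core estimate is the differential inequality
\begin{equation*}
\frac{d}{dt}\int_\Omega u_\varepsilon^p + c_p\int_\Omega v_\varepsilon u_\varepsilon^p|\nabla u_\varepsilon|^2 \leq C_p\int_\Omega v_\varepsilon u_\varepsilon^{p+2\alpha-2}|\nabla v_\varepsilon|^2 + \ell p\int_\Omega u_\varepsilon^p v_\varepsilon,
\end{equation*}
obtained by testing the $u_\varepsilon$-equation against $p u_\varepsilon^{p-1}$ and using Young's inequality to absorb half of the cross term into the degenerate dissipation. The left-hand dissipation is comparable with $\int v_\varepsilon\bigl|\nabla u_\varepsilon^{(p+2)/2}\bigr|^2$, but in three dimensions the usual Gagliardo--Nirenberg--Sobolev machinery is unable to absorb the taxis integral into this weighted quantity because $v_\varepsilon$ may become arbitrarily small. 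This is precisely the gap the paper's new functional inequalities are engineered to close, which I would invoke in a form such as
\begin{equation*}
\int_\Omega u^{p+2\alpha-2}v|\nabla v|^2 \leq \eta\int_\Omega v\bigl|\nabla u^{(p+2)/2}\bigr|^2 + C(\eta,p)\bigl(1+\|\nabla v\|_{L^\infty}^{\theta}\bigr)\Phi\bigl(\|u\|_{L^p},\,\textstyle\int\ln u\bigr)
\end{equation*}
for some $\theta<2$ and a locally bounded $\Phi$, valid precisely when $\alpha\in(\tfrac{3}{2},\tfrac{19}{12})$.

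To close the loop I must bound $\|\nabla v_\varepsilon(\cdot,t)\|_{L^\infty}$ a priori. For this I would apply the $L^q$--$W^{1,\infty}$ smoothing of the Neumann heat semigroup in the mild formulation of $v_t=\Delta v-u_\varepsilon v_\varepsilon$: since $\|v_\varepsilon\|_{L^\infty}$ is already known, it suffices to have $\|u_\varepsilon(\cdot,t)\|_{L^q}$ under control for some $q>3$, which the $L^p$-estimate above supplies for $p$ sufficiently large. The tension between the exponent $p$ needed for the $v$-gradient bound and the exponent $p+2\alpha-2$ appearing in the taxis integral is precisely what pins $\alpha$ into the narrow window $(\tfrac{3}{2},\tfrac{19}{12})$: the upper endpoint reflects the three-dimensional Sobolev scaling threshold beyond which absorption becomes infeasible uniformly in $p$, while the lower endpoint plausibly arises from a positivity or sign requirement inside the functional inequality. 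A Gronwall argument then delivers $\|u_\varepsilon(\cdot,t)\|_{L^p}+\|v_\varepsilon(\cdot,t)\|_{W^{1,\infty}}\leq C(p,K)$ uniformly in $\varepsilon$ and in $t>0$, which is exactly \eqref{1.9}.

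Finally, equipped with these bounds I would extract a subsequential limit $(u,v)$ by Aubin--Lions compactness, using the weighted dissipation to control $u_\varepsilon^{(p+2)/2}$ spatially on each set $\{v_\varepsilon>\delta\}$ together with the weak formulations for temporal equicontinuity, and identify the nonlinear limits through almost-everywhere convergence. Continuity of $u$ should follow from a De Giorgi--Moser-type argument tailored to the doubly degenerate structure, exploiting that $v$ is continuous and strictly positive on each fixed time slice. The principal obstacle I expect is the calibration inside the new functional inequality: it must simultaneously accommodate the three-dimensional Sobolev scaling, the $v_\varepsilon$-weighted dissipation, and the entropy bound from \eqref{1.6}, and do so in a way that the resulting Gronwall loop closes uniformly in $t$ rather than only on short time intervals. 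Once that is achieved, compactness, identification of limits, and De Giorgi continuity become comparatively routine.
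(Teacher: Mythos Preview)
Your general architecture (regularize, $L^p$ energy for $u$, semigroup for $\nabla v$, compactness) matches the paper, but the heart of the argument---the functional inequality and how the loop is closed---is off in a way that constitutes a genuine gap.

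You propose to absorb the taxis term by an inequality of the schematic form
\[
\int_\Omega u^{p+2\alpha-2}v|\nabla v|^2 \leq \eta\int_\Omega v\,|\nabla u^{(p+2)/2}|^2 + C(\eta,p)\bigl(1+\|\nabla v\|_{L^\infty}^{\theta}\bigr)\Phi,
\]
and then to close a Gronwall loop between $\|u\|_{L^p}$ and $\|\nabla v\|_{L^\infty}$. This is not what the paper does, and I do not see how to make such a loop close uniformly in $t$: the semigroup bound for $\nabla v$ requires $\sup_s\|u(\cdot,s)\|_{L^q}$ for some $q>3$, while your $L^p$ inequality would in turn depend on $\|\nabla v\|_{L^\infty}$, so you are bootstrapping two sup-in-time quantities against each other with no smallness and no decaying absorption term. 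The paper \emph{decouples} these completely. The correct partner for $\int u_\varepsilon^p$ is not $\|\nabla v_\varepsilon\|_{L^\infty}$ but the singular functional $\int |\nabla v_\varepsilon|^q/v_\varepsilon^{q-1}$, which satisfies its own closed differential inequality (Lemma~\ref{lemma2.7}). The actual functional inequality (Lemmas~\ref{lemma3.5} and~\ref{lemma3.7}) reads
\[
\int_\Omega \varphi^\beta\psi \leq \eta\int_\Omega \varphi^{p-1}\psi|\nabla\varphi|^2 + \eta\int_\Omega \varphi\,\frac{|\nabla\psi|^q}{\psi^{q-1}} + C\int_\Omega \varphi\psi,
\]
with no $\|\nabla\psi\|_{L^\infty}$ anywhere. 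Tracking $\mathcal{H}(t)=\int u_\varepsilon^p+\int|\nabla v_\varepsilon|^q/v_\varepsilon^{q-1}$ then gives $\mathcal{H}'\leq C(\int u_\varepsilon v_\varepsilon+\int v_\varepsilon+\int v_\varepsilon|\nabla v_\varepsilon|^2)$, and the right-hand side is \emph{integrable over} $(0,\infty)$ by the preliminary estimates---this is why the bound is uniform in $t$, not merely Gronwall-type.

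Moreover, the paper does not go directly to arbitrary $p$: there is a genuine bootstrap. One first controls $\int_0^\infty\!\int u_\varepsilon^{1-\alpha}v_\varepsilon|\nabla u_\varepsilon|^2$ and $\int_0^\infty\!\int u_\varepsilon|\nabla v_\varepsilon|^2/v_\varepsilon$ via the coupled functionals $-\frac{1}{2-\alpha}\int u_\varepsilon^{2-\alpha}+\frac{\chi(\alpha-1)}{2}\int|\nabla v_\varepsilon|^2$ and $a\int|\nabla v_\varepsilon|^2/v_\varepsilon-\int\ln u_\varepsilon$ (Lemmas~\ref{lemma2.3}, \ref{lemma2.5}); this is where the hypothesis $\int\ln u_0\geq -K$ from \eqref{1.6} enters. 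These feed, via Lemma~\ref{lemma2.6}, a bound on $\int_0^\infty\!\int u_\varepsilon^{5/3}v_\varepsilon$, which in turn lets one run $\mathcal{G}_\varepsilon=\int u_\varepsilon\ln u_\varepsilon+a\int|\nabla v_\varepsilon|^4/v_\varepsilon^3$ to get $\int_0^\infty\!\int u_\varepsilon|\nabla v_\varepsilon|^4/v_\varepsilon^3$ and $\int_0^\infty\!\int v_\varepsilon|\nabla u_\varepsilon|^2$. Only then does the first version of the functional inequality yield an $L^{p_0}$ bound with some $p_0>\tfrac32$; the restriction $\alpha<\tfrac{19}{12}$ appears precisely here (Lemma~\ref{lemma3.6}), to ensure $2p_0+4\alpha-7<k+\tfrac83$ for an admissible $k\in(1-\alpha,0)\cap(-1,-\tfrac13)$. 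With $p_0>\tfrac32$ in hand, the refined inequality of Lemma~\ref{lemma3.7} closes $\mathcal{H}$ for every $p>1$. The $W^{1,\infty}$ bound on $v_\varepsilon$ is derived \emph{afterwards}, as a consequence of the $L^p$ bounds, not as an ingredient in obtaining them.

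Finally, two smaller points: the paper regularizes by shifting the initial datum, $u_\varepsilon(\cdot,0)=u_0+\varepsilon$, leaving the PDE untouched, rather than by perturbing the flux coefficients; and continuity of $u$ comes from the Porzio--Vespri H\"older theory for doubly nonlinear equations once a \emph{lower} bound $v_\varepsilon\geq c(T)>0$ on finite time intervals is available (Lemma~\ref{lemma3.9}), not from a bespoke De~Giorgi argument.
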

By carefully tracking respective dependence on time, the estimates gained in the
proof of Theorem 1.1 already pave the way
for our subsequent asymptotic analysis. Indeed, by means of a duality-based argument, we can achieve
the following result on stability property enjoyed by function pairs $(u_0,0)$ for all suitably regular  $u_0$.  
\begin{theorem}\label{Th1.2}
Let $\Omega \subset \mathbb R^3$ be a bounded domain with smooth boundary, and suppose that $\alpha \in \left( \frac{3}{2}, \frac{19}{12} \right)$ as well as
$\chi >0$ and $\ell>0$. Then for each $\eta>0$, there exists $\delta_1=\delta_1(\eta, K)>0$ whenever $u_0$ and $v_0$ fulfill
\eqref{1.5} and \eqref{1.6}, as well as
$$\int_{\Omega}v_0\leq \delta_1,$$
the solution $(u,v)$ of \eqref{1.4} obtained in Theorem \ref{Th1.1} satisfies
\begin{align}\label{1.11-1}
    \|u(\cdot,t)-u_0\|_{(W^{1,\infty}(\Omega))^*}\leq \eta\qquad for \;all\; t>0.
\end{align}
\end{theorem}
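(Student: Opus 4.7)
The plan is a duality-based stability estimate. For an arbitrary $\varphi\in W^{1,\infty}(\Omega)$ with $\|\varphi\|_{W^{1,\infty}}\leq 1$, we test the weak form of the $u$-equation in \eqref{1.4} and integrate over $(0,t)$ to get
\begin{align*}
\int_\Omega \bigl(u(\cdot,t) - u_0\bigr)\varphi = - \int_0^t\!\!\int_\Omega uv\nabla u\cdot\nabla\varphi + \chi\int_0^t\!\!\int_\Omega u^\alpha v\nabla v\cdot\nabla\varphi + \ell\int_0^t\!\!\int_\Omega uv\varphi.
\end{align*}
We then bound each of the three terms on the right by a quantity of the form $C(K)\delta_1^\theta$ with some $\theta>0$, uniformly in $t>0$, so that the conclusion \eqref{1.11-1} follows by choosing $\delta_1=\delta_1(\eta,K)$ sufficiently small.

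The mass identity $\int_\Omega v(\cdot,t) + \int_0^t\!\!\int_\Omega uv = \int_\Omega v_0$ immediately yields $\int_0^\infty\!\!\int_\Omega uv \leq \delta_1$, while testing the $v$-equation against $v^{k-1}$ produces $\int_0^\infty\!\!\int_\Omega v^{k-2}|\nabla v|^2 \leq C(k,K)\delta_1$ for each $k\geq 2$. The reaction term is then controlled directly by $\ell\delta_1\|\varphi\|_\infty$. For the other two, we insert a factor of $\sqrt{uv}$ and apply Cauchy--Schwarz in spacetime,
\begin{align*}
\int_0^t\!\!\int_\Omega uv|\nabla u|\, |\nabla\varphi| &\leq \|\nabla\varphi\|_\infty\,\delta_1^{1/2}\Big(\int_0^t\!\!\int_\Omega uv|\nabla u|^2\Big)^{1/2},\\
\int_0^t\!\!\int_\Omega u^\alpha v|\nabla v|\, |\nabla\varphi| &\leq \|\nabla\varphi\|_\infty\,\delta_1^{1/2}\Big(\int_0^t\!\!\int_\Omega u^{2\alpha-1}v|\nabla v|^2\Big)^{1/2},
\end{align*}
using the decomposition $u^\alpha v|\nabla v| = \sqrt{uv}\cdot u^{\alpha-1/2}v^{1/2}|\nabla v|$ in the second estimate.

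The decisive step, and the main obstacle, is to show that the two dissipation-type integrals $\int_0^\infty\!\!\int_\Omega uv|\nabla u|^2$ and $\int_0^\infty\!\!\int_\Omega u^{2\alpha-1}v|\nabla v|^2$ are bounded in terms of $K$ alone. Testing the $u$-equation against $u^{p-1}$ gives an energy identity of the form
\begin{align*}
\tfrac{1}{p}\|u(\cdot,t)\|_{L^p}^p + (p-1)\!\!\int_0^t\!\!\int_\Omega u^{p-1}v|\nabla u|^2 \leq \tfrac{1}{p}\|u_0\|_{L^p}^p + C\!\!\int_0^t\!\!\int_\Omega u^{2\alpha+p-3}v|\nabla v|^2 + \ell\!\!\int_0^t\!\!\int_\Omega u^p v,
\end{align*}
whose right-hand side would ordinarily grow in $t$. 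However, under the assumption $\int_\Omega v_0\leq \delta_1$, the interpolation $\|v(t)\|_{L^q}\leq K^{1-1/q}\delta_1^{1/q}$, combined with the spacetime smallness of $uv$ and $v^{k-2}|\nabla v|^2$ and the uniform-in-time $L^p$-bounds for $u$ furnished by Theorem \ref{Th1.1}, allow these forcings to be reorganized and bounded uniformly in $t$ within the range $\alpha\in(\tfrac{3}{2},\tfrac{19}{12})$. Assembling the three pieces then yields $\|u(\cdot,t) - u_0\|_{(W^{1,\infty}(\Omega))^*} \leq \ell\delta_1 + C(K)\delta_1^{1/2}\leq \eta$ provided $\delta_1$ is small enough in terms of $\eta$ and $K$.
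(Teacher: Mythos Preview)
Your overall strategy---testing the weak $u$-equation against a fixed $\varphi\in W^{1,\infty}(\Omega)$ with $\|\varphi\|_{W^{1,\infty}}\le 1$ and bounding each of the three resulting spacetime integrals by $C(K)\,\delta_1^\theta$---is exactly the duality argument the paper uses (Lemmas~4.1--4.4). The difference lies only in the Cauchy--Schwarz splitting. The paper writes
\[
\int u v|\nabla u|\le\Big(\int \tfrac{v}{u}|\nabla u|^2\Big)^{\!1/2}\Big(\int u^3 v\Big)^{\!1/2},\qquad
\int u^\alpha v|\nabla v|\le\Big(\int v|\nabla v|^2\Big)^{\!1/2}\Big(\int u^{2\alpha} v\Big)^{\!1/2},
\]
and then applies a further H\"older step to each second factor to peel off $(\int uv)^{r/2}\le\delta_1^{r/2}$. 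This has the advantage that the remaining factors are controlled by the \emph{early} estimates \eqref{2.11}, \eqref{2.19-1} and \eqref{3.30-1}. Your splitting instead requires the bounds $\int_0^\infty\!\int_\Omega uv|\nabla u|^2\le C(K)$ and $\int_0^\infty\!\int_\Omega u^{2\alpha-1}v|\nabla v|^2\le C(K)$; the first is precisely \eqref{3.30} with $p=2$, and the second follows from \eqref{3.5} and \eqref{3.30-1} via Young's inequality. So your variant also works, and is arguably more direct.

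Where your write-up goes astray is the ``decisive step'' paragraph. You suggest that these dissipation integrals are controlled \emph{because} $\int_\Omega v_0\le\delta_1$ is small, invoking interpolation on $\|v(t)\|_{L^q}$ and the sketched $u^{p-1}$ energy identity. This is both unnecessary and potentially circular: if the bound on $\int_0^\infty\!\int_\Omega uv|\nabla u|^2$ depended on $\delta_1$ in an uncontrolled way, the product $C(K,\delta_1)^{1/2}\delta_1^{1/2}$ need not be small. In fact these spacetime bounds hold for \emph{all} data satisfying \eqref{1.6}, with constants depending only on $K$; they are established in Lemmas~\ref{lemma3.3} and \ref{lemma3.8} as part of the proof of Theorem~\ref{Th1.1}, without any smallness hypothesis on $v_0$. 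You should simply cite those estimates rather than rederive them. A second technical point: the paper carries out the argument for the regularized solutions $(u_\varepsilon,v_\varepsilon)$ and then passes to the limit (Lemma~4.2), since the weak formulation in Definition~\ref{Definition 2.1} does not directly admit time-independent $W^{1,\infty}$ test functions or testing by $u^{p-1}$.
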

Beyond that, we undertake a deeper qualitative analysis of the asymptotic properties of solutions.
Although Theorem \ref{Th1.2} guarantees that the solution component $u$ remains close to its initial data $u_0$ in the weak-$\ast$ topology of $(W^{1,\infty}(\Omega))^*$ for small $v_0$,  the fundamental question of whether the limiting profile  $u_\infty$  must be constant remains open.  We resolve this question by proving that when $u_0\not\equiv const$, sufficiently small initial concentrations of $v_0$  necessarily yield  nonconstant limiting profiles
 $u_\infty$.
\begin{theorem}\label{Th1.3}
Let $\Omega \subset \mathbb R^3$ be a bounded domain with smooth boundary, and suppose that $\alpha \in \left( \frac{3}{2}, \frac{19}{12} \right)$ as well as
$\chi >0$ and $\ell>0$. Then  there exists a nonnegative function $u_{\infty}\in \bigcap_{p\geq1}L^p(\Omega)$ such that as $t\to \infty$, the solution $(u,v)$ of \eqref{1.4} from Theorem \ref{Th1.1} satisfies
\begin{align}\label{1.10-1}
    u(\cdot,t)\rightharpoonup u_{\infty}\qquad in\;L^p(\Omega),\qquad v(\cdot,t)\rightarrow 0\qquad in\;W^{1,p}(\Omega)\; for \;all\; p\geq 1.
\end{align}
Moreover, for the given nonnegative function $u_0\not\equiv$ const., one can find
  $\delta_2=\delta_2(K, u_0)>0$ such that whenever $u_0$ and $v_0$ fulfill
\eqref{1.5} and \eqref{1.6}, as well as
$$\int_{\Omega}v_0\leq \delta_2,$$
the limit function satisfies
 $ u_{\infty}\not\equiv\mathrm{const}$.
\end{theorem}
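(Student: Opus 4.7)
The plan is to split Theorem \ref{Th1.3} into two logically independent parts: the qualitative asymptotics (existence of the weak limit $u_\infty$ and strong decay of $v$), which relies only on the uniform estimates of Theorem \ref{Th1.1} together with natural mass-balance identities; and the nonconstancy of $u_\infty$ under smallness of $v_0$, which will be a short duality consequence of Theorem \ref{Th1.2}.

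For the first part, testing the $v$-equation by $1$ and using $\nabla v\cdot\nu=0$ gives $\frac{d}{dt}\int_\Omega v=-\int_\Omega uv$, so that $\int_\Omega v(\cdot,t)$ is nonincreasing and
\begin{equation*}
\int_0^\infty\!\int_\Omega uv\,dx\,ds<\infty.
\end{equation*}
Testing the $u$-equation by $1$ yields $\frac{d}{dt}\int_\Omega(u+\ell v)=0$, so $\int_\Omega u(\cdot,t)$ is nondecreasing and bounded. Combining the space-time integrability of $uv$ with the uniform bounds $\|v(\cdot,t)\|_{W^{1,\infty}(\Omega)}\leq C$ and $\|u(\cdot,t)\|_{L^p(\Omega)}\leq C(p)$ from \eqref{1.9}, and exploiting the mild form of the $v$-equation to control $\partial_t v$ and thereby establish equicontinuity in $t$ of $\int_\Omega v$, I expect to upgrade the time-integrability to $\int_\Omega v(\cdot,t)\to 0$. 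Interpolating this with the uniform $W^{1,\infty}$-bound then gives $v(\cdot,t)\to 0$ in $W^{1,p}(\Omega)$ for every $p\geq 1$.

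For the weak convergence $u(\cdot,t)\rightharpoonup u_\infty$, I would fix $\varphi\in C^2(\overline\Omega)$ with $\partial_\nu\varphi=0$, test the $u$-equation by $\varphi$, and integrate by parts (rewriting the porous-medium flux as $uv\,\nabla u=\tfrac12 v\,\nabla u^2$) to obtain
\begin{equation*}
\frac{d}{dt}\int_\Omega u\,\varphi=\tfrac12\int_\Omega u^2\,\nabla v\cdot\nabla\varphi+\tfrac12\int_\Omega u^2 v\,\Delta\varphi+\chi\int_\Omega u^\alpha v\,\nabla v\cdot\nabla\varphi+\ell\int_\Omega uv\,\varphi.
\end{equation*}
The uniform $L^p$-bounds on $u$, together with $uv\in L^1(\Omega\times(0,\infty))$ and $\nabla v\in L^2(\Omega\times(0,\infty))$ (a by-product of the energy identity from testing the $v$-equation by $v$), render the right-hand side absolutely integrable in $t$, so that $t\mapsto\int_\Omega u(\cdot,t)\varphi$ admits a finite limit $\Lambda(\varphi)$. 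Since $\{u(\cdot,t)\}$ is bounded in every $L^p(\Omega)$ by \eqref{1.9} and the admissible $\varphi$'s are dense in $L^{p'}(\Omega)$, the functional $\Lambda$ extends to a bounded linear form on $L^{p'}$ and is represented by a nonnegative $u_\infty\in\bigcap_{p\geq 1}L^p(\Omega)$; this promotes mere subsequential weak-$L^p$ convergence to full weak-$L^p$ convergence.

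For the nonconstancy statement, since $u_0\not\equiv\mathrm{const}$ one can choose $\varphi_0\in W^{1,\infty}(\Omega)$ with $\int_\Omega\varphi_0=0$ and $M:=\int_\Omega u_0\varphi_0\neq 0$, for otherwise $u_0$ would be $L^2$-orthogonal to every zero-mean $W^{1,\infty}$-function and hence constant. Setting $\eta:=|M|/(2\|\varphi_0\|_{W^{1,\infty}(\Omega)})$ and $\delta_2:=\delta_1(\eta,K)$ from Theorem \ref{Th1.2}, the hypothesis $\int_\Omega v_0\leq\delta_2$ gives, for every $t>0$,
\begin{equation*}
\Bigl|\int_\Omega u(\cdot,t)\,\varphi_0-M\Bigr|\leq\|\varphi_0\|_{W^{1,\infty}(\Omega)}\,\|u(\cdot,t)-u_0\|_{(W^{1,\infty}(\Omega))^*}\leq\tfrac12|M|.
\end{equation*}
Letting $t\to\infty$ via the first part yields $\bigl|\int_\Omega u_\infty\varphi_0\bigr|\geq\tfrac12|M|>0$, whereas a constant $u_\infty$ would force $\int_\Omega u_\infty\varphi_0=u_\infty\int_\Omega\varphi_0=0$; this contradiction proves $u_\infty\not\equiv\mathrm{const}$. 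The main technical obstacle will sit in the first part, specifically in turning $uv\in L^1(\Omega\times(0,\infty))$ into the pointwise-in-time decay $\int_\Omega v(\cdot,t)\to 0$; this requires genuine equicontinuity-in-$t$ estimates drawn from the parabolic smoothing of $\partial_t v-\Delta v=-uv$, whereas Part (2) reduces to a one-line duality computation once Theorem \ref{Th1.2} is available.
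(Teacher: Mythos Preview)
Your decomposition into convergence and nonconstancy, and your nonconstancy argument via Theorem~\ref{Th1.2}, are correct and essentially match the paper's Lemma~4.6. The convergence part, however, has two concrete gaps.

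First, the claim that ``interpolating $\int_\Omega v(\cdot,t)\to 0$ with the uniform $W^{1,\infty}$-bound gives $v(\cdot,t)\to 0$ in $W^{1,p}(\Omega)$'' is wrong as stated: interpolating $\|v\|_{L^1}\to 0$ against $\|v\|_{W^{1,\infty}}\le C$ yields only $\|v\|_{L^p}\to 0$, with no control on $\|\nabla v\|_{L^p}$. The paper obtains gradient decay by a separate ODE argument (Lemma~4.5): testing the $v$-equation by $-\Delta v$ and using $\int_\Omega|\nabla v|^2\le \int_\Omega|\Delta v|^2+\tfrac14\int_\Omega v^2$ gives $\tfrac{d}{dt}\int_\Omega|\nabla v|^2+\int_\Omega|\nabla v|^2\le h(t)$ with $\int_{t-1}^t h\to 0$, whence $\int_\Omega|\nabla v|^2\to 0$. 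Also, your route to $\int_\Omega v(\cdot,t)\to 0$ via ``equicontinuity'' is not an argument: monotonicity already gives a limit $L\ge 0$, and the issue is to show $L=0$. The paper does this in one line using the estimate $\int_0^\infty\!\int_\Omega v<\infty$ from Lemma~2.5, which you do not invoke; with only your stated ingredients you would need the subsequence/compactness argument (choose $t_n$ with both $\int_\Omega|\nabla v(t_n)|^2\to 0$ and $\int_\Omega(uv)(t_n)\to 0$, pass to a uniform limit $v_\infty\equiv L/|\Omega|$, and use $\int_\Omega u(t_n)\ge\int_\Omega u_0>0$).

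Second, the assertion that the right-hand side of your tested $u$-equation is absolutely integrable on $(0,\infty)$ using only $uv\in L^1(\Omega\times(0,\infty))$, $\nabla v\in L^2(\Omega\times(0,\infty))$, and the uniform $L^p$-bounds on $u$ is not justified. For the terms $\tfrac12\int_\Omega u^2\,\nabla v\cdot\nabla\varphi$ and $\tfrac12\int_\Omega u^2 v\,\Delta\varphi$ the best pointwise-in-$t$ bounds from those ingredients are $C\|\nabla v(\cdot,t)\|_{L^2}$ and $C\|v(\cdot,t)\|_{L^q}$ respectively, neither of which is known to lie in $L^1(0,\infty)$ from $\nabla v\in L^2_{t,x}$ or from monotonicity of $\int_\Omega v$. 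The paper avoids this by not performing your second integration by parts: in Lemma~4.1 it bounds $\|u_t\|_{(W^{1,\infty}(\Omega))^*}$ by $\int_\Omega u v|\nabla u|+\chi\int_\Omega u^\alpha v|\nabla v|+\ell\int_\Omega uv$ and then uses the deeper space-time estimates $\int_0^\infty\!\int_\Omega \tfrac{v}{u}|\nabla u|^2<\infty$, $\int_0^\infty\!\int_\Omega v|\nabla v|^2<\infty$ (Lemma~2.5, Lemma~2.2) together with $\int_0^\infty\!\int_\Omega u^{p+1}v<\infty$ (Lemma~3.8) via H\"older in $x$ and Cauchy--Schwarz in $t$. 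Your second integration by parts does not buy anything here; you still need these Section~2--3 estimates (or equivalents) to close the argument.
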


\textbf{Main ideas.} The main challenge  in developing a mathematical theory for system
 \eqref{1.4} lies in effectively quantifying its dissipative mechanisms,  which
  are substantially weakened by the signal-dependent degeneracy in  the $u$-equation.
  Accordingly, existing literature has been limited to \eqref{1.4} posed on the bounded convex
 domain
 $\Omega \subset\mathbb{R}^n, n\leq 2$
  (\cite{Winkler3,WNARWA,WJDE,WCVPDE,LiWinkler1,LiG}). In non-convex three-dimensional domains, however, the analysis becomes considerably more intricate due to the loss of favorable embedding properties.

        Our approach is to improve the regularity of  \eqref{1.4} via the bootstrap argument. The most crucial step is to obtain the global boundedness of $\|u(\cdot,t)\|_{L^{p_0}(\Omega)}$ for some $p_0>\frac{3}{2}$. To this end,  we shall appropriately utilize the diffusion-induced contributions of the form $\int_{\Omega}u^{k}v|\nabla u|^2$ (for suitable $k$) to counteract the unfavorable terms typified by $\int_{\Omega}u^{\beta}v$.
    As the starting point of our analysis, we certify  that the integrals $\int_{\Omega}u^{1-\alpha}v|\nabla u|^2$ and $\int_{\Omega}u\frac{|\nabla v|^{2}}{v}$ are dissipated by tracking the
time evolution of $\int_{\Omega}u^{2-\alpha}$ and ${F}(t):=a\int_{\Omega} \frac{|\nabla v(\cdot,t)|^2}{v(\cdot,t)}-\int_{\Omega}\ln{u(\cdot,t)}$ (with some $a>0$) along trajectories of  \eqref{1.4}, as detailed in Lemma 2.3 and Lemma 2.5, respectively, which distinguishes our approach from that in \cite{LiG}. Moreover, differing from \cite{LiG,Wu}, our derivation of bounds for quantities such as
        \begin{align}\label{1.14}
        \int_0^{T}\int_\Omega u^{\frac{5}{3}}v
        \end{align}
         is to make use of the aforementioned dissipation terms
         and an interpolation inequality (Lemma \ref{lemma2.6}). Building upon \eqref{1.14}, we can estimate the
         unfavorable terms arising from the time evolution of the functional
         $$ \mathcal{ G}(t):=\int_{\Omega}u(\cdot,t)\ln{u}(\cdot,t)+b\int_{\Omega}\frac{|\nabla v(\cdot,t)|^4}{v^3(\cdot,t)}$$
        for some $b>0$, and thereby derive  the dissipated quantity of the form
        \begin{align}\label{1.15}
            \int_{\Omega}u^{p_*}v|\nabla u|^2  \quad \text{and}\quad  \int_{\Omega}u\frac{|\nabla v|^4}{v^3}
        \end{align}
         for all $p_*\in[1-\alpha, 0]$ (Lemma \ref{lemma3.3} and Lemma \ref{lemma3.4}). A further crucial ingredient in our analysis is the novel observation:
          Assume that $\Omega\subset \mathbb{R}^3$ is a  smoothly bounded domain, $L>0, k\in(-1,-\frac{1}{3})$ and
          $\beta\in[1,k+\frac{8}{3})$, then there exists  $C(L,k, \beta)>0$ such that
         \begin{align}\label{1.16}
             \int_{\Omega}\varphi^{\beta}\psi\leq  \int_{\Omega}\varphi^{k}\psi|\nabla \varphi|^2+\int_{\Omega}\varphi\frac{|\nabla \psi|^4}{\psi^3}+C(L,k, \beta)\int_{\Omega}\varphi \psi
         \end{align}
          holds for all sufficiently regular positive functions $\varphi$ and $\psi$ fulfilling $\int_{\Omega}\varphi\leq L$ (see Lemma \ref{lemma3.5}), which is somewhat different  from that in \cite{Wu}. Thanks to the estimates \eqref{1.15}, we then employ inequality \eqref{1.16} to achieve the desired $L^{p_0}$ bounds for $u$. This improved integrability of $u$ allows us to derive a refined version of \eqref{1.16} (see Lemma \ref{lemma3.7}) and accordingly  certify an energy-like property for the coupled quantity
         \begin{align*}
           \mathcal{ H}(t):=\int_{\Omega}\frac{|\nabla v(\cdot,t)|^q}{v^{q-1}(\cdot,t)}+\int_{\Omega}u^{p}(\cdot,t)
        \end{align*}
        for any fixed $p>1$.
         This property thereby yields the bounds for $\|u(\cdot,t)\|_{L^{p}(\Omega)}$ as detailed in Lemma \ref{lemma3.8}.

By suitably taking into account respective dependence  on time,
 we intend to characterize the large-time behavior of solution $(u,v)$ to system \eqref{1.4}. To achieve this, we employ a duality-based estimate for the time derivative $u_t$, establishing that
\begin{align}\label{1.18}
    \int_0^{\infty}\|u_t(\cdot,t)\|_{(W^{1,\infty}(\Omega))^*}dt\leq C(K)\cdot\left\{ \int_{\Omega} v_0\right\}^\sigma,
\end{align}
for some  $C(K)>0$  and $\sigma>0$ (see Lemma \ref{lemma4.1}). Thanks to  \eqref{1.18}, we prove that the solution of \eqref{1.4} converges to equilibrium state $(u_{\infty},0)$ as $t\to \infty$ (see Lemma \ref{lemma4.3} and Lemma \ref{lemma4.5}). Moreover, we  establish that for sufficiently small initial data $v_0$, the long-time limit $u_{\infty}$
  exhibits spatial heterogeneity (Lemma \ref{lemma4.6}).


The structure of this paper is as follows: In Section 2,  we specify the weak solutions of system  \eqref{1.4} and
 establishes fundamental a priori estimates for the regularized systems.
 In Section 3, crucial $L^p$-estimates for $u_\varepsilon$ are derived  through careful analysis of energy functionals and bootstrap
  arguments, which  needs  to effectively quantifying  the interplay between the weaken smoothing effects of random diffusion  and the potentially destabilizing influence of taxis-diffusion.
    Building upon these estimates and further higher regularity properties of the regularized systems, we then
    prove Theorem 1.1 via compactness arguments. In Section 4, we investigates the stability properties of $(u,v)$ by means of a duality-based argument.
\section{Preliminaries}\label{sec:preli}
In  view of the considered diffusion
degeneracy in \eqref{1.4}, the notation of  weak solutions  specified below seems  fairly natural (\cite{WJDE}), but it is somewhat
stronger  than those in \cite{LiG,WCVPDE}, as  it requires the integrability of $\nabla u$.

\begin{definition}\label{Definition 2.1} Assume  that \eqref{1.5} and \eqref{1.6} hold, suppose that $\chi >0$ and $\ell> 0$, and  that $\alpha\geq 1$. Then a pair of nonnegative functions
\begin{equation}\label{2.1}
\left\{
\begin{aligned}
&u\in C^{0}(\overline{\Omega}\times [0,\infty))\quad and \\
&v\in C^{0}(\overline{\Omega}\times [0,\infty))\cap C^{2,1}(\overline{\Omega}\times(0,\infty))\\
\end{aligned}
\right.
\end{equation}
such that
\begin{align}\label{2.2}
  u^2 \in L^{1}_{loc}([0,\infty); W^{1,1}(\Omega))\quad \hbox{and}\quad u^\alpha\nabla v \in L^{1}_{loc}(\overline{\Omega}\times
  [0,\infty);\mathbb{R}^3)
\end{align}
 will be called a continuous weak solution of \eqref{1.4} if
\begin{align}\label{2.3}
    -\int_{0}^{\infty}\!\int_{\Omega}u\varphi_t-\int_{\Omega}u_0\varphi(\cdot,0)=
    &-\frac{1}{2}\int_{0}^{\infty} \!\int_{\Omega}
     v \nabla u^2 \cdot\nabla\varphi 
     +\chi\int_{0}^{\infty}\!\int_{\Omega}u^{\alpha}v\nabla v\cdot \nabla \varphi
     +\ell\int_{0}^{\infty}\!\int_{\Omega}uv\varphi
\end{align}
for all
 $\varphi \in C_0^{\infty}(\overline{\Omega}\times [0,\infty))$
 fulfilling $\frac{\partial \varphi}{\partial \nu}=0$ on $\partial\Omega\times(0,\infty)$, and if
\begin{align}\label{2.4}
\int_{0}^{\infty}\int_{\Omega}v\varphi_t+\int_{\Omega}v_0\varphi(\cdot,0)=\int_{0}^{\infty}\int_{\Omega}\nabla
v\cdot\nabla\varphi+\int_{0}^{\infty}\int_{\Omega}uv\varphi.
\end{align}
for any $\varphi \in C_0^{\infty}(\overline{\Omega}\times [0,\infty))$.
\end{definition}

To construct the defined weak solutions above through approximation by the classical solutions to the conveniently regularized problems, we consider the the
regularized variants of \eqref{1.4} given by
\begin{equation}\label{2.5}
\left\{
\begin{aligned}
&u_{\varepsilon t}=\nabla \cdot (u_{\varepsilon}v_{\varepsilon}\nabla u_{\varepsilon})-\chi \nabla \cdot
(u^{\alpha}_{\varepsilon}v_{\varepsilon}\nabla v_{\varepsilon})+\ell u_{\varepsilon}v_{\varepsilon},&x\in \Omega,\, t>0,\\
& v_{\varepsilon t}=\Delta v_{\varepsilon}-u_{\varepsilon}v_{\varepsilon},&x\in \Omega,\, t>0,\\
& \frac{\partial u_{\varepsilon }}{\partial \nu}=\frac{\partial v_{\varepsilon }}{\partial \nu}=0,&x\in \partial\Omega,\, t>0,\\
& u_{\varepsilon }(x,0)=u_0(x)+\varepsilon,\;v_{\varepsilon }(x,0)=v_0(x),&x\in \Omega,\,
\end{aligned}
\right.
\end{equation}
for $\varepsilon\in(0,1)$. According to Lemma 2.1 of \cite{WNARWA}, we have the following statement regarding the existence, extensibility and  basic
properties of solutions to \eqref{2.5}.

\begin{lemma}\label{lemma2.1}
Let $\Omega \subset \mathbb R^3$ be a bounded domain with smooth boundary,
$\chi >0$, $\ell>0$ and   $\alpha\geq 1$, and assume that \eqref{1.5} and \eqref{1.6} hold.  Then for each $\varepsilon\in(0,1)$, there exist
$T_{max,\varepsilon}\in(0,\infty]$ and functions
\begin{equation}\label{2.6}
\left\{
\begin{aligned}
&u_{\varepsilon }\in C^0(\overline{\Omega}\times [0,T_{max,\varepsilon})\cap C^{2,1}(\overline{\Omega}\times(0,T_{max,\varepsilon})) \\
&v_{\varepsilon }\in  C^0(\overline{\Omega}\times[0,T_{max,\varepsilon}))\cap C^{2,1}(\overline{\Omega}\times(0,T_{max,\varepsilon})) \\
\end{aligned}
\right.
\end{equation}
such that $u_ \varepsilon >0$ and $v_{\varepsilon}>0$ in $\overline{\Omega} \times [0,T_{max,\varepsilon})$, that $(u_{\varepsilon
},v_{\varepsilon })$ solves \eqref{2.5} classically in $\Omega\times(0,T_{max,\varepsilon})$, and that
  \begin{equation} \label{2.7}
   \hbox{if}~~  T_{max,\varepsilon}<\infty, ~~ \hbox{then}~
   \limsup_{t\nearrow T_{max,\varepsilon}}\|u(\cdot , t)\|_{L^{\infty}(\Omega)}=\infty.
\end{equation}
Furthermore, this solution satisfies
\begin{align}\label{2.8}
    \|v_{\varepsilon}(\cdot , t)\|_{L^{\infty}(\Omega)}\leq \|v_{0 }\|_{L^{\infty}(\Omega)}\qquad for\; all\; t\in(0,T_{max,\varepsilon})
\end{align}
and
\begin{align}\label{2.9}
    \int_{\Omega}u_{0}\leq \int_{\Omega}u_{\varepsilon}(\cdot,t)\leq \int_{\Omega}u_{0}+\ell\int_{\Omega}v_{0}+|\Omega|\qquad for\;
    all\; t\in(0,T_{max,\varepsilon})
\end{align}
as well as
\begin{align}\label{2.10}
    \int_{0}^{T_{max,\varepsilon}}\int_{\Omega}u_{\varepsilon }v_{\varepsilon }\leq \int_{\Omega}v_{0}.
\end{align}
\end{lemma}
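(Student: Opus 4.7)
The plan is to treat \eqref{2.5} as a regularized non-degenerate parabolic system and then argue each conclusion separately. Since the initial data $u_0+\varepsilon$ is strictly positive and $v_0>0$ in $\overline{\Omega}$, the diffusion coefficient $u_\varepsilon v_\varepsilon$ appearing in the first equation is bounded away from zero near $t=0$, so standard parabolic theory for quasilinear systems (e.g.\ Amann's theorem or a Schauder-type fixed-point argument) yields local-in-time classical solvability in the regularity class stated in \eqref{2.6}, together with the extensibility alternative \eqref{2.7}. Positivity of $v_\varepsilon$ follows from the strong maximum principle applied to $v_{\varepsilon t}-\Delta v_\varepsilon+u_\varepsilon v_\varepsilon=0$, since the coefficient $u_\varepsilon$ is continuous and bounded on $\overline{\Omega}\times[0,t]$ for $t<T_{max,\varepsilon}$. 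Positivity of $u_\varepsilon$ is obtained by rewriting the first equation in non-divergence form and applying the maximum principle with the strictly positive initial datum $u_0+\varepsilon$, or equivalently by the parabolic comparison argument that gives $u_\varepsilon\geq\varepsilon e^{-Ct}$ for suitable $C$ depending on the smooth approximating solution.

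Next I would establish \eqref{2.8} by applying the maximum principle directly to the second equation: the reaction term $-u_\varepsilon v_\varepsilon$ is non-positive whenever $v_\varepsilon\geq 0$, so
\begin{equation*}
\|v_\varepsilon(\cdot,t)\|_{L^\infty(\Omega)}\leq \|v_0\|_{L^\infty(\Omega)}
\end{equation*}
for all $t\in(0,T_{max,\varepsilon})$. For the mass bounds \eqref{2.9}, I integrate the first equation against $\varphi\equiv 1$ using the no-flux condition to get
\begin{equation*}
\frac{d}{dt}\int_\Omega u_\varepsilon=\ell\int_\Omega u_\varepsilon v_\varepsilon\geq 0,
\end{equation*}
which yields $\int_\Omega u_\varepsilon(\cdot,t)\geq\int_\Omega u_0+\varepsilon|\Omega|\geq\int_\Omega u_0$. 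Analogously, integrating the second equation gives
\begin{equation*}
\frac{d}{dt}\int_\Omega v_\varepsilon=-\int_\Omega u_\varepsilon v_\varepsilon,
\end{equation*}
so adding $\ell$ times this identity to the one above shows that $\int_\Omega u_\varepsilon+\ell\int_\Omega v_\varepsilon$ is conserved. Consequently
\begin{equation*}
\int_\Omega u_\varepsilon(\cdot,t)=\int_\Omega u_0+\varepsilon|\Omega|+\ell\int_\Omega v_0-\ell\int_\Omega v_\varepsilon(\cdot,t)\leq \int_\Omega u_0+|\Omega|+\ell\int_\Omega v_0,
\end{equation*}
where we used $\varepsilon<1$ and $v_\varepsilon\geq 0$, which is exactly the upper bound in \eqref{2.9}.

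Finally, \eqref{2.10} is an immediate consequence of the mass identity for the second equation: integrating in time from $0$ to any $t<T_{max,\varepsilon}$ yields
\begin{equation*}
\int_\Omega v_\varepsilon(\cdot,t)+\int_0^t\int_\Omega u_\varepsilon v_\varepsilon=\int_\Omega v_0,
\end{equation*}
and the non-negativity of $v_\varepsilon$ together with taking $t\nearrow T_{max,\varepsilon}$ (via monotone convergence on the space-time integral) delivers the claimed bound. The main technical obstacle is really only the local existence step, where one must carefully invoke the quasilinear parabolic machinery for a strongly coupled system with non-standard taxis nonlinearity $u_\varepsilon^\alpha v_\varepsilon\nabla v_\varepsilon$; however, because the regularization renders both diffusion coefficients strictly positive and smooth on each slab $\overline{\Omega}\times[0,t]$, $t<T_{max,\varepsilon}$, this is largely parallel to the argument carried out in \cite{WNARWA}, and the remaining properties \eqref{2.8}--\eqref{2.10} are essentially direct testing computations.
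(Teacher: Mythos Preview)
Your proposal is correct and aligns with the paper's approach: the paper itself does not give a proof but simply cites Lemma~2.1 of \cite{WNARWA} for the entire statement, and you likewise defer the local existence and extensibility to that reference while spelling out the maximum-principle and mass-integration arguments for \eqref{2.8}--\eqref{2.10}. Your explicit derivations of the mass bounds via the conservation law $\int_\Omega u_\varepsilon+\ell\int_\Omega v_\varepsilon=\mathrm{const}$ are exactly what underlies the cited result, so there is nothing to add.
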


Throughout the sequel, unless explicitly stated otherwise, we shall assume  that
$\Omega\subset \mathbb R^3$  is a smoothly bounded domain (the convexity of $\Omega$ is not required),
$\chi >0$, $\ell> 0$ and   $\alpha\geq 1$. Moreover, whenever  functions  $u_0$ and $v_0$ fulfilling \eqref{1.5}--\eqref{1.6}
have been selected,  we shall  denote by $(u_{\varepsilon
},v_{\varepsilon })$  the solutions given by Lemma 2.1, with $T_{max,\varepsilon}$ representing their maximal existence time.

Due to the nonnegativity of $u_{\varepsilon} $ and $v_{\varepsilon} $, the evolution of $v_{\varepsilon}$ yields the following estimate.
\begin{lemma}\label{lemma2.2}For any $K>0$ with the property that \eqref{1.6} is valid, one can
find $C=C(K)>0$ such that
\begin{align}\label{2.11}
    \int_0^{T_{max,\varepsilon}}\int_{\Omega}v_{\varepsilon}|\nabla v_{\varepsilon}|^2\leq C.
\end{align}
\end{lemma}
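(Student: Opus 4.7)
The plan is to test the $v_\varepsilon$-equation against a power of $v_\varepsilon$ chosen so that the diffusive term, after integration by parts, produces exactly $v_\varepsilon |\nabla v_\varepsilon|^2$. Since
$$-\int_\Omega (\Delta v_\varepsilon)\,v_\varepsilon^2 = \int_\Omega \nabla v_\varepsilon\cdot\nabla(v_\varepsilon^2) = 2\int_\Omega v_\varepsilon|\nabla v_\varepsilon|^2$$
(using the homogeneous Neumann condition from \eqref{2.5}), the natural test function is $v_\varepsilon^2$.

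Multiplying the second equation of \eqref{2.5} by $v_\varepsilon^2$ and integrating over $\Omega$ yields
$$\frac{1}{3}\frac{d}{dt}\int_\Omega v_\varepsilon^3 + 2\int_\Omega v_\varepsilon|\nabla v_\varepsilon|^2 + \int_\Omega u_\varepsilon v_\varepsilon^3 = 0$$
on $(0,T_{max,\varepsilon})$. Since Lemma \ref{lemma2.1} guarantees $u_\varepsilon\geq 0$ and $v_\varepsilon\geq 0$ throughout, the last term on the left is nonnegative, so integrating in time from $0$ to any $T\in(0,T_{max,\varepsilon})$ gives
$$2\int_0^{T}\!\int_\Omega v_\varepsilon|\nabla v_\varepsilon|^2 \leq \frac{1}{3}\int_\Omega v_0^3.$$

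It then only remains to bound the right-hand side in terms of $K$. The $L^\infty$-bound $\|v_0\|_{L^\infty(\Omega)}\leq K$ from \eqref{1.6} immediately yields $\int_\Omega v_0^3 \leq K^3|\Omega|$, so the claim follows with $C=C(K):=\tfrac{1}{6}K^3|\Omega|$ upon letting $T\nearrow T_{max,\varepsilon}$ by monotone convergence.

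There is no serious obstacle here: the only thing to be careful about is choosing the correct test function (the power $v_\varepsilon^2$ rather than $v_\varepsilon$, which would only produce the standard $\int|\nabla v_\varepsilon|^2$ dissipation), and noting that the absorptive term $-u_\varepsilon v_\varepsilon$ in the $v$-equation contributes a favorably signed $-\int u_\varepsilon v_\varepsilon^3$ that can be discarded. The estimate is independent of $\varepsilon$ and of $T_{max,\varepsilon}$, as required for later passage to the limit.
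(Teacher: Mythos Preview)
Your proof is correct and follows essentially the same approach as the paper: both test the $v_\varepsilon$-equation against $v_\varepsilon^2$, drop the nonnegative absorption term $\int_\Omega u_\varepsilon v_\varepsilon^3$, and bound $\int_\Omega v_0^3$ via \eqref{1.6}. Your version is slightly more explicit in tracking the constant $C(K)=\tfrac{1}{6}K^3|\Omega|$, but the argument is identical.
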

\begin{proof}
    Multiplying the second equation in \eqref{2.5} by $v_{\varepsilon}^2$, we have
    \begin{align}\label{2.12}
    \frac{d}{dt}\int_{\Omega}v_{\varepsilon}^3+6\int_{\Omega}v_{\varepsilon}|\nabla
    v_{\varepsilon}|^2=-3\int_{\Omega}u_{\varepsilon}v_{\varepsilon}^3\leq 0\qquad for\;
    all\; t\in(0,T_{max,\varepsilon}).
\end{align}
Integrating \eqref{2.12} over $(0,T_{max,\varepsilon})$ and using \eqref{1.5}, we then obtain \eqref{2.11}.
\end{proof}
It is noticed  that in  \cite{LiG} 
the bounds of $\int_0^{T_{\max, \varepsilon}}\int_{\Omega} u^{2-2\alpha}_{\varepsilon} v_{\varepsilon}|\nabla
u_{\varepsilon }|^2$ was utilized as a basic piece of information in  deriving   the bounds for $\int_\Omega u^{p_0}_\varepsilon(\cdot,t)$ with some $p_0>1$.
In contrast, our approach in this paper takes
$ \int_0^{T_{\max, \varepsilon}}\int_{\Omega} u_{\varepsilon}^{1-\alpha}v_{\varepsilon}|\nabla  u_{\varepsilon }|^2$
as the starting point for the bootstrap argument, which is achieved  by analyzing the temporal evolution of $
-\frac{1}{2-\alpha}\int_{\Omega} u_{\epsilon}^{2-\alpha}+
  \frac{\chi(\alpha-1)}{2} \int_{\Omega} |\nabla v_{\varepsilon }|^2$ rather
$-\frac{1}{3-2\alpha}\int_{\Omega}u_{\varepsilon}^{3-2\alpha}$.
\begin{lemma}\label{lemma2.3} Let $\alpha \in\left(1,2\right]$. Then for all $K>0$ with the property that
 \eqref{1.6} is valid, one can find $C=C(K)>0$ such that
\begin{align}\label{2.13}
\int_0^{T_{\max, \varepsilon}}\int_{\Omega} u_{\varepsilon}^{1-\alpha}v_{\varepsilon}|\nabla  u_{\varepsilon }|^2 \leq C  .
\end{align}
\end{lemma}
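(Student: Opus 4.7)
The plan is to test the first equation in \eqref{2.5} against $u_\varepsilon^{1-\alpha}$ --- admissible on $[0,T_{\max,\varepsilon})$ since $u_\varepsilon>0$ by Lemma \ref{lemma2.1} --- and to monitor the evolution of the functional
\begin{align*}
F_\varepsilon(t):=-\frac{1}{2-\alpha}\int_\Omega u_\varepsilon^{2-\alpha}(\cdot,t)+\frac{\chi(\alpha-1)}{2}\int_\Omega|\nabla v_\varepsilon(\cdot,t)|^2
\end{align*}
singled out in the text (at the endpoint $\alpha=2$ the first summand is replaced by $-\int_\Omega\ln u_\varepsilon$, whose initial value is controlled by the hypothesis $\int_\Omega \ln u_0\geq -K$ in \eqref{1.6}, and the argument below adapts verbatim). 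A routine integration by parts using the no-flux boundary conditions in \eqref{2.5} produces
\begin{align*}
\frac{d}{dt}\left(-\frac{1}{2-\alpha}\int_\Omega u_\varepsilon^{2-\alpha}\right)=-(\alpha-1)\int_\Omega u_\varepsilon^{1-\alpha}v_\varepsilon|\nabla u_\varepsilon|^2+\chi(\alpha-1)\int_\Omega v_\varepsilon\nabla u_\varepsilon\cdot\nabla v_\varepsilon-\ell\int_\Omega u_\varepsilon^{2-\alpha}v_\varepsilon,
\end{align*}
whose first summand is precisely the dissipation one wants to control.

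The critical step is to discipline the sign-indefinite cross term. A further integration by parts gives $\int_\Omega v_\varepsilon\nabla u_\varepsilon\cdot\nabla v_\varepsilon=-\int_\Omega u_\varepsilon|\nabla v_\varepsilon|^2-\int_\Omega u_\varepsilon v_\varepsilon\Delta v_\varepsilon$; substituting the pointwise relation $u_\varepsilon v_\varepsilon=\Delta v_\varepsilon-v_{\varepsilon t}$ coming from the second equation of \eqref{2.5} and using the standard identity $\int_\Omega v_{\varepsilon t}\Delta v_\varepsilon=-\frac{1}{2}\frac{d}{dt}\int_\Omega|\nabla v_\varepsilon|^2$ rewrites $\int_\Omega u_\varepsilon v_\varepsilon\Delta v_\varepsilon=\int_\Omega(\Delta v_\varepsilon)^2+\frac{1}{2}\frac{d}{dt}\int_\Omega|\nabla v_\varepsilon|^2$. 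Transferring the emerging time-derivative to the left with coefficient $\frac{\chi(\alpha-1)}{2}$ produces exactly the functional $F_\varepsilon$ and the identity
\begin{align*}
F_\varepsilon'(t)=-(\alpha-1)\int_\Omega u_\varepsilon^{1-\alpha}v_\varepsilon|\nabla u_\varepsilon|^2-\chi(\alpha-1)\int_\Omega u_\varepsilon|\nabla v_\varepsilon|^2-\chi(\alpha-1)\int_\Omega(\Delta v_\varepsilon)^2-\ell\int_\Omega u_\varepsilon^{2-\alpha}v_\varepsilon,
\end{align*}
in which every summand on the right is non-positive thanks to $\alpha>1$.

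Integrating this identity from $0$ to $T\in(0,T_{\max,\varepsilon})$ reduces \eqref{2.13} to bounding both $F_\varepsilon(0)$ and $-F_\varepsilon(T)$ from above by constants depending only on $K$. The former is immediate from \eqref{1.6}: since $|\nabla v_0|=v_0|\nabla\ln v_0|\leq K^2$ pointwise, one has $\frac{\chi(\alpha-1)}{2}\|\nabla v_0\|_{L^2(\Omega)}^2\leq C(K)$, and the $u_0$ contribution is non-positive. For the latter, the $|\nabla v_\varepsilon(\cdot,T)|^2$ part of $-F_\varepsilon(T)$ has the unfavourable sign and is simply dropped, leaving $\frac{1}{2-\alpha}\int_\Omega u_\varepsilon^{2-\alpha}(\cdot,T)$; since $2-\alpha\in(0,1)$, Jensen's inequality combined with the mass bound \eqref{2.9} delivers $\int_\Omega u_\varepsilon^{2-\alpha}\leq|\Omega|^{\alpha-1}\bigl(\int_\Omega u_\varepsilon\bigr)^{2-\alpha}\leq C(K)$ uniformly in $t$, and division by $\alpha-1$ completes the proof. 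The main --- indeed only --- obstacle in the whole plan is the cross-term bookkeeping: the coefficients must line up so that the emerging $\frac{d}{dt}\int_\Omega|\nabla v_\varepsilon|^2$ is absorbed into $F_\varepsilon$ with the correct sign and the residual $-\chi(\alpha-1)\int_\Omega(\Delta v_\varepsilon)^2$ emerges with the favourable sign, which is precisely what pins down both the specific functional $F_\varepsilon$ and the restriction $\alpha>1$.
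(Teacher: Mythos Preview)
Your proof is correct and follows essentially the same route as the paper: both test the first equation against $u_\varepsilon^{1-\alpha}$ and handle the cross term $\chi(\alpha-1)\int_\Omega v_\varepsilon\nabla u_\varepsilon\cdot\nabla v_\varepsilon$ by testing the $v_\varepsilon$-equation against $\Delta v_\varepsilon$ (the paper does this as a separate identity \eqref{2.15}, you do it inline via the substitution $u_\varepsilon v_\varepsilon=\Delta v_\varepsilon-v_{\varepsilon t}$), arriving at the same functional $F_\varepsilon$ and the same monotonicity. Your version is in fact slightly sharper --- you obtain an identity rather than an inequality, and you explicitly address the endpoint $\alpha=2$ via $-\int_\Omega\ln u_\varepsilon$, which the paper's formula $\frac{1}{2-\alpha}\int_\Omega u_\varepsilon^{2-\alpha}$ leaves undefined.
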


\begin{proof}
Multiplying the equations in \eqref{2.5} by $u_{\epsilon}^{1-\alpha}$ and $\Delta v_{\varepsilon }$ respectively, we obtain
\begin{align}\label{2.14}
    -\frac{1}{2-\alpha}\frac{d}{dt}\int_{\Omega}u_{\varepsilon}^{2-\alpha}+(\alpha-1)\int_{\Omega}
    u_{\varepsilon}^{1-\alpha}v_{\varepsilon}|\nabla  u_{\varepsilon }|^2=(\alpha-1)\chi\int_{\Omega} v_{\varepsilon}\nabla
    u_{\varepsilon}\cdot \nabla v_{\varepsilon}-\ell\int_{\Omega} u_{\varepsilon}^{2-\alpha}v_{\varepsilon}
\end{align}
and
\begin{align}\label{2.15}
    \frac{1}{2}\frac{d}{dt}\int_{\Omega}|\nabla v_{\varepsilon}|^2+\int_{\Omega}|\Delta
    v_{\varepsilon}|^2=\int_{\Omega}u_{\varepsilon}v_{\varepsilon}\Delta v_{\varepsilon}=-\int_{\Omega} v_{\varepsilon}\nabla
    u_{\varepsilon}\cdot \nabla v_{\varepsilon}-\int_{\Omega}u_{\varepsilon}|\nabla v_{\varepsilon}|^2
\end{align}
for all $t \in(0, T_{\max, \varepsilon})$. From \eqref{2.14} and \eqref{2.15}, we have
\begin{align*}
&\frac{d}{dt}\left\{-\frac{1}{2-\alpha}\int_{\Omega} u_{\epsilon}^{2-\alpha}+
  \frac{\chi(\alpha-1)}{2} \int_{\Omega} |\nabla v_{\varepsilon }|^2\right\}+(\alpha-1)\int_{\Omega}
    u_{\varepsilon}^{1-\alpha}v_{\varepsilon}|\nabla  u_{\varepsilon }|^2\\
&\leq -\ell\int_{\Omega} u_{\varepsilon}^{2-\alpha}v_{\varepsilon}-\chi(\alpha-1)\int_{\Omega}u_{\varepsilon}|\nabla v_{\varepsilon}|^2
\end{align*}
for all $t \in(0, T_{\max, \varepsilon})$. Integrating the above differential inequality on $(0,t)$, along with \eqref{1.6} and \eqref{2.9} we then get
\begin{align*}
& \frac{1}{2-\alpha}\int_{\Omega} u_{0\epsilon}^{2-\alpha}+
  \frac{\chi(\alpha-1)}{2} \int_{\Omega} |\nabla v_{\varepsilon }|^2
   +(\alpha-1)\int_0^{t}\int_{\Omega}
    u_{\varepsilon}^{1-\alpha}v_{\varepsilon}|\nabla  u_{\varepsilon }|^2\nonumber\\
    &+\ell\int_0^{t}\int_{\Omega} u_{\varepsilon}^{2-\alpha}v_{\varepsilon}+\chi(\alpha-1)\int_0^{t}\int_{\Omega}u_{\varepsilon}|\nabla
   v_{\varepsilon}|^2\nonumber \\
   = &\frac{1}{2-\alpha}\int_{\Omega} u_{\varepsilon}^{2-\alpha}+
  \frac{\chi(\alpha-1)}{2} \int_{\Omega} |\nabla v_{0 }|^2\nonumber\\
   \leq &\frac{1}{2-\alpha}\int_{\Omega}\left(u_{\varepsilon}+1\right)+
  \frac{\chi(\alpha-1)}{2} \int_{\Omega} |\nabla v_{0 }|^2\nonumber\\
   \leq &C(K)
\end{align*}
for all $t \in(0, T_{\max, \varepsilon})$, due to $2-\alpha\in [0,1)$, which implies \eqref{2.13} immediately.
\end{proof}

When the  convexity of $\Omega$ is not assumed, the following estimate indicates  how the related boundary integral over $\partial \Omega$ can be controlled by the corresponding higher-order integrals on $\Omega$ involving singular weights.
\begin{lemma}\label{lemma2.4}(\cite{WDCDSB})
    Let $q\geq2$ and $\eta>0$. Then there exists $C=C(\eta,q)>0$ with the property that
    \begin{align}\label{2.16}
        \int_{\partial\Omega}\psi^{-q+1}|\nabla\psi|^{q-2}\cdot\frac{\partial|\nabla \psi|}{\partial\nu}\leq \eta
        \int_{\Omega}\psi^{-q+1}|\nabla\psi|^{q-2}|D^2\psi|^2+\eta\int_{\Omega}\psi^{-q-1}|\nabla\psi|^{q+2}+C\int_{\Omega}\psi
    \end{align}
    for any $\psi\in C^2(\overline{\Omega})$ which is such that $\psi>0$ in $\overline{\Omega}$ and $\frac{\partial\psi}{\partial\nu}=0$ on
    $\partial\Omega$.
\end{lemma}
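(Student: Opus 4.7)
The proof proceeds in three steps: reduce the boundary integral using the Neumann hypothesis, convert it into a volume integral, and close the estimate via Young's inequality.

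First, since $\partial\psi/\partial\nu = 0$ on $\partial\Omega$, the gradient $\nabla\psi$ is tangential to $\partial\Omega$, and a standard computation involving the second fundamental form $\mathrm{II}$ of $\partial\Omega$ yields the pointwise identity $\partial|\nabla\psi|^2/\partial\nu = -2\,\mathrm{II}(\nabla\psi,\nabla\psi)$ on $\partial\Omega$. Because $\partial\Omega$ is smooth and compact, $\mathrm{II}$ is uniformly bounded, so $\partial|\nabla\psi|/\partial\nu \leq \kappa_\Omega |\nabla\psi|$ holds on $\partial\Omega$ wherever $|\nabla\psi|>0$ (with the bound extending trivially through the zero set). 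Consequently, the left-hand side of \eqref{2.16} is dominated by $\kappa_\Omega\int_{\partial\Omega}\psi^{-q+1}|\nabla\psi|^{q-1}\,dS$.

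Second, to convert this boundary integral into a volume one, I would choose a smooth vector field $X\in C^1(\overline{\Omega};\mathbb{R}^3)$ satisfying $X\cdot\nu = 1$ on $\partial\Omega$, which exists by the smoothness of $\partial\Omega$ (extend the outward unit normal along a tubular neighborhood and cut off). The divergence theorem then gives
\begin{equation*}
\int_{\partial\Omega}\psi^{-q+1}|\nabla\psi|^{q-1}\,dS = \int_\Omega\nabla\cdot\bigl(\psi^{-q+1}|\nabla\psi|^{q-1}X\bigr)\,dx,
\end{equation*}
and expanding the divergence produces three intermediate volume integrals whose integrands are controlled, respectively, by $\psi^{-q}|\nabla\psi|^q$, $\psi^{-q+1}|\nabla\psi|^{q-2}|D^2\psi|$ (after invoking $|\nabla|\nabla\psi||\leq|D^2\psi|$), and $\psi^{-q+1}|\nabla\psi|^{q-1}$.

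Third, each of these must be bounded by $\eta$-small multiples of the first two terms on the right-hand side of \eqref{2.16} plus a $C\int_\Omega\psi$ remainder. The middle integral is dispatched directly by Cauchy--Schwarz, which yields the target term $\psi^{-q+1}|\nabla\psi|^{q-2}|D^2\psi|^2$. For the first and third, I would perform an auxiliary integration by parts: using $\nabla\psi\cdot\nu = 0$, the identity
\begin{equation*}
\int_\Omega\psi^{-q}|\nabla\psi|^q = \frac{1}{q-1}\int_\Omega\psi^{1-q}\,\nabla\cdot\bigl(|\nabla\psi|^{q-2}\nabla\psi\bigr)
\end{equation*}
trades a gradient power for a Hessian power, and then $|\nabla\cdot(|\nabla\psi|^{q-2}\nabla\psi)|\leq C(q)|\nabla\psi|^{q-2}|D^2\psi|$ combined with Cauchy--Schwarz and Young's inequality at suitably conjugate exponents produces the desired three-term splitting. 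The main obstacle is the algebraic bookkeeping: the exponent pairs of the intermediate integrands do not lie on the interpolation line $a+b=1$ along which Young's inequality can freely interpolate between $\psi^{-q-1}|\nabla\psi|^{q+2}$ and $\psi$, so one must first trade gradient powers for Hessian powers through the auxiliary integration by parts above before Young's estimate yields a clean separation into the three quantities on the right-hand side of \eqref{2.16}.
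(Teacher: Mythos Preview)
The paper does not supply its own proof of this lemma; it is quoted from \cite{WDCDSB} and used as a black box, so there is no in-paper argument to compare against. That said, two remarks on your proposal are worth making.

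First, the statement as printed almost certainly carries a typo: the normal derivative should be $\tfrac{\partial|\nabla\psi|^2}{\partial\nu}$, not $\tfrac{\partial|\nabla\psi|}{\partial\nu}$. You can see this from how the lemma is actually invoked in the paper---equation~(2.23) applies it with $\tfrac{\partial|\nabla v_\varepsilon|^2}{\partial\nu}$, and the boundary term in (2.31) arises from integrating $\Delta|\nabla v_\varepsilon|^2$ by parts, which again produces $\tfrac{\partial|\nabla v_\varepsilon|^2}{\partial\nu}$. More to the point, the inequality as literally written cannot be proved by your method on non-convex domains: after your curvature step the boundary integrand is controlled by $\psi^{-q+1}|\nabla\psi|^{q-1}$, which is homogeneous of degree \emph{zero} under $\psi\mapsto\lambda\psi$, whereas every term on the right of \eqref{2.16} is homogeneous of degree one. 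Sending $\lambda\to 0$ kills the right-hand side but not your boundary bound, so no combination of integration by parts and Young's inequality---both of which respect this homogeneity---can close the estimate. This is exactly the obstruction you flagged as ``algebraic bookkeeping'', but it is structural rather than cosmetic: your auxiliary identity converts term (I) into an expression of the same scaling degree as (II), and neither of them interpolates among the three target quantities.

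Second, once the typo is corrected your scheme works, and the auxiliary integration by parts becomes unnecessary. The curvature bound now reads $\tfrac{\partial|\nabla\psi|^2}{\partial\nu}\leq 2\kappa_\Omega|\nabla\psi|^2$, so the boundary integral is dominated by $\int_{\partial\Omega}\psi^{-q+1}|\nabla\psi|^{q}$. Pushing this into the interior via your vector field $X$ produces volume integrals with integrands bounded by $\psi^{-q}|\nabla\psi|^{q+1}$, $\psi^{-q+1}|\nabla\psi|^{q-1}|D^2\psi|$, and $\psi^{-q+1}|\nabla\psi|^{q}$; each of these \emph{does} lie in the convex hull of the exponent triples $(-q+1,q-2,2)$, $(-q-1,q+2,0)$, $(1,0,0)$, so a direct three-factor Young inequality finishes the job.
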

By analyzing the temporal evolution of
 $$\|v_0\|_{L^{\infty}(\Omega)}^2\int_{\Omega} \frac{|\nabla v_{\varepsilon}|^2}{v_{\varepsilon}}-\int_{\Omega}\ln{u_{\varepsilon}},$$
 we establish spatio-temporal integral bounds for both $v_{\varepsilon}$ and
$\frac{u_{\varepsilon}}{v_{\varepsilon}}|\nabla  v_{\varepsilon }|^2$,  thanks to  the
   estimates \eqref{2.13} and \eqref{2.16}.
\begin{lemma}\label{lemma2.5}
Assume $\alpha\in\left(\frac{3}{2},2\right]$ and $K>0$ such
that \eqref{1.6} holds. Then  there exists $C>0$ independent of $T_{\max, \varepsilon}$  such that
\begin{align}\label{2.18}
\int_0^{T_{\max, \varepsilon}}\int_{\Omega}v_{\varepsilon}\leq C
\end{align}
and
\begin{align}\label{2.19}
\int_0^{T_{\max, \varepsilon}}\int_{\Omega} \frac{u_{\varepsilon}}{v_{\varepsilon}}|\nabla  v_{\varepsilon }|^2 \leq C
\end{align}
as well as
\begin{align}\label{2.19-1}
\int_0^{T_{\max, \varepsilon}}\int_{\Omega} \frac{v_{\varepsilon}}{u_{\varepsilon}}|\nabla  u_{\varepsilon }|^2 \leq C\qquad and \qquad\int_0^{T_{\max, \varepsilon}}\int_{\Omega} \frac{|\nabla  v_{\varepsilon }|^4}{v_{\varepsilon}^3} \leq C.
\end{align}
\end{lemma}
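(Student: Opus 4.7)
I plan to derive all four estimates simultaneously from the single energy functional
\[
F_\varepsilon(t) := \|v_0\|_{L^{\infty}(\Omega)}^{2}\int_{\Omega}\frac{|\nabla v_\varepsilon|^2}{v_\varepsilon}-\int_{\Omega}\ln u_\varepsilon,
\]
which is bounded above at $t=0$ by \eqref{1.6} and below uniformly in $t,\varepsilon$ by nonnegativity of the first integral together with Jensen's inequality applied with the lower bound in \eqref{2.9}. Writing $M=\|v_0\|_{L^{\infty}(\Omega)}$, the goal is to establish a differential inequality of the form $F_\varepsilon'(t)+D_\varepsilon(t)\le R_\varepsilon(t)$ in which the dissipation $D_\varepsilon$ contains the four quantities to be bounded (plus a multiple of $\int v_\varepsilon|D^2\ln v_\varepsilon|^2$), and $R_\varepsilon$ is the sum of two cross terms and one surface integral controllable via Lemmas \ref{lemma2.2}--\ref{lemma2.4}.

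First I would derive two evolution identities. Differentiating $\int|\nabla v_\varepsilon|^2/v_\varepsilon$ and substituting $v_{\varepsilon,t}=\Delta v_\varepsilon-u_\varepsilon v_\varepsilon$, two integrations by parts exploiting $\partial_\nu v_\varepsilon=0$ together with the pointwise decomposition $v|D^2\ln v|^2=\frac{|D^2 v|^2}{v}-\frac{2\nabla v\cdot D^2 v\cdot\nabla v}{v^2}+\frac{|\nabla v|^4}{v^3}$ give
\[
\frac{d}{dt}\int_\Omega\frac{|\nabla v_\varepsilon|^2}{v_\varepsilon}+2\int_\Omega v_\varepsilon|D^2\ln v_\varepsilon|^2+\int_\Omega\frac{u_\varepsilon|\nabla v_\varepsilon|^2}{v_\varepsilon}=\int_{\partial\Omega}\frac{1}{v_\varepsilon}\frac{\partial|\nabla v_\varepsilon|^2}{\partial\nu}-2\int_\Omega\nabla u_\varepsilon\cdot\nabla v_\varepsilon,
\]
while testing the $u_\varepsilon$-equation against $u_\varepsilon^{-1}$ yields
\[
-\frac{d}{dt}\int_\Omega\ln u_\varepsilon+\int_\Omega\frac{v_\varepsilon|\nabla u_\varepsilon|^2}{u_\varepsilon}+\ell\int_\Omega v_\varepsilon=\chi\int_\Omega u_\varepsilon^{\alpha-2}v_\varepsilon\nabla u_\varepsilon\cdot\nabla v_\varepsilon.
\]
Multiplying the first by $M^{2}$ and adding to the second produces the desired skeleton.

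The cross terms are absorbed via weighted Young's inequalities. For the chemotactic term, the split $(u_\varepsilon^{-1/2}\sqrt{v_\varepsilon}|\nabla u_\varepsilon|)\cdot(u_\varepsilon^{\alpha-3/2}\sqrt{v_\varepsilon}|\nabla v_\varepsilon|)$ gives, for small $\lambda>0$,
\[
\chi u_\varepsilon^{\alpha-2}v_\varepsilon|\nabla u_\varepsilon\cdot\nabla v_\varepsilon|\le\tfrac{\chi\lambda}{2}\frac{v_\varepsilon|\nabla u_\varepsilon|^2}{u_\varepsilon}+\tfrac{\chi}{2\lambda}u_\varepsilon^{2\alpha-3}v_\varepsilon|\nabla v_\varepsilon|^2,
\]
where the first summand is absorbed into the $\int v_\varepsilon|\nabla u_\varepsilon|^2/u_\varepsilon$ dissipation, and the second, using $u_\varepsilon^{2\alpha-3}\le 1+u_\varepsilon$ (valid since $2\alpha-3\in(0,1]$), is dominated by $\int v_\varepsilon|\nabla v_\varepsilon|^2+\|v_0\|_{L^\infty}\int u_\varepsilon|\nabla v_\varepsilon|^2$, both uniformly $L^1$ in $(0,T_{\max,\varepsilon})\times\Omega$ by Lemma \ref{lemma2.2} and the proof of Lemma \ref{lemma2.3}. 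For the cross term $-2M^{2}\int\nabla u_\varepsilon\cdot\nabla v_\varepsilon$, the analogous Young split $(u_\varepsilon^{(1-\alpha)/2}\sqrt{v_\varepsilon}|\nabla u_\varepsilon|)\cdot(u_\varepsilon^{(\alpha-1)/2}v_\varepsilon^{-1/2}|\nabla v_\varepsilon|)$ followed by $u_\varepsilon^{\alpha-1}\le 1+u_\varepsilon$ and the elementary inequality $|\nabla v|^2/v\le\eta|\nabla v|^4/v^3+\eta^{-1}v$ converts it into a combination of $\int u_\varepsilon^{1-\alpha}v_\varepsilon|\nabla u_\varepsilon|^2$ (bounded by Lemma \ref{lemma2.3}), a fraction of $M^{2}\int u_\varepsilon|\nabla v_\varepsilon|^2/v_\varepsilon$ (reabsorbable into the dissipation), and multiples of $\int|\nabla v_\varepsilon|^4/v_\varepsilon^3$ and $\int v_\varepsilon$. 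The precise prefactor $M^{2}$ is chosen exactly to provide the margin needed for these absorptions.

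The main obstacle is the surface integral $M^{2}\int_{\partial\Omega}v_\varepsilon^{-1}\partial_\nu|\nabla v_\varepsilon|^2$, which carries no sign when $\Omega$ is not convex. I would invoke Lemma \ref{lemma2.4} with $q=2$ (after writing $\partial_\nu|\nabla v|^2=2|\nabla v|\partial_\nu|\nabla v|$ and using $v_\varepsilon\le M$ to match the exponents appearing there) to bound this surface integral by
\[
\eta\int_\Omega\frac{|D^2 v_\varepsilon|^2}{v_\varepsilon}+\eta\int_\Omega\frac{|\nabla v_\varepsilon|^4}{v_\varepsilon^3}+C(\eta)\int_\Omega v_\varepsilon
\]
for arbitrary $\eta>0$. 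Recalling the decomposition of $v|D^2\ln v|^2$, the two interior terms combine with $2M^{2}\int v_\varepsilon|D^2\ln v_\varepsilon|^2$ to yield (for $\eta$ small) a nonnegative combination dominating both $\int|\nabla v_\varepsilon|^4/v_\varepsilon^{3}$ and $\int v_\varepsilon|D^2\ln v_\varepsilon|^2$, while $C(\eta)\int v_\varepsilon$ is swallowed by $\ell\int v_\varepsilon$. Integrating the resulting inequality over $(0,t)$ and exploiting the bounds on $F_\varepsilon(0)$ and $F_\varepsilon(t)$ yields simultaneously \eqref{2.18}, \eqref{2.19}, and \eqref{2.19-1}.
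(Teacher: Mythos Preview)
Your approach is essentially the paper's own argument: combine the evolution of $\int|\nabla v_\varepsilon|^2/v_\varepsilon$ (producing the dissipation $\int v_\varepsilon|D^2\ln v_\varepsilon|^2$ and $\int u_\varepsilon|\nabla v_\varepsilon|^2/v_\varepsilon$) with that of $-\int\ln u_\varepsilon$ (producing $\int v_\varepsilon|\nabla u_\varepsilon|^2/u_\varepsilon$ and $\ell\int v_\varepsilon$), treat the boundary integral via Lemma~\ref{lemma2.4} with $q=2$, and absorb the cross terms through Young's inequality and Lemma~\ref{lemma2.3}. Your alternative handling of the chemotactic term---using the space--time bound on $\int u_\varepsilon|\nabla v_\varepsilon|^2$ that drops out of the proof of Lemma~\ref{lemma2.3} rather than reabsorbing into the dissipation---is a legitimate variant.

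There is, however, one concrete gap. You fix the coupling constant in $F_\varepsilon$ to be $M^2=\|v_0\|_{L^\infty}^2$ and then claim that ``$C(\eta)\int_\Omega v_\varepsilon$ is swallowed by $\ell\int_\Omega v_\varepsilon$''. But after multiplying the $|\nabla v_\varepsilon|^2/v_\varepsilon$--evolution by $M^2$, the boundary estimate contributes $M^2C(\eta)\int_\Omega v_\varepsilon$ (and your split $|\nabla v_\varepsilon|^2/v_\varepsilon\le\eta'|\nabla v_\varepsilon|^4/v_\varepsilon^3+(\eta')^{-1}v_\varepsilon$ adds further multiples). Since $\eta$ must already be small enough to be absorbed into $2M^2\int v_\varepsilon|D^2\ln v_\varepsilon|^2$, the constant $C(\eta)$ is then fixed and possibly large; there is no reason why $M^2C(\eta)<\ell$ in general, so you cannot retain a positive fraction of $\ell\int_\Omega v_\varepsilon$ on the dissipation side, and \eqref{2.18} is lost. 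The paper resolves this by taking the coupling constant to be a free parameter $a>0$ (in fact $a=\ell/(2c_2)$ with $c_2$ determined by $C(\eta)$), chosen \emph{after} fixing $\eta$, precisely so that the total $\int_\Omega v_\varepsilon$--contribution on the right is $\frac{\ell}{2}\int_\Omega v_\varepsilon$. Replace $M^2$ by such an adjustable $a$ and your argument goes through; the specific value $\|v_0\|_{L^\infty}^2$ plays no role.
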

\begin{proof} According  to the second equation in \eqref{2.5}, several integrations by parts show that
for all $t \in\left(0, T_{\max, \varepsilon}\right)$,
\begin{align*}
\frac{1}{2} \frac{d}{dt} \int_{\Omega} \frac{\left|\nabla v_{\varepsilon}\right|^2}{v_{\varepsilon}}
= & \int_{\Omega} \frac{1}{v_{\varepsilon}} \nabla v_{\varepsilon} \cdot \nabla\left\{\Delta v_{\varepsilon}
    -u_{\varepsilon} v_{\varepsilon}\right\}
    -\frac{1}{2} \int_{\Omega} \frac{1}{v_{\varepsilon}^2}\left|\nabla v_{\varepsilon}\right|^2 \cdot\left\{\Delta
    v_{\varepsilon}-u_{\varepsilon} v_{\varepsilon}\right\} \nonumber\\
= & \int_{\Omega} \frac{1}{v_{\varepsilon}} \nabla v_{\varepsilon} \cdot \nabla\Delta v_{\varepsilon}
    -\int_{\Omega} \frac{u_{\varepsilon}}{v_{\varepsilon}}\left|\nabla v_{\varepsilon}\right|^2
    -\int_{\Omega} \nabla u_{\varepsilon} \cdot \nabla v_{\varepsilon}\nonumber\\
  & -\frac{1}{2} \int_{\Omega} \frac{1}{v_{\varepsilon}^2}\left|\nabla v_{\varepsilon}\right|^2 \cdot\Delta v_{\varepsilon}
    +\frac{1}{2} \int_{\Omega} \frac{u_{\varepsilon}}{v_{\varepsilon}}\left|\nabla v_{\varepsilon}\right|^2\nonumber\\
= & \int_{\Omega} \frac{1}{v_{\varepsilon}} \nabla v_{\varepsilon} \cdot \nabla\Delta v_{\varepsilon}
    -\frac{1}{2} \int_{\Omega} \frac{1}{v_{\varepsilon}^2}\left |\nabla v_{\varepsilon}\right|^2 \cdot\Delta v_{\varepsilon}-\frac{1}{2}
    \int_{\Omega} \frac{u_{\varepsilon}}{v_{\varepsilon}}\left|\nabla v_{\varepsilon}\right|^2
    -\int_{\Omega} \nabla u_{\varepsilon} \cdot \nabla v_{\varepsilon}\nonumber\\
 = & \frac{1}{2}\int_{\Omega} \frac{1}{v_{\varepsilon}}\Delta |\nabla v_{\varepsilon}|^2-\int_{\Omega} \frac{1}{v_{\varepsilon}}|D^2
 v_{\varepsilon}|^2+ \frac{1}{2}\int_{\Omega} \nabla ( \frac{|\nabla v_{\varepsilon}|^2}{v_{\varepsilon}^2} ) \cdot\nabla
 v_{\varepsilon}\nonumber\\
 &-\frac{1}{2} \int_{\Omega} \frac{u_{\varepsilon}}{v_{\varepsilon}}\left|\nabla v_{\varepsilon}\right|^2
    -\int_{\Omega} \nabla u_{\varepsilon} \cdot \nabla v_{\varepsilon}\nonumber\\
 = &\frac{1}{2} \int_{\partial \Omega} \frac{1}{v_{\varepsilon}} \frac{\partial|\nabla v_{\varepsilon}|^2}{\partial
 \nu}+\int_{\Omega}\frac{1}{v_{\varepsilon}^2} \nabla v_{\varepsilon}\cdot \nabla |\nabla v_{\varepsilon}|^2 -\int_{\Omega}
 \frac{1}{v_{\varepsilon}}|D^2 v_{\varepsilon}|^2-\int_{\Omega} \frac{|\nabla v_{\varepsilon}|^4}{ v_{\varepsilon}^3}\nonumber\\
 &-\frac{1}{2} \int_{\Omega} \frac{u_{\varepsilon}}{v_{\varepsilon}}\left|\nabla v_{\varepsilon}\right|^2
    -\int_{\Omega} \nabla u_{\varepsilon} \cdot \nabla v_{\varepsilon},
\end{align*}
 where  $\nabla v_{\varepsilon} \cdot \nabla \Delta
v_{\varepsilon}=\frac{1}{2}\Delta|\nabla v_{\varepsilon}|^2-|D^2 v_{\varepsilon}|^2$ is used. Thanks to the identity (\cite[Lemma 3.2]{WDCDSB})
 \begin{align}\label{2.20}
    |D^2 v_{\varepsilon}|^2=v_{\varepsilon}^2|D^2 \ln v_{\varepsilon}|^2+\frac{1}{v_{\varepsilon}}\nabla v_{\varepsilon}\cdot\nabla|\nabla
    v_{\varepsilon}|^2-\frac{1}{v_{\varepsilon}^2}|\nabla v_{\varepsilon}|^4,
    \end{align}
we then  get
\begin{align}\label{2.21}
\frac{1}{2} \frac{d}{dt}\int_{\Omega} \frac{\left|\nabla v_{\varepsilon}\right|^2}{v_{\varepsilon}}
  +\int_{\Omega} v_{\varepsilon}\left|D^2 \ln v_{\varepsilon}\right|^2
    +\frac{1}{2} \int_{\Omega} \frac{u_{\varepsilon}}{v_{\varepsilon}}\left|\nabla v_{\varepsilon}\right|^2
   =  \frac{1}{2} \int_{\partial \Omega} \frac{1}{v_{\varepsilon}} \frac{\partial|\nabla v_{\varepsilon}|^2}{\partial \nu}
    -\int_{\Omega} \nabla u_{\varepsilon} \cdot \nabla v_{\varepsilon}
\end{align}
for all $t \in\left(0, T_{\max, \varepsilon}\right)$. From Lemma 3.4 in \cite{WDCDSB}, it follows that there
exists  $c_1>0$ such that
\begin{equation}\label{2.22}
\int_{\Omega} v_{\varepsilon}\left|D^2 \ln v_{\varepsilon}\right|^2
\geq c_1 \int_{\Omega} \frac{\left|D^2 v_{\varepsilon}\right|^2}{v_{\varepsilon}}
  +c_1 \int_{\Omega} \frac{\left|\nabla v_{\varepsilon}\right|^4}{v_{\varepsilon}^3}.
\end{equation}
As an application of Lemma \ref{lemma2.4} with $\eta:=c_1, q:=2$, we arrive at
\begin{align}\label{2.23}
 \int_{\partial \Omega} \frac{1}{v_{\varepsilon}} \cdot \frac{\partial\left|\nabla v_{\varepsilon}\right|^2}{\partial \nu}
\leq c_1
\int_{\Omega} \frac{\left|D^2 v_{\varepsilon}\right|^2}{v_{\varepsilon}}
  +c_1 \int_{\Omega} \frac{\left|\nabla v_{\varepsilon}\right|^4}{v_{\varepsilon}^3}
  +C(c_1,2) \int_{\Omega} v_{\varepsilon}.
\end{align}
Apart from that, by Young's inequality, we infer that for all $t
\in\left(0, T_{\max, \varepsilon}\right)$
\begin{align}\label{2.24}
-\int_{\Omega} \nabla u_{\varepsilon} \cdot \nabla v_{\varepsilon}
&\leq \int_{\Omega}u_{\varepsilon}^{1-\alpha}v_{\varepsilon}|\nabla u_{\varepsilon }|^2
+\frac{1}{4}\int_{\Omega} \frac{u_{\varepsilon}^{\alpha-1}}{v_{\varepsilon}} |\nabla v_{\varepsilon }|^2\nonumber\\
&\leq \int_{\Omega}u_{\varepsilon}^{1-\alpha}v_{\varepsilon}|\nabla u_{\varepsilon }|^2
+\frac{1}{4}\int_{\Omega} \left(u_{\varepsilon}+1\right)\frac{|\nabla v_{\varepsilon }|^2}{v_{\varepsilon}}.\nonumber\\
&\leq  \int_{\Omega}u_{\varepsilon}^{1-\alpha}v_{\varepsilon}|\nabla u_{\varepsilon }|^2
+\frac{1}{4}\int_{\Omega} \frac{u_{\varepsilon}}{v_{\varepsilon}}|\nabla v_{\varepsilon }|^2+\frac{c_1}{4}\int_{\Omega} \frac{|\nabla v_{\varepsilon }|^4}{v_{\varepsilon}^3}+ \frac{1}{2c_1} \int_{\Omega} v_{\varepsilon}.
\end{align}
Summing up \eqref{2.21}--\eqref{2.24}, we conclude that
\begin{align}\label{2.25}
\frac{d}{dt}\int_{\Omega} \frac{|\nabla v_{\varepsilon}|^2}{v_{\varepsilon}}
+\frac{1}{2} \int_{\Omega} \frac{u_{\varepsilon}}{v_{\varepsilon}} |\nabla v_{\varepsilon }|^2+\frac{c_1}{2}\int_{\Omega} \frac{|\nabla v_{\varepsilon }|^4}{v_{\varepsilon}^3}
\leq  2 \int_{\Omega}u_{\varepsilon}^{1-\alpha}v_{\varepsilon}|\nabla u_{\varepsilon }|^2
+c_2 \int_{\Omega} v_{\varepsilon}
\end{align}
with $ c_2 :=C(c_1,2)+\frac{1}{c_1}$ for all $t \in(0, T_{\max, \varepsilon})$. On the other hand, multiplying the first equation in \eqref{2.5} by $-\frac{1}{u_{\varepsilon}}$ and using \eqref{2.8} and Young's inequality, we have
\begin{align}\label{2.26}
&-\frac{d}{dt}\int_{\Omega}\ln{u_{\varepsilon}}+\ell \int_{\Omega} v_{\varepsilon}+\int_{\Omega} \frac{v_{\varepsilon}}{u_{\varepsilon}} |\nabla u_{\varepsilon }|^2\nonumber\\
&=\chi \int_{\Omega}u_{\varepsilon}^{\alpha-2}v_{\varepsilon}\nabla u_{\varepsilon}\cdot \nabla v_{\varepsilon}\nonumber\\
&\leq \frac{1}{2}\int_{\Omega} \frac{v_{\varepsilon}}{u_{\varepsilon}} |\nabla u_{\varepsilon }|^2 +\frac{\chi^2}{2}\int_{\Omega} u_{\varepsilon}^{2\alpha-3}v_{\varepsilon} |\nabla v_{\varepsilon }|^2\nonumber\\
 &\leq\frac{1}{2}\int_{\Omega} \frac{v_{\varepsilon}}{u_{\varepsilon}} |\nabla u_{\varepsilon }|^2 +\frac{\chi^2}{2}\int_{\{u_{\varepsilon}\geq 1\}} u_{\varepsilon}^{2\alpha-3}v_{\varepsilon} |\nabla v_{\varepsilon }|^2+\frac{\chi^2}{2}\int_{\{u_{\varepsilon}<1\}} v_{\varepsilon} |\nabla v_{\varepsilon }|^2\nonumber\\
 &\leq\frac{1}{2}\int_{\Omega} \frac{v_{\varepsilon}}{u_{\varepsilon}} |\nabla u_{\varepsilon }|^2 +\frac{\chi^2\eta}{2}\int_{\Omega} u_{\varepsilon}v_{\varepsilon} |\nabla v_{\varepsilon }|^2+\frac{\chi^2}{2}(\eta^{-1})^{\frac{2\alpha-3}{4-2\alpha}}\int_{\Omega} v_{\varepsilon} |\nabla v_{\varepsilon }|^2+\frac{\chi^2}{2}\int_{\{u_{\varepsilon}<1\}} v_{\varepsilon} |\nabla v_{\varepsilon }|^2\nonumber\\
 &\leq\frac{1}{2}\int_{\Omega} \frac{v_{\varepsilon}}{u_{\varepsilon}} |\nabla u_{\varepsilon }|^2 +\frac{\chi^2\eta}{2}\|v_0\|_{L^{\infty}(\Omega)}^2\int_{\Omega} \frac {u_{\varepsilon}}{v_{\varepsilon}} |\nabla v_{\varepsilon }|^2+\frac{\chi^2}{2}(\eta^{\frac{3-2\alpha}{4-2\alpha}}+1)\int_{\Omega} v_{\varepsilon} |\nabla v_{\varepsilon }|^2
\end{align}
for all $t \in(0, T_{\max, \varepsilon})$, due to $\alpha\in\left(\frac{3}{2},2\right]$. Letting
$$
\eta:=\frac{\ell}{4\chi^2c_2\|v_0\|_{L^{\infty}(\Omega)}^2} \quad \text{and} \quad C(\alpha):=\frac{\chi^2}{2}(\eta^{\frac{3-2\alpha}{4-2\alpha}}+1).
$$
Then  combining \eqref{2.25} with  \eqref{2.26} yields
\begin{align*}
    &\frac{d}{dt}\left\{\frac{\ell}{2c_2}\int_{\Omega} \frac{|\nabla v_{\varepsilon}|^2}{v_{\varepsilon}}-\int_{\Omega}\ln{u_{\varepsilon}} \right\}+\frac{\ell}{4c_2} \int_{\Omega} \frac{u_{\varepsilon}}{v_{\varepsilon}} |\nabla v_{\varepsilon }|^2+\frac{\ell c_1}{4c_2}\int_{\Omega} \frac{|\nabla v_{\varepsilon }|^4}{v_{\varepsilon}^3}+\frac{\ell}{2}\int_{\Omega} v_{\varepsilon}+\frac{1}{2}\int_{\Omega} \frac{v_{\varepsilon}}{u_{\varepsilon}} |\nabla u_{\varepsilon }|^2\\
    &\leq \frac{\chi^2\eta}{2}\|v_0\|_{L^{\infty}(\Omega)}^2\int_{\Omega} \frac {u_{\varepsilon}}{v_{\varepsilon}} |\nabla v_{\varepsilon }|^2 +\frac{\ell}{c_2} \int_{\Omega}u_{\varepsilon}^{1-\alpha}v_{\varepsilon}|\nabla u_{\varepsilon }|^2+C(\alpha)\int_{\Omega} v_{\varepsilon} |\nabla v_{\varepsilon }|^2\\
    &= \frac{\ell}{8c_2} \int_{\Omega} \frac{u_{\varepsilon}}{v_{\varepsilon}} |\nabla v_{\varepsilon }|^2 +\frac{\ell}{c_2} \int_{\Omega}u_{\varepsilon}^{1-\alpha}v_{\varepsilon}|\nabla u_{\varepsilon }|^2+C(\alpha)\int_{\Omega} v_{\varepsilon} |\nabla v_{\varepsilon }|^2
\end{align*}
for all $t \in(0, T_{\max, \varepsilon})$. On integration in time, according to $\ln{\xi}\leq \xi$ for all $\xi>0$ entails that
\begin{align}\label{2.27}
   &\frac{\ell}{2c_2}\int_{\Omega} \frac{|\nabla v_{\varepsilon}|^2}{v_{\varepsilon}}+\frac{\ell}{8c_2} \int_0^t\int_{\Omega} \frac{u_{\varepsilon}}{v_{\varepsilon}} |\nabla v_{\varepsilon }|^2+\frac{\ell c_1}{4c_2}\int_0^t\int_{\Omega} \frac{|\nabla v_{\varepsilon }|^4}{v_{\varepsilon}^3}+\frac{\ell}{2}\int_0^t\int_{\Omega} v_{\varepsilon}+\frac{1}{2}\int_0^t\int_{\Omega} \frac{v_{\varepsilon}}{u_{\varepsilon}} |\nabla u_{\varepsilon }|^2\nonumber\\
    &\leq -\int_{\Omega}\ln{u_{0\varepsilon}}+\int_{\Omega}\ln{u_{\varepsilon}}+\frac{\ell}{2c_2}\int_{\Omega} \frac{|\nabla v_{0\varepsilon}|^2}{v_{0\varepsilon}}+ \frac{\ell}{c_2} \int_0^t\int_{\Omega}u_{\varepsilon}^{1-\alpha}v_{\varepsilon}|\nabla u_{\varepsilon }|^2+C(\alpha)\int_0^t\int_{\Omega} v_{\varepsilon} |\nabla v_{\varepsilon }|^2\nonumber\\
    &\leq -\int_{\Omega}\ln{u_0}+\int_{\Omega}u_{\varepsilon}+\frac{\ell}{2c_2}\int_{\Omega} \frac{|\nabla v_{0\varepsilon}|^2}{v_{0\varepsilon}}+ \frac{\ell}{c_2} \int_0^t\int_{\Omega}u_{\varepsilon}^{1-\alpha}v_{\varepsilon}|\nabla u_{\varepsilon }|^2+C(\alpha)\int_0^t\int_{\Omega} v_{\varepsilon} |\nabla v_{\varepsilon }|^2
\end{align}
for all $t \in(0, T_{\max, \varepsilon})$. Therefore,
 \eqref{2.27}  implies \eqref{2.18}-\eqref{2.19-1} in  view of \eqref{1.6}, \eqref{2.9}, \eqref{2.11} and \eqref{2.13}.
\end{proof}

With the help of the following class of interpolation inequalities, the results derived from \eqref{2.13} and \eqref{2.19} can be used to estimate $\int_0^{T_{\max}}\int_\Omega u_{\varepsilon}^{\frac{5}{3}}v_{\varepsilon}$ in the next section.
\begin{lemma}\label{lemma2.6}(\cite{WCVPDE})
Let $p^*\geq1$ and $p:=\frac{2p^*+3}{3}$. Then for all $M>0$ one can find $C(M,p^*)>0$ such that for any $q\in[0,\frac{2p^*}{3}]$ and   functions $\phi,\psi\in C^1(\overline{\Omega})$ satisfying $\phi>0,\psi>0$ in $\overline{\Omega}$ as well as
\begin{align*}
    \int_{\Omega}\phi^{p^*}\leq M,
\end{align*}
we have
\begin{align}\label{eq2.28}
    \int_{\Omega}\phi^{p}\psi\leq C(M,p^*) \left\{\int_{\Omega}\phi^{q-1}\psi|\nabla \phi|^2+\int_{\Omega}\frac{\phi}{\psi}|\nabla
    \psi|^2+\int_{\Omega}\phi\psi\right\}.
\end{align}
\end{lemma}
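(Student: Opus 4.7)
The plan is to prove \eqref{eq2.28} by a Gagliardo--Nirenberg-type interpolation on a composite test function built from $\phi$ and $\psi$, combined with a Hölder step against the $L^{p^*}$ hypothesis. The exponent $p=(2p^*+3)/3$ is chosen precisely so that the three-dimensional Sobolev exponent $6$ and the given $L^{p^*}$ bound interact as conjugate Hölder partners: the identity $p-1=2p^*/3$ makes $\tfrac{3}{2}$ and $3$ conjugate, with $(p-1)\cdot\tfrac{3}{2}=p^*$. For the base case $q=0$, I would set $w:=(\phi\psi)^{1/2}$, so that $w^2=\phi\psi$ and the product rule plus Young's inequality yield $|\nabla w|^2\le\tfrac12(\phi^{-1}\psi|\nabla\phi|^2+\phi|\nabla\psi|^2/\psi)$. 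The Sobolev embedding $W^{1,2}(\Omega)\hookrightarrow L^6(\Omega)$ in $\mathbb R^3$ then gives $(\int(\phi\psi)^3)^{1/3}\le CE_0$, where $E_0$ denotes the right-hand side of \eqref{eq2.28} at $q=0$, and a final Hölder step produces
$$\int_\Omega\phi^p\psi=\int_\Omega\phi^{p-1}\cdot(\phi\psi)\le M^{2/3}\Bigl(\int_\Omega(\phi\psi)^3\Bigr)^{1/3}\le CM^{2/3}E_0,$$
which is \eqref{eq2.28} at $q=0$.

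For general $q\in(0,\tfrac{2p^*}{3}]$ I would use the weighted analogue $\widetilde w:=\phi^{(q+1)/2}\psi^{1/2}$, for which $\widetilde w^2=\phi^{q+1}\psi$ and $|\nabla\widetilde w|^2\le\tfrac{(q+1)^2}{2}\phi^{q-1}\psi|\nabla\phi|^2+\tfrac{1}{2}\phi^{q+1}|\nabla\psi|^2/\psi$. Applying the 3D Sobolev embedding to $\widetilde w$ controls $\bigl(\int(\phi^{q+1}\psi)^3\bigr)^{1/3}$, and the decomposition $\phi^p\psi=\phi^{p-q-1}\cdot\phi^{q+1}\psi$ (well-defined because $p-q-1=\tfrac{2p^*}{3}-q\ge 0$) followed by Hölder with $(p-q-1)r'=p^*$ yields $\int\phi^p\psi\le M^{1/r'}\bigl(\int(\phi^{q+1}\psi)^r\bigr)^{1/r}$ for some $r\in[1,3]$, which is interpolated between $L^1$ and $L^3$ and inserted back into the Sobolev bound.

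The main obstacle is that the Sobolev step produces two \emph{wrong-power} quantities on the right-hand side, $\int\phi^{q+1}|\nabla\psi|^2/\psi$ and $\int\phi^{q+1}\psi$, whereas \eqref{eq2.28} only permits the linear-power analogues $\int\phi|\nabla\psi|^2/\psi$ and $\int\phi\psi$. I would circumvent this by splitting $\Omega=\{\phi\le 1\}\cup\{\phi>1\}$: on $\{\phi\le 1\}$ the pointwise inequality $\phi^{q+1}\le\phi$ (from $q\ge 0$) immediately reduces the integrands to the allowed form; on $\{\phi>1\}$, since $q\le\tfrac{2p^*}{3}<p^*$, a Hölder step against the hypothesis $\int\phi^{p^*}\le M$, combined with iteration/absorption of any reappearing $\int\phi^p\psi$ on the left (via a small Young constant), closes the estimate. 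The final constant $C=C(M,p^*)$ is uniform in $q$ because all Hölder and Young exponents involved stay bounded on the compact interval $[0,\tfrac{2p^*}{3}]$.
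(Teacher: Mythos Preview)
The paper does not prove this lemma at all; it is quoted from \cite{WCVPDE} and used as a black box. So there is nothing to compare your argument against. On its own merits, however, your outline has a genuine gap.

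Your $q=0$ case is correct, and for $q>0$ the choice $\widetilde w=\phi^{(q+1)/2}\psi^{1/2}$ does produce the correct dissipation term $\int_\Omega\phi^{q-1}\psi|\nabla\phi|^2$. The problem is the cross term $\int_\Omega\phi^{q+1}\psi^{-1}|\nabla\psi|^2$ on the set $\{\phi>1\}$. Neither of the two mechanisms you invoke disposes of it: a H\"older split of the integrand against $\int_\Omega\phi^{p^*}$ necessarily leaves a factor of the form $\bigl(\int_\Omega(\phi\psi^{-1}|\nabla\psi|^2)^s\bigr)^{1/s}$ with $s>1$, which none of the three admissible quantities in \eqref{eq2.28} controls; and absorption into $\int_\Omega\phi^p\psi$ is impossible because the $\psi$-dependence is completely different (the integrand carries $\psi^{-1}|\nabla\psi|^2$, not $\psi$). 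Your small-Young-constant device only helps where the residual term is genuinely of the form $\phi^{\,\cdot}\psi$; it does nothing here. A separate issue is that after the $L^1$--$L^3$ interpolation you still need $\int_\Omega\phi^{q+1}\psi$ with a \emph{large} coefficient, which on $\{\phi>1\}$ is bounded by $\int_\Omega\phi^p\psi$ but with a coefficient that cannot be made small.

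A workable repair is to note that the complementary scheme based on $w=(\phi\psi)^{1/2}$ gives the correct $\psi$-gradient term $\int_\Omega\phi\psi^{-1}|\nabla\psi|^2$ but the wrong $\phi$-gradient term $\int_\Omega\phi^{-1}\psi|\nabla\phi|^2$, and that on $\{\phi>1\}$ one has $\phi^{-1}\le\phi^{q-1}$. Thus the two schemes fail on disjoint regions, and a truncation argument (e.g.\ working with $\phi+1$ in place of $\phi$, or introducing a Lipschitz cutoff at level $1$) lets one patch them together. Without such a hybrid step the argument, as written, does not close.
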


Since  the convexity requirement for the domain $\Omega$ is not assumed, we establish a refined results stated in Lemma 2.3 of \cite{WJDE}.
\begin{lemma}\label{lemma2.7}
Let $\Omega \subset \mathbb R^3$ be a bounded domain with smooth boundary and $q\geq 2$. Then for all $t\in (0,T_{max,\epsilon})$, there exist
$\Gamma(q)>0$ and $\gamma(q)>0$ such that
\begin{align}\label{2.29}
    \frac{d}{dt}\int_{\Omega}\frac{|\nabla v_{\epsilon}|^q}{v_{\epsilon}^{q-1}}+\Gamma(q)\int_{\Omega}\frac{|\nabla
    v_{\epsilon}|^{q-2}}{v_{\epsilon}^{q-3}}|D^2\ln{v_{\epsilon}}|^2+\Gamma(q)\int_{\Omega}u_{\epsilon}\frac{|\nabla
    v_{\epsilon}|^q}{v_{\epsilon}^{q-1}}\leq \frac{1}{\Gamma(q)}\int_{\Omega}v_{\epsilon}(u_{\epsilon}^{\frac{q+2}{2}}+1)
\end{align}
and
\begin{align}\label{2.30}
    \frac{d}{dt}\int_{\Omega}\frac{|\nabla v_{\varepsilon}|^q}{v_{\varepsilon}^{q-1}}+\gamma(q)\int_{\Omega}\frac{|\nabla
    v_{\varepsilon}|^{q-2}}{v_{\varepsilon}^{q-3}}|D^2\ln{v_{\varepsilon}}|^2+\gamma(q)\int_{\Omega}u_{\varepsilon}\frac{|\nabla
    v_{\varepsilon}|^q}{v_{\varepsilon}^{q-1}}\leq \frac{1}{\gamma(q)}\int_{\Omega}v_{\varepsilon}(|\nabla u_{\varepsilon}|^{\frac{q+2}{3}}+1).
\end{align}
\end{lemma}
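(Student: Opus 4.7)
The plan is to differentiate $\int_\Omega \frac{|\nabla v_\varepsilon|^q}{v_\varepsilon^{q-1}}$ along the flow of \eqref{2.5}, substitute $v_{\varepsilon t}=\Delta v_\varepsilon-u_\varepsilon v_\varepsilon$, carry out the same sequence of integrations by parts as in the proof of Lemma \ref{lemma2.5} (which handles the case $q=2$), and then invoke the pointwise identity \eqref{2.20} to convert $\frac{|D^2 v_\varepsilon|^2|\nabla v_\varepsilon|^{q-2}}{v_\varepsilon^{q-1}}$ into a positive multiple of $\frac{|\nabla v_\varepsilon|^{q-2}}{v_\varepsilon^{q-3}}|D^2\ln v_\varepsilon|^2+\frac{|\nabla v_\varepsilon|^{q+2}}{v_\varepsilon^{q+1}}$. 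This should yield an identity of the schematic form
$$\frac{d}{dt}\int_\Omega\frac{|\nabla v_\varepsilon|^q}{v_\varepsilon^{q-1}}+c_1(q)\int_\Omega\frac{|\nabla v_\varepsilon|^{q-2}}{v_\varepsilon^{q-3}}|D^2\ln v_\varepsilon|^2+c_1(q)\int_\Omega\frac{|\nabla v_\varepsilon|^{q+2}}{v_\varepsilon^{q+1}}+\int_\Omega u_\varepsilon\frac{|\nabla v_\varepsilon|^q}{v_\varepsilon^{q-1}}\le \mathcal B-q\int_\Omega\frac{|\nabla v_\varepsilon|^{q-2}\nabla v_\varepsilon\cdot\nabla u_\varepsilon}{v_\varepsilon^{q-2}},$$
in which $\mathcal B$ is a boundary integral of the form $\tfrac{q}{2}\int_{\partial\Omega}v_\varepsilon^{-(q-1)}|\nabla v_\varepsilon|^{q-2}\partial_\nu|\nabla v_\varepsilon|^2$ that persists because $\Omega$ need not be convex. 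The boundary term is controlled uniformly by applying Lemma \ref{lemma2.4} with $\psi:=v_\varepsilon$ and a sufficiently small parameter, which returns a fraction of the two dissipation integrals on the left (hence absorbable) together with a tame remainder $C\int_\Omega v_\varepsilon$. The whole argument then reduces to disposing of the cross-gradient term on the right, and it is only at this last step that the two assertions \eqref{2.29} and \eqref{2.30} diverge.

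For \eqref{2.29} I would integrate by parts to shift the derivative off $u_\varepsilon$; since $\partial_\nu v_\varepsilon=0$ no boundary contribution arises, and expanding the divergence yields
$$-q\int_\Omega\frac{|\nabla v_\varepsilon|^{q-2}\nabla v_\varepsilon\cdot\nabla u_\varepsilon}{v_\varepsilon^{q-2}}=q\int_\Omega u_\varepsilon\frac{\nabla\cdot(|\nabla v_\varepsilon|^{q-2}\nabla v_\varepsilon)}{v_\varepsilon^{q-2}}-q(q-2)\int_\Omega u_\varepsilon\frac{|\nabla v_\varepsilon|^q}{v_\varepsilon^{q-1}}.$$
Moving the last integral to the left only enlarges the coefficient of the already-present dissipation $\int_\Omega u_\varepsilon\frac{|\nabla v_\varepsilon|^q}{v_\varepsilon^{q-1}}$ from $1$ up to $(q-1)^2$, while the remaining piece is pointwise bounded by $C(q)\int_\Omega u_\varepsilon\frac{|D^2v_\varepsilon||\nabla v_\varepsilon|^{q-2}}{v_\varepsilon^{q-2}}$. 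A first Young estimate with exponents $(2,2)$ absorbs its $|D^2 v_\varepsilon|$-factor into $c_1(q)\int_\Omega\frac{|D^2v_\varepsilon|^2|\nabla v_\varepsilon|^{q-2}}{v_\varepsilon^{q-1}}$ (which sits inside the $|D^2\ln v_\varepsilon|^2$-dissipation by \eqref{2.20}) and leaves $\int_\Omega u_\varepsilon^2 v_\varepsilon^{-(q-3)}|\nabla v_\varepsilon|^{q-2}$; writing $\frac{|\nabla v_\varepsilon|^{q-2}}{v_\varepsilon^{q-3}}=v_\varepsilon^{2/q}\bigl(\frac{|\nabla v_\varepsilon|^q}{v_\varepsilon^{q-1}}\bigr)^{(q-2)/q}$ and applying a second Young step with exponents $(q/2,q/(q-2))$ then splits this remainder into an absorbable multiple of $\int_\Omega u_\varepsilon\frac{|\nabla v_\varepsilon|^q}{v_\varepsilon^{q-1}}$ plus $C\int_\Omega v_\varepsilon u_\varepsilon^{(q+2)/2}$, which is exactly the right-hand side of \eqref{2.29}.

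For \eqref{2.30} the strategy is instead to bound the cross term in one shot by Young's inequality with conjugate exponents $\tfrac{q+2}{3}$ and $\tfrac{q+2}{q-1}$; a short exponent matching (with scaling $\beta=\tfrac{3}{q+2}$ so that the $v_\varepsilon$-power lines up correctly) produces
$$\Bigl|q\int_\Omega\frac{|\nabla v_\varepsilon|^{q-2}\nabla v_\varepsilon\cdot\nabla u_\varepsilon}{v_\varepsilon^{q-2}}\Bigr|\le \kappa\int_\Omega\frac{|\nabla v_\varepsilon|^{q+2}}{v_\varepsilon^{q+1}}+C(\kappa,q)\int_\Omega v_\varepsilon|\nabla u_\varepsilon|^{(q+2)/3},$$
whose first summand is swallowed for small $\kappa$ by $c_1(q)\int_\Omega\frac{|\nabla v_\varepsilon|^{q+2}}{v_\varepsilon^{q+1}}$ on the left, and whose second is exactly the term appearing on the right of \eqref{2.30} (the inner $+1$ in the bracket conveniently absorbs residual prefactors). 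I expect the principal technical obstacle to lie in the derivation of \eqref{2.29}: the chain of two Young applications must be calibrated so that the exponent of $v_\varepsilon$ in the final remainder is precisely $1$ and the $u_\varepsilon$-exponent is exactly $\tfrac{q+2}{2}$, while simultaneously keeping every spilled gradient piece absorbable into the dissipation; by contrast the derivation of \eqref{2.30} is essentially a single clean Young step once one recognises the conjugate pair $\bigl(\tfrac{q+2}{3},\tfrac{q+2}{q-1}\bigr)$ as the right choice.
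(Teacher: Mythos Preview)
Your proposal is correct and follows essentially the same route as the paper: the same integration-by-parts expansion of $\frac{d}{dt}\int_\Omega|\nabla v_\varepsilon|^q/v_\varepsilon^{q-1}$, the same appeal to Lemma~\ref{lemma2.4} for the boundary term, and the same bifurcation at the cross-gradient integral --- an integration by parts followed by two Young steps for \eqref{2.29}, and a single Young step with conjugate exponents $\tfrac{q+2}{3},\,\tfrac{q+2}{q-1}$ for \eqref{2.30}. The only cosmetic deviation is that in your second Young step for \eqref{2.29} you absorb into $\int_\Omega u_\varepsilon\frac{|\nabla v_\varepsilon|^q}{v_\varepsilon^{q-1}}$ whereas the paper absorbs into $\int_\Omega\frac{|\nabla v_\varepsilon|^{q+2}}{v_\varepsilon^{q+1}}$; both produce the same remainder $C\int_\Omega u_\varepsilon^{(q+2)/2}v_\varepsilon$.
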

\begin{proof}
According to the second equation in \eqref{2.5} and the identity $\nabla v_{\varepsilon} \cdot \nabla \Delta
v_{\varepsilon}=\frac{1}{2}\Delta|\nabla v_{\varepsilon}|^2-|D^2 v_{\varepsilon}|^2$, we can obtain that
    \begin{align}\label{2.31}
        \frac{d}{dt}\int_{\Omega}\frac{|\nabla v_{\varepsilon}|^q}{v_{\varepsilon}^{q-1}}=&q\int_{\Omega}v_{\varepsilon}^{-q+1}|\nabla
        v_{\varepsilon}|^{q-2}\nabla v_{\varepsilon}\cdot\left\{\nabla \Delta v_{\varepsilon}-\nabla (u_{\varepsilon}
        v_{\varepsilon})\right\}-(q-1)\int_{\Omega}v_{\varepsilon}^{-q}|\nabla v_{\varepsilon}|^q \cdot\left\{\Delta
        v_{\varepsilon}-u_{\varepsilon} v_{\varepsilon}\right\}\nonumber
        \\
        =&q\int_{\Omega}v_{\varepsilon}^{-q+1}|\nabla v_{\varepsilon}|^{q-2}\nabla v_{\varepsilon}\cdot\nabla \Delta
        v_{\varepsilon}-q\int_{\Omega}v_{\varepsilon}^{-q+1}|\nabla v_{\varepsilon}|^{q-2}\nabla v_{\varepsilon}\cdot\nabla (u_{\varepsilon}
        v_{\varepsilon})\nonumber\\
        &-(q-1)\int_{\Omega}v_{\varepsilon}^{-q}|\nabla v_{\varepsilon}|^q \cdot\Delta
        v_{\varepsilon}+(q-1)\int_{\Omega}u_{\varepsilon}v_{\epsilon}^{-q+1}|\nabla v_{\varepsilon}|^{q}\nonumber\\
        =&\frac{q}{2}\int_{\Omega}v_{\varepsilon}^{-q+1}|\nabla v_{\varepsilon}|^{q-2}\Delta |\nabla
        v_{\varepsilon}|^2-q\int_{\Omega}v_{\varepsilon}^{-q+1}|\nabla v_{\varepsilon}|^{q-2}|D^2
        v_{\varepsilon}|^2-(q-1)\int_{\Omega}v_{\varepsilon}^{-q}|\nabla v_{\varepsilon}|^{q} \Delta v_{\varepsilon}\nonumber\\
        &-q\int_{\Omega}v_{\varepsilon}^{-q+1}|\nabla v_{\varepsilon}|^{q-2}\nabla v_{\varepsilon}\cdot\nabla (u_{\varepsilon}
        v_{\varepsilon})+(q-1)\int_{\Omega}u_{\varepsilon}v_{\varepsilon}^{-q+1}|\nabla v_{\varepsilon}|^{q}\nonumber\\
        =&\frac{q}{2}\int_{\Omega}v_{\varepsilon}^{-q+1}|\nabla v_{\varepsilon}|^{q-2}\cdot\frac{\partial|\nabla
        v_{\varepsilon}|}{\partial\nu}-\frac{q(q-2)}{4}\int_{\Omega}v_{\varepsilon}^{-q+1}|\nabla v_{\varepsilon}|^{q-4}|\nabla |\nabla
        v_{\varepsilon}|^2|^2\nonumber\\
        &-q\int_{\Omega}v_{\varepsilon}^{-q+1}|\nabla v_{\varepsilon}|^{q-2}|D^2
        v_{\varepsilon}|^2+q(q-1)\int_{\Omega}v_{\varepsilon}^{-q}|\nabla v_{\varepsilon}|^{q-2}\nabla |\nabla v_{\varepsilon}|^2\cdot\nabla
        v_{\varepsilon}\nonumber\\
        &-q(q-1)\int_{\Omega}v_{\varepsilon}^{-q-1}|\nabla v_{\varepsilon}|^{q+2}-q\int_{\Omega}v_{\varepsilon}^{-q+1}|\nabla
        v_{\varepsilon}|^{q-2}\nabla v_{\varepsilon}\cdot\nabla (u_{\varepsilon} v_{\epsilon})\nonumber\\
        &+(q-1)\int_{\Omega}u_{\varepsilon}v_{\varepsilon}^{-q+1}|\nabla v_{\varepsilon}|^{q}
    \end{align}
    for all $t \in(0, T_{\max, \varepsilon})$. Thanks to \eqref{2.16}, one can see that for any $\eta>0$, there exists $c_1=C(\eta,q)>0$  such that
    \begin{align}\label{2.32}
        \int_{\partial\Omega}v_{\varepsilon}^{-q+1}|\nabla v_{\varepsilon}|^{q-2}\cdot\frac{\partial|\nabla v_{\varepsilon}|}{\partial\nu}\leq
        \eta \int_{\Omega}v_{\varepsilon}^{-q+1}|\nabla
        v_{\varepsilon}|^{q-2}|D^2v_{\varepsilon}|^2+\eta\int_{\Omega}v_{\varepsilon}^{-q-1}|\nabla
        v_{\varepsilon}|^{q+2}+c_1\int_{\Omega}v_{\varepsilon}.
    \end{align}
    Furthermore, by two well-known inequalities (Lemma 3.4 of \cite{WDCDSB})
    \begin{align}\label{2.33}
    \int_{\Omega}v_{\varepsilon}^{-q-1}|\nabla v_{\varepsilon}|^{q+2}\leq (q+\sqrt{3})^2\int_{\Omega}v_{\varepsilon}^{-q+3}|\nabla
    v_{\varepsilon}|^{q-2}|D^2 \ln v_{\varepsilon}|^2
    \end{align}
    and
    \begin{align}\label{2.34}
    \int_{\Omega}v_{\varepsilon}^{-q+1}|\nabla v_{\varepsilon}|^{q-2}|D^2 v_{\varepsilon}|^2\leq
    (q+\sqrt{3}+1)^2\int_{\Omega}v_{\varepsilon}^{-q+3}|\nabla v_{\varepsilon}|^{q-2}|D^2 \ln v_{\varepsilon}|^2,
    \end{align}
    we can conclude  that
    \begin{align}\label{2.35}
        &-\frac{q(q-2)}{4}\int_{\Omega}v_{\varepsilon}^{-q+1}|\nabla v_{\varepsilon}|^{q-4}|\nabla |\nabla
        v_{\varepsilon}|^2|^2-q\int_{\Omega}v_{\varepsilon}^{-q+1}|\nabla v_{\varepsilon}|^{q-2}|D^2 v_{\varepsilon}|^2\nonumber\\
        &+q(q-1)\int_{\Omega}v_{\varepsilon}^{-q}|\nabla v_{\varepsilon}|^{q-2}\nabla |\nabla v_{\varepsilon}|^2\cdot\nabla
        v_{\varepsilon}-q(q-1)\int_{\Omega}v_{\varepsilon}^{-q-1}|\nabla v_{\varepsilon}|^{q+2}\nonumber\\
        &=-\frac{q(q-2)}{4}\int_{\Omega}v_{\varepsilon}^{-q+1}|\nabla v_{\varepsilon}|^{q-4}|\nabla |\nabla
        v_{\varepsilon}|^2|^2-q\int_{\Omega}v_{\varepsilon}^{-q+3}|\nabla v_{\varepsilon}|^{q-2}|D^2 \ln v_{\varepsilon}|^2\nonumber\\
        &+q(q-2)\int_{\Omega}v_{\varepsilon}^{-q}|\nabla v_{\varepsilon}|^{q-2}\nabla |\nabla v_{\varepsilon}|^2\cdot\nabla
        v_{\varepsilon}-q(q-2)\int_{\Omega}v_{\varepsilon}^{-q-1}|\nabla v_{\varepsilon}|^{q+2}\nonumber\\
        &\leq -q\int_{\Omega}v_{\varepsilon}^{-q+3}|\nabla v_{\varepsilon}|^{q-2}|D^2 \ln v_{\varepsilon}|^2\nonumber\\
        &-\frac{q(q-2)}{4}\int_{\Omega}v_{\varepsilon}^{-q+1}|\nabla v_{\varepsilon}|^{q-4}| \nabla |\nabla v_{\varepsilon}|^2 -
        \frac{2}{v_{\varepsilon}}|\nabla v_{\varepsilon}|^2\nabla v_{\varepsilon} |^2\nonumber\\
        &\leq -q\int_{\Omega}v_{\varepsilon}^{-q+3}|\nabla v_{\varepsilon}|^{q-2}|D^2 \ln v_{\varepsilon}|^2.
    \end{align}
   By the Young inequality, there exist $c_2>0$ and $c_3>0$ such that
    \begin{align}\label{2.36}
        &-q\int_{\Omega}v_{\varepsilon}^{-q+1}|\nabla v_{\varepsilon}|^{q-2}\nabla v_{\varepsilon}\cdot\nabla (u_{\varepsilon}
        v_{\varepsilon})+(q-1)\int_{\Omega}u_{\varepsilon}v_{\varepsilon}^{-q+1}|\nabla v_{\varepsilon}|^{q}\nonumber\\
        &=-q\int_{\Omega}v_{\varepsilon}^{-q+2}|\nabla v_{\varepsilon}|^{q-2}\nabla v_{\varepsilon}\cdot\nabla u_{\varepsilon}
        -\int_{\Omega}u_{\varepsilon}v_{\varepsilon}^{-q+1}|\nabla v_{\varepsilon}|^{q}\nonumber\\
        &\leq q\int_{\Omega}v_{\varepsilon}^{-q+2}|\nabla v_{\varepsilon}|^{q-2}\nabla v_{\varepsilon}\cdot\nabla u_{\varepsilon}
        -\int_{\Omega}u_{\varepsilon}v_{\varepsilon}^{-q+1}|\nabla v_{\varepsilon}|^{q}\nonumber\\
        &\leq c_3\int_{\Omega}v_{\varepsilon}^{-q-1}|\nabla v_{\varepsilon}|^{q+2}+c_2\int_{\Omega}v_{\varepsilon}|\nabla
        u_{\varepsilon}|^{\frac{q+2}{3}} -\int_{\Omega}u_{\varepsilon}v_{\varepsilon}^{-q+1}|\nabla v_{\varepsilon}|^{q}.
    \end{align}
    On the other hand, since the identities $\nabla |\nabla
     v_{\varepsilon}|^2=2D^2v_{\varepsilon}\cdot\nabla v_{\varepsilon}$ and $|\Delta v_{\varepsilon}|=\sqrt{3}|D^2v_{\varepsilon}|$, it follows that
     for all $t \in(0, T_{\max, \varepsilon})$
     \begin{align}\label{2.37}
        &-q\int_{\Omega}v_{\varepsilon}^{-q+1}|\nabla v_{\varepsilon}|^{q-2}\nabla v_{\varepsilon}\cdot\nabla (u_{\varepsilon}
        v_{\varepsilon})+(q-1)\int_{\Omega}u_{\varepsilon}v_{\varepsilon}^{-q+1}|\nabla v_{\varepsilon}|^{q}\nonumber\\
        &=q(q-2)\int_{\Omega}u_{\varepsilon}v_{\varepsilon}^{-q+2}|\nabla v_{\varepsilon}|^{q-4}\nabla
        v_{\varepsilon}\cdot(D^2v_{\varepsilon}\cdot\nabla v_{\varepsilon})\nonumber\\
        &+q\int_{\Omega}u_{\varepsilon}v_{\varepsilon}^{-q+2}|\nabla v_{\varepsilon}|^{q-2}\Delta
        v_{\varepsilon}-(q-1)^2\int_{\Omega}u_{\varepsilon}v_{\epsilon}^{-q+1}|\nabla v_{\varepsilon}|^{q}\nonumber\\
        &\leq q(q-2+\sqrt{n})\int_{\Omega}u_{\varepsilon}v_{\varepsilon}^{-q+2}|\nabla v_{\varepsilon}|^{q-2}|D^2v_{\varepsilon}|
        -(q-1)^2\int_{\Omega}u_{\varepsilon}v_{\varepsilon}^{-q+1}|\nabla v_{\varepsilon}|^{q}\nonumber\\
        &\leq c_4\int_{\Omega}v_{\varepsilon}^{-q+1}|\nabla
        v_{\varepsilon}|^{q-2}|D^2v_{\varepsilon}|^2+c_5\int_{\Omega}u_{\varepsilon}^2v_{\varepsilon}^{-q+3}|\nabla
        v_{\varepsilon}|^{q-2}-(q-1)^2\int_{\Omega}u_{\varepsilon}v_{\varepsilon}^{-q+1}|\nabla v_{\varepsilon}|^{q}\nonumber\\
        &\leq c_4\int_{\Omega}v_{\varepsilon}^{-q+1}|\nabla
        v_{\varepsilon}|^{q-2}|D^2v_{\varepsilon}|^2+c_6\int_{\Omega}v_{\varepsilon}^{-q-1}|\nabla v_{\varepsilon}|^{q+2}\nonumber\\
        &+c_7\int_{\Omega}u_{\varepsilon}^{\frac{q+2}{2}}v_{\varepsilon}-(q-1)^2\int_{\Omega}u_{\varepsilon}v_{\varepsilon}^{-q+1}|\nabla
        v_{\varepsilon}|^{q},
    \end{align}
    where $c_i>0, i=4,\cdots, 7$,  are constants. Therefore  \eqref{2.29} follows from \eqref{2.31}--\eqref{2.36}, and
    \eqref{2.30} results from \eqref{2.31}--\eqref{2.35} and \eqref{2.37}.
\end{proof}

By means of the Gagliardo-Nirenberg inequality and the Ehrling lemma, we deduce the following functional inequality, which plays an important
role in estimating  an integral of the type $\int_{\Omega}\phi^{\beta}\psi$ below.
\begin{lemma}\label{lemma2.8}
Let $\Omega \subset \mathbb R^3$, and supposed that $p>0$ and $1< r<6$. Then for any $\eta>0$ one can find $C(\eta, p)>0$ such
that
\begin{align}\label{2.38}
    \|\phi^{\frac{p+1}{2}}\sqrt{\psi}\|^2_{L^{r}(\Omega)}\leq \eta \int_{\Omega}\phi^{p-1}\psi|\nabla \phi|^2+\eta
    \int_{\Omega}\phi^{p+1}\psi^{-1}|\nabla \psi|^2+C(\eta, p)\cdot
    \left\{\int_{\Omega}\phi\right\}^{p}\left\{\int_{\Omega}\phi\psi\right\}
\end{align}for all $\phi\in C^1(\Bar{\Omega})$ and $\psi\in C^1(\overline{\Omega})$  with $\phi>0$ and $\psi>0$ in $\overline{\Omega}$.
\end{lemma}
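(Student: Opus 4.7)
The plan is to introduce $w := \phi^{(p+1)/2}\sqrt{\psi}$ and recast \eqref{2.38} as a Gagliardo--Nirenberg-type interpolation for $w$ on the bounded domain $\Omega\subset\mathbb{R}^3$. A chain rule computation combined with $(a+b)^2\leq 2a^2+2b^2$ yields
\[
|\nabla w|^2 \leq \tfrac{(p+1)^2}{2}\phi^{p-1}\psi|\nabla\phi|^2 + \tfrac{1}{2}\phi^{p+1}\psi^{-1}|\nabla\psi|^2,
\]
so that, up to a factor depending only on $p$, the first two terms on the right-hand side of \eqref{2.38} control $\int_\Omega|\nabla w|^2$.

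Next I exhibit a low-integrability quantity $\|w\|_{L^{s_0}(\Omega)}^2$ that is controlled by the third term of \eqref{2.38}. Taking $s_0:=2/(p+1)$, one has $w^{s_0}=\phi\,\psi^{1/(p+1)}$, and the factorization $\phi\,\psi^{1/(p+1)}=\phi^{p/(p+1)}(\phi\psi)^{1/(p+1)}$ combined with H\"older's inequality with conjugate exponents $(p+1)/p$ and $p+1$ gives
\[
\int_\Omega w^{s_0}\leq \Bigl(\int_\Omega\phi\Bigr)^{p/(p+1)}\Bigl(\int_\Omega\phi\psi\Bigr)^{1/(p+1)}.
\]
Raising to the $(p+1)$-st power and noting that $s_0(p+1)=2$ produces $\|w\|_{L^{s_0}(\Omega)}^2\leq(\int_\Omega\phi)^p\int_\Omega\phi\psi$, which is exactly the structure of the last term in \eqref{2.38}.

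The remaining step is to establish an Ehrling-type inequality $\|w\|_{L^r(\Omega)}^2\leq\tilde{\eta}\int_\Omega|\nabla w|^2+C(\tilde{\eta},p)\|w\|_{L^{s_0}(\Omega)}^2$ for every $\tilde{\eta}>0$; once this is available, combining with the two preceding estimates and rescaling $\tilde{\eta}$ by the $p$-dependent constant from the first step gives \eqref{2.38}. When $s_0\geq r$, H\"older on the bounded domain $\Omega$ alone furnishes $\|w\|_{L^r}\leq|\Omega|^{1/r-1/s_0}\|w\|_{L^{s_0}}$ and no gradient term is needed. In the complementary range $s_0<r$, I interpolate by H\"older, $\|w\|_{L^r}\leq\|w\|_{L^6}^{\theta}\|w\|_{L^{s_0}}^{1-\theta}$ with $\theta\in(0,1)$ determined by $\tfrac{1}{r}=\tfrac{\theta}{6}+\tfrac{1-\theta}{s_0}$, invoke the Sobolev embedding $W^{1,2}(\Omega)\hookrightarrow L^6(\Omega)$, and use Young's inequality on $\|w\|_{L^6}^{2\theta}\|w\|_{L^{s_0}}^{2(1-\theta)}$ to split it into $\tilde{\eta}\|w\|_{L^6}^2+C_{\tilde{\eta}}\|w\|_{L^{s_0}}^2$.

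The main technical obstacle is that $s_0<1$ whenever $p>1$, so $\|\cdot\|_{L^{s_0}}$ is only a quasi-norm and the textbook Gagliardo--Nirenberg interpolation between $\|\nabla w\|_{L^2}$ and $\|w\|_{L^{s_0}}$ is not available off the shelf; moreover the $\|w\|_{L^2}$ produced by $W^{1,2}\hookrightarrow L^6$ is not controlled by $\|w\|_{L^{s_0}}$ through a single H\"older inequality. I bypass this with a second H\"older interpolation $\|w\|_{L^2}\leq\|w\|_{L^6}^{\theta'}\|w\|_{L^{s_0}}^{1-\theta'}$, which only requires $s_0\leq 2$ (automatically true since $p>0$), followed by a Young absorption yielding $\|w\|_{L^6}^2\leq C_1\|\nabla w\|_{L^2}^2+C_2\|w\|_{L^{s_0}}^2$. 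Plugging this bound back into the interpolation for $\|w\|_{L^r}$ and applying Young's inequality one final time delivers the desired Ehrling-type estimate and completes the proof of \eqref{2.38}.
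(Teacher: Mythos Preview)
Your proposal is correct and follows essentially the same route as the paper: set $w=\phi^{(p+1)/2}\sqrt{\psi}$, bound $\|\nabla w\|_{L^2}^2$ by the first two terms of \eqref{2.38}, control $\|w\|_{L^{2/(p+1)}}^2$ by the third term via H\"older, and interpolate. The only real difference is in how the sub-unit exponent $s_0=2/(p+1)<1$ (for $p>1$) is handled. The paper first applies the standard Gagliardo--Nirenberg inequality between $\|\nabla w\|_{L^2}$ and $\|w\|_{L^1}$, and then invokes the Ehrling lemma with the chain $L^r\hookrightarrow\hookrightarrow L^1\hookrightarrow L^{2/(p+1)}$ to absorb the $L^1$ norm into an $\epsilon\|w\|_{L^r}^2$ plus a $C_\epsilon\|w\|_{L^{2/(p+1)}}^2$ term; for $p\in(0,1)$ the embedding $L^{2/(p+1)}\hookrightarrow L^1$ is used directly. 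Your argument instead interpolates directly between $L^6$ and $L^{s_0}$ and closes the Sobolev inequality by a second interpolation of $\|w\|_{L^2}$ and absorption. Both devices accomplish the same thing; yours is more elementary in that it avoids the abstract Ehrling lemma (and the minor issue that $L^{s_0}$ is only quasi-normed), while the paper's version is slightly shorter once Ehrling is taken for granted.
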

\begin{proof}
    From the Gagliardo-Nirenberg inequality, it follows that for $1<r<6$, there exist constants
    $a\in(0,1)$  and $c_1>0$ such that
    \begin{align*}
      \|\phi^{\frac{p+1}{2}}\sqrt{\psi}\|^2_{L^{r}(\Omega)}&\leq c_1\|\nabla
      (\phi^{\frac{p+1}{2}}\sqrt{\psi})\|^{2a}_{L^{2}(\Omega)}\|\phi^{\frac{p+1}{2}}\sqrt{\psi}\|^{2(1-a)}_{L^{1}(\Omega)}
      +c_1\|\phi^{\frac{p+1}{2}}\sqrt{\psi}\|_{L^{1}(\Omega)}^2.
          \end{align*}
    Furthermore,  by Young's inequality,  one can see that  for all $\eta >0$ there is $ c_2(\eta)>0$
    such that
    \begin{align}\label{2.39}
      c_1\|\nabla
      (\phi^{\frac{p+1}{2}}\sqrt{\psi})\|^{2a}_{L^{2}(\Omega)}\|\phi^{\frac{p+1}{2}}\sqrt{\psi}\|^{2(1-a)}_{L^{1}(\Omega)}&\leq \eta \|\nabla (\phi^{\frac{p+1}{2}}\sqrt{\psi})\|_{L^{2}(\Omega)}^2+c_2(\eta)\|\phi^{\frac{p+1}{2}}\sqrt{\psi}\|_{L^{1}(\Omega)}^2.
    \end{align}
     For $p\geq 1$, the compact embedding $L^{r}\hookrightarrow L^{1} $ and continuous embedding $L^{1}\hookrightarrow
    L^{\frac{2}{p+1}}$ allow us to apply Ehrling lemma, which yields
    $$c_2(\eta)\|\phi^{\frac{p+1}{2}}\sqrt{\psi}\|_{L^{1}(\Omega)}^2\leq
    \frac{1}{2}\|\phi^{\frac{p+1}{2}}\sqrt{\psi}\|^2_{L^{r}(\Omega)}+c_3(\eta)\|\phi^{\frac{p+1}{2}}\sqrt{\psi}\|^2_{L^{\frac 2{p+1}}(\Omega)}$$
   with some $c_3(\eta)>0$.  Therefore for all  $p\geq 1$, we have
    $$\|\phi^{\frac{p+1}{2}}\sqrt{\psi}\|^2_{L^{r}(\Omega)}\leq \eta \|\nabla
    (\phi^{\frac{p+1}{2}}\sqrt{\psi})\|_{L^{2}(\Omega)}^2+c_3(\eta)\|\phi^{\frac{p+1}{2}}\sqrt{\psi}\|^2_{L^{\frac{2}{p+1}}(\Omega)}.$$
   On the other hand, for $p\in(0,1)$, one can see  that
    \begin{align*}
      \|\phi^{\frac{p+1}{2}}\sqrt{\psi}\|^2_{L^{r}(\Omega)}
      &\leq \eta \|\nabla
      (\phi^{\frac{p+1}{2}}\sqrt{\psi})\|_{L^{2}(\Omega)}^2+c_2(\eta)\|\phi^{\frac{p+1}{2}}\sqrt{\psi}\|_{L^{1}(\Omega)}^2\nonumber\\
      &\leq \eta \|\nabla
      (\phi^{\frac{p+1}{2}}\sqrt{\psi})\|_{L^{2}(\Omega)}^2+c_4(\eta)\|\phi^{\frac{p+1}{2}}\sqrt{\psi}\|_{L^{\frac2{p+1}}(\Omega)}^2.
    \end{align*}
    Hence from  the H\"{o}lder inequality,  it follows that
    \begin{align}\label{2.40}
    &\eta \|\nabla
    (\phi^{\frac{p+1}{2}}\sqrt{\psi})\|_{L^{2}(\Omega)}^2+ (c_3(\eta)+ c_4(\eta))\|\phi^{\frac{p+1}2}\sqrt{\psi}\|_{L^{\frac2{p+1}}(\Omega)}^2\nonumber\\
    &\leq \eta \int_{\Omega}\phi^{p-1}\psi|\nabla \phi|^2+\eta \int_{\Omega}\phi^{p+1}\psi^{-1}|\nabla \psi|^2+C(\eta,p)\cdot
    \left\{\int_{\Omega}\phi\right\}^{p}\cdot\left\{\int_{\Omega}\phi\psi\right\}.
\end{align}
\end{proof}

\section{Estimates of $ \|u_{\epsilon}(\cdot,t)\|_{L^p(\Omega)}$ }\label{sec:3}
\subsection{Space-time estimate of $u_{\varepsilon}^{p_*-1}v_{\varepsilon}|\nabla u_{\varepsilon}|^2$}

The purpose of  this  subsection is to derive the
space-time estimate of $u_{\varepsilon}^{p_*}v_{\varepsilon}|\nabla u_{\varepsilon}|^2$ with $p^*\in [1-\alpha,0]$. This refines the space-time estimate of $u_{\varepsilon}^{1-\alpha}v_{\varepsilon}|\nabla u_{\varepsilon}|^2$ provided in Lemma \ref{lemma2.3},  and allows us to obtain the bounds for
$\|u_{\varepsilon}(\cdot,t)\|_{L^{p_0}(\Omega)}$ with $p_0>\frac{3}{2}$ in the subsequent subsection. To this end,
as outlined in the introduction,  we shall make sure that $\mathcal{ G_{\varepsilon}}(t)$ enjoys the feature of the  energy-like  quantity.
\begin{lemma}\label{lemma3.1}
Let $\Omega \subset \mathbb R^3$. Then one can find constants $a>0$ and $b>0$ such that for all $t\in (0,T_{max,\varepsilon})$ the functional
$$\mathcal{ G_{\varepsilon}} (t):=\int_{\Omega}u_{\varepsilon}(\cdot,t)\ln{u_{\varepsilon}(\cdot,t)}+a\int_{\Omega}\frac{|\nabla
v_{\varepsilon}(\cdot,t)|^4}{v_{\varepsilon}^{3}(\cdot,t)}$$ satisfies
\begin{align}\label{3.1}
 \mathcal{ G_{\varepsilon}}'(t)&+b\int_{\Omega}\frac{|\nabla
   v_{\varepsilon}|^{2}}{v_{\varepsilon}}|D^2\ln{v_{\varepsilon}}|^2+b\int_{\Omega}u_{\varepsilon}\frac{|\nabla
   v_{\varepsilon}|^4}{v_{\varepsilon}^3}+\frac{1}{2}\int_{\Omega}v_{\varepsilon}|\nabla u_{\varepsilon}|^2 \nonumber
    \\
    &\leq \chi^2\int_{\Omega}u_{\varepsilon}^{2\alpha-2}v_{\varepsilon}|\nabla v_{\varepsilon}|^2+\ell\int_{\Omega} u_{\varepsilon}
    v_{\varepsilon} \ln u_{\varepsilon}+\ell\int_{\Omega}u_{\varepsilon}v_{\varepsilon}+\frac{1}{4}\int_{\Omega}v_{\varepsilon}.
\end{align}
\end{lemma}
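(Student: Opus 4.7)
The natural strategy is to differentiate the two pieces of $\mathcal{G}_\varepsilon$ separately and combine them with a suitably chosen weight $a$, so that the single cross term $\int_\Omega v_\varepsilon|\nabla u_\varepsilon|^2$ is absorbed into the left-hand side.

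\emph{Step 1: entropy evolution.} Testing the first equation of \eqref{2.5} against $1+\ln u_\varepsilon$ and integrating by parts, the porous-medium term yields $-\int_\Omega v_\varepsilon|\nabla u_\varepsilon|^2$, the taxis term yields $\chi\int_\Omega u_\varepsilon^{\alpha-1}v_\varepsilon\nabla u_\varepsilon\cdot\nabla v_\varepsilon$, and the production term yields $\ell\int_\Omega u_\varepsilon v_\varepsilon(1+\ln u_\varepsilon)$. Splitting the taxis term by Young's inequality with the asymmetric weights $\tfrac{1}{4}$ and $\chi^2$ gives
\begin{equation*}
\frac{d}{dt}\int_\Omega u_\varepsilon\ln u_\varepsilon + \frac{3}{4}\int_\Omega v_\varepsilon|\nabla u_\varepsilon|^2
\leq \chi^2\int_\Omega u_\varepsilon^{2\alpha-2}v_\varepsilon|\nabla v_\varepsilon|^2+\ell\int_\Omega u_\varepsilon v_\varepsilon\ln u_\varepsilon+\ell\int_\Omega u_\varepsilon v_\varepsilon,
\end{equation*}
which already accounts for three of the four terms appearing on the right-hand side of \eqref{3.1}.

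\emph{Step 2: evolution of the $v$-functional.} Rather than invoking \eqref{2.29} of Lemma \ref{lemma2.7} (whose right-hand side would involve the uncontrollable quantity $\int_\Omega u_\varepsilon^{3}v_\varepsilon$), I apply the alternative inequality \eqref{2.30} with the choice $q=4$, which is tailored precisely to our needs because $(q+2)/3=2$. This produces
\begin{equation*}
\frac{d}{dt}\int_\Omega\frac{|\nabla v_\varepsilon|^4}{v_\varepsilon^{3}}+\gamma(4)\int_\Omega\frac{|\nabla v_\varepsilon|^{2}}{v_\varepsilon}|D^2\ln v_\varepsilon|^2+\gamma(4)\int_\Omega u_\varepsilon\frac{|\nabla v_\varepsilon|^4}{v_\varepsilon^{3}}\leq\frac{1}{\gamma(4)}\int_\Omega v_\varepsilon\bigl(|\nabla u_\varepsilon|^2+1\bigr),
\end{equation*}
so the only cross coupling with $u_\varepsilon$ on the right is $\tfrac{1}{\gamma(4)}\int_\Omega v_\varepsilon|\nabla u_\varepsilon|^2$, which is exactly of the form already dissipated by the entropy.

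\emph{Step 3: combining and choosing the constants.} Adding Step 1 to $a$ times Step 2 produces
\begin{align*}
\mathcal{G}_\varepsilon'(t)&+\Bigl(\frac{3}{4}-\frac{a}{\gamma(4)}\Bigr)\int_\Omega v_\varepsilon|\nabla u_\varepsilon|^2+a\gamma(4)\int_\Omega\frac{|\nabla v_\varepsilon|^{2}}{v_\varepsilon}|D^2\ln v_\varepsilon|^2+a\gamma(4)\int_\Omega u_\varepsilon\frac{|\nabla v_\varepsilon|^4}{v_\varepsilon^{3}}\\
&\leq \chi^2\int_\Omega u_\varepsilon^{2\alpha-2}v_\varepsilon|\nabla v_\varepsilon|^2+\ell\int_\Omega u_\varepsilon v_\varepsilon\ln u_\varepsilon+\ell\int_\Omega u_\varepsilon v_\varepsilon+\frac{a}{\gamma(4)}\int_\Omega v_\varepsilon.
\end{align*}
Selecting $a:=\gamma(4)/4$ makes the parenthesized coefficient equal to $\tfrac{1}{2}$ and the coefficient in front of $\int_\Omega v_\varepsilon$ equal to $\tfrac{1}{4}$, as required. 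Finally, setting $b:=\min\{a\gamma(4),\tfrac{1}{2}\}$ delivers \eqref{3.1}.

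\emph{Anticipated difficulty.} The computation itself is routine once Step 2 is in place; the main conceptual point is recognizing that Lemma \ref{lemma2.7} offers two different variants \eqref{2.29} and \eqref{2.30}, and that only the latter is compatible with the dissipation produced by the entropy, because it converts the chemotactic coupling into an expression linear in $v_\varepsilon|\nabla u_\varepsilon|^2$. Getting the numerical constants in Young's inequality consistent (so that the residual coefficient of $\int_\Omega v_\varepsilon|\nabla u_\varepsilon|^2$ stays $\geq\tfrac{1}{2}$ and the residual coefficient of $\int_\Omega v_\varepsilon$ stays $\leq\tfrac{1}{4}$) forces the particular choices above, but presents no real obstacle.
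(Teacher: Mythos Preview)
Your proposal is correct and follows essentially the same route as the paper: test the $u_\varepsilon$-equation against $1+\ln u_\varepsilon$, apply inequality \eqref{2.30} of Lemma~\ref{lemma2.7} with $q=4$, and combine the two with weight $a=\gamma(4)/4$. The paper simply takes $b=a\gamma(4)=\gamma(4)^2/4$ rather than your $\min\{a\gamma(4),\tfrac12\}$, but either choice works.
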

\begin{proof}
   Multiplying the first equation in \eqref{2.5} by $1+\ln u_{\varepsilon}$, we use the Young inequality and $\ln \xi \leqslant \xi$ for all $\xi>0$ to infer
   that
\begin{align}\label{3.2}
& \frac{d}{dt} \int_{\Omega} u_{\varepsilon} \ln u_{\varepsilon}+\int_{\Omega} v_{\varepsilon}\left|\nabla u_{\varepsilon}\right|^2\nonumber \\
=& \chi \int_{\Omega} u_{\varepsilon}^{\alpha-1} v_{\varepsilon} \nabla u_{\varepsilon} \cdot \nabla v_{\varepsilon}+ \ell\int_{\Omega}
u_{\varepsilon} v_{\varepsilon} \ln u_{\varepsilon} + \ell\int_{\Omega} u_{\varepsilon} v_{\varepsilon}\nonumber\\
\leq &  \frac{1}{4} \int_{\Omega} v_{\varepsilon}\left|\nabla u_{\varepsilon}\right|^2+ \chi^2 \int_{\Omega} u_{\varepsilon}^{2\alpha-2}
v_{\varepsilon}\left|\nabla v_{\varepsilon}\right|^2 +\ell\int_{\Omega} u_{\varepsilon} v_{\varepsilon} \ln u_{\varepsilon}+\ell\int_{\Omega}
u_{\varepsilon} v_{\varepsilon}
\end{align}
for all $t \in(0, T_{max, \varepsilon})$. On the other hand, applying \eqref{2.30} to $q:=4$, we have
\begin{align}\label{3.3}
    \frac{d}{dt}\int_{\Omega}\frac{|\nabla v_{\varepsilon}|^4}{v_{\varepsilon}^{3}}+\gamma(4)\int_{\Omega}\frac{|\nabla
    v_{\varepsilon}|^{2}}{v_{\varepsilon}}|D^2\ln{v_{\varepsilon}}|^2+\gamma(4)\int_{\Omega}u_{\varepsilon}\frac{|\nabla
    v_{\varepsilon}|^4}{v_{\varepsilon}^{3}}\leq \frac{1}{\gamma(4)}\int_{\Omega}v_{\varepsilon}(|\nabla u_{\varepsilon}|^{2}+1).
\end{align}
Combining \eqref{3.2} with \eqref{3.3}, we arrive at
\begin{align*}
&\frac{d}{dt}\left\{\int_{\Omega} u_{\varepsilon} \ln u_{\varepsilon}+\frac{\gamma(4)}{4}\int_{\Omega} \frac{\left|\nabla
v_{\varepsilon}\right|^4}{v_{\varepsilon}^3}\right\}\nonumber
\\
&+\frac{\gamma^2(4)}{4}\int_{\Omega}\frac{|\nabla
v_{\varepsilon}|^{2}}{v_{\varepsilon}}|D^2\ln{v_{\varepsilon}}|^2+\frac{\gamma^2(4)}{4}\int_{\Omega}u_{\varepsilon}\frac{|\nabla
v_{\varepsilon}|^4}{v_{\varepsilon}^3}+\frac{1}{2}\int_{\Omega}v_{\varepsilon}|\nabla u_{\varepsilon}|^2 \nonumber
    \\
 &\leq \chi^2\int_{\Omega}u_{\varepsilon}^{2\alpha-2}v_{\varepsilon}|\nabla v_{\varepsilon}|^2+\ell\int_{\Omega} u_{\varepsilon} v_{\varepsilon}
 \ln u_{\varepsilon}+\ell\int_{\Omega}u_{\varepsilon}v_{\varepsilon}+\frac{1}{4}\int_{\Omega}v_{\varepsilon}
\end{align*}
for all $t \in(0, T_{max, \varepsilon})$. This readily leads to \eqref{3.1} with $a=\frac{\gamma(4)}{4}$ and
 $b=\frac{\gamma^2(4)}{4}$.
\end{proof}

By Lemma \ref{lemma2.3}, Lemma \ref{lemma2.5} and Lemma \ref{lemma2.6}, we can obtain the estimate of $\int_0^{T_{\max}}\int_\Omega u_{\varepsilon}^{\frac{5}{3}}v_{\varepsilon}$, which allows us to address the unfavorable contributions on the right-hand side of \eqref{3.1} whenever the exponent $\alpha\in\left(\frac{3}{2},2\right]$.
\begin{lemma} \label{lemma3.2}
Let $\Omega \subset \mathbb R^3$, suppose that $\alpha\in\left(\frac{3}{2},2\right]$ and $K>0$ with the
property \eqref{1.6} be valid. Then there exists  $C=C(K)>0$ such that
\begin{align}\label{3.4}
    \int_0^{T_{max,\varepsilon}}\int_{\Omega}u_{\varepsilon}^{\frac{5}{3}}v_{\varepsilon}\leq C.
\end{align}
\end{lemma}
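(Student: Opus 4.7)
The plan is to apply the interpolation inequality of Lemma \ref{lemma2.6} pointwise in time with a carefully chosen exponent $q$, and then integrate in time so that each term on the right-hand side is already controlled by one of the a~priori estimates already available.

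More precisely, I would take $p^{*}:=1$ so that $p=\tfrac{2p^{*}+3}{3}=\tfrac{5}{3}$ is exactly the exponent on $u_{\varepsilon}$ appearing in \eqref{3.4}. By \eqref{2.9}, $\int_{\Omega}u_{\varepsilon}(\cdot,t)\le M$ uniformly in $t\in(0,T_{\max,\varepsilon})$ and $\varepsilon\in(0,1)$ for some $M=M(K)>0$, so Lemma \ref{lemma2.6} is applicable with $\phi:=u_{\varepsilon}(\cdot,t)$, $\psi:=v_{\varepsilon}(\cdot,t)$. The admissible range for $q$ is $[0,\tfrac{2p^{*}}{3}]=[0,\tfrac{2}{3}]$. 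I then choose $q:=2-\alpha$, which lies in $[0,\tfrac{1}{2})\subset[0,\tfrac{2}{3}]$ precisely because $\alpha\in(\tfrac{3}{2},2]$; with this choice $q-1=1-\alpha$, which matches the exponent dissipated by Lemma \ref{lemma2.3}.

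With these choices, \eqref{eq2.28} yields
\begin{align*}
\int_{\Omega}u_{\varepsilon}^{\frac{5}{3}}v_{\varepsilon}
\le C(M,1)\Bigl\{\int_{\Omega}u_{\varepsilon}^{1-\alpha}v_{\varepsilon}|\nabla u_{\varepsilon}|^{2}
+\int_{\Omega}\frac{u_{\varepsilon}}{v_{\varepsilon}}|\nabla v_{\varepsilon}|^{2}
+\int_{\Omega}u_{\varepsilon}v_{\varepsilon}\Bigr\}
\end{align*}
for all $t\in(0,T_{\max,\varepsilon})$. Integrating this identity in time over $(0,T_{\max,\varepsilon})$, the first term on the right is controlled by \eqref{2.13}, the second by \eqref{2.19}, and the third by \eqref{2.10} (which gives $\int_{0}^{T_{\max,\varepsilon}}\int_{\Omega}u_{\varepsilon}v_{\varepsilon}\le\int_{\Omega}v_{0}$). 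All three bounds are independent of $T_{\max,\varepsilon}$ and depend only on $K$, so \eqref{3.4} follows with $C=C(K)>0$.

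There is no real obstacle here beyond bookkeeping: the entire point of the preparatory Lemmas \ref{lemma2.3} and \ref{lemma2.5} was to secure exactly the three ingredients needed, and the choice $q=2-\alpha$ is the unique one that couples the dissipation of Lemma \ref{lemma2.3} to the interpolation inequality. The only substantive check is that $q=2-\alpha\in[0,\tfrac{2}{3}]$, which is where the hypothesis $\alpha\in(\tfrac{3}{2},2]$ enters; in particular the lower bound $\alpha>\tfrac{3}{2}$ is needed to keep $q<\tfrac{1}{2}\le\tfrac{2}{3}$, while $\alpha\le 2$ ensures $q\ge 0$.
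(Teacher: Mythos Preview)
Your proof is correct and follows essentially the same route as the paper: apply Lemma~\ref{lemma2.6} with $p^{*}=1$ and $q=2-\alpha$, then bound the three resulting space--time integrals via \eqref{2.13}, \eqref{2.19}, and \eqref{2.10}. One minor expository inaccuracy: your closing remark that ``$\alpha>\tfrac{3}{2}$ is needed to keep $q<\tfrac{1}{2}\le\tfrac{2}{3}$'' overstates the constraint, since Lemma~\ref{lemma2.6} only requires $q\le\tfrac{2}{3}$, i.e.\ $\alpha\ge\tfrac{4}{3}$; the stronger hypothesis $\alpha>\tfrac{3}{2}$ is used elsewhere (e.g.\ in Lemma~\ref{lemma2.5}) but not in this particular step.
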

\begin{proof}
    Since \eqref{2.9} and \eqref{1.6}, there exists $c_1=c_1(K)>0$ such that
    \begin{align*}
        \int_{\Omega}u_{\varepsilon}(\cdot,t)\leq c_1 \qquad for\; all\; t\in(0,T_{max,\varepsilon}).
    \end{align*}
  Due to $\alpha\in(\frac{3}{2},2]$, then $q:=2-\alpha\in[0,\frac{2}{3}]$. As an application of Lemma \ref{lemma2.6} to $p^*:=1$,  there exists $c_2>0$
   such that
   \begin{align*}
    \int_0^{t}\int_{\Omega}u_{\varepsilon}^{\frac{5}{3}}v_{\varepsilon}\leq c_2
    \left\{\int_0^{t}\int_{\Omega}u_{\varepsilon}^{1-\alpha}v_{\varepsilon}|\nabla
    u_{\varepsilon}|^2+\int_0^{t}\int_{\Omega}\frac{u_{\varepsilon}}{v_{\varepsilon}}|\nabla
    v_{\varepsilon}|^2+\int_0^{t}\int_{\Omega}u_{\varepsilon}v_{\varepsilon}\right\}
\end{align*}
for all $t \in(0, T_{max, \varepsilon})$. Hence from \eqref{2.10}, \eqref{2.13} and \eqref{2.19}, there exists  $C(K)>0$ such that \eqref{3.4} is valid.
\end{proof}

With the help of  two lemmas above, the quasi-energy feature of $G_{\varepsilon}(t)$ can be verified and thereby  space-time estimates of  $u_{\varepsilon}\frac{|\nabla v_{\varepsilon}|^4}{v_{\varepsilon}^{3}} $ and $v_{\varepsilon}|\nabla u_{\varepsilon}|^2$ are established.
\begin{lemma} \label{lemma3.3}
Let $\Omega \subset \mathbb R^3$ and $\alpha\in\left(\frac{3}{2},\frac{5}{3}\right]$, suppose that $K>0$ with the
property \eqref{1.6} be valid. Then there exists $C=C(K)>0$ such that
\begin{align}\label{3.5}
    \int_0^{T_{max,\varepsilon}}\int_{\Omega}u_{\varepsilon}\frac{|\nabla v_{\varepsilon}|^4}{v_{\varepsilon}^{3}}\leq C
\end{align}
and
\begin{align}\label{3.6}
    \int_0^{T_{max,\varepsilon}}\int_{\Omega}v_{\varepsilon}|\nabla u_{\varepsilon}|^2\leq C.
\end{align}
\end{lemma}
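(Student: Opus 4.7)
The plan is to integrate the differential inequality \eqref{3.1} over $(0,t)$ and to absorb each of the four unfavorable contributions on its right-hand side either into the dissipation terms on the left or into the previously established space-time bounds, most importantly into \eqref{3.4} from Lemma \ref{lemma3.2}. To initialize, one first observes that $\mathcal{G}_\varepsilon(0)$ is bounded by some $C(K)$: the $L^\infty$ bound on $u_0+\varepsilon$ and the identity $\frac{|\nabla v_0|^4}{v_0^3}=v_0|\nabla\ln v_0|^4$ combined with \eqref{1.6} control the two pieces, while $\xi\ln\xi\geq -e^{-1}$ yields a uniform lower bound $\mathcal{G}_\varepsilon(t)\geq-e^{-1}|\Omega|$.

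The decisive step is the treatment of $\chi^2\int_\Omega u_\varepsilon^{2\alpha-2}v_\varepsilon|\nabla v_\varepsilon|^2$. Writing
\begin{equation*}
u_\varepsilon^{2\alpha-2}v_\varepsilon|\nabla v_\varepsilon|^2
=\Bigl(u_\varepsilon^{1/2}v_\varepsilon^{-3/2}|\nabla v_\varepsilon|^2\Bigr)\cdot\Bigl(u_\varepsilon^{2\alpha-5/2}v_\varepsilon^{5/2}\Bigr)
\end{equation*}
and applying Young's inequality with exponents $(2,2)$, one half of the factor reproduces the dissipation integrand $u_\varepsilon\frac{|\nabla v_\varepsilon|^4}{v_\varepsilon^3}$, which can be absorbed on the left of \eqref{3.1} provided the Young parameter is chosen smaller than $b/\chi^2$. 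The remainder is bounded by a constant times $\int_\Omega u_\varepsilon^{4\alpha-5}v_\varepsilon^5$, and using $v_\varepsilon\leq\|v_0\|_{L^\infty(\Omega)}$ reduces it to $\int_\Omega u_\varepsilon^{4\alpha-5}v_\varepsilon$. Here is where the restriction $\alpha\in(\tfrac{3}{2},\tfrac{5}{3}]$ enters: the conditions $\alpha>\tfrac{3}{2}$ and $\alpha\leq\tfrac{5}{3}$ are equivalent to $1<4\alpha-5\leq\tfrac{5}{3}$, so the pointwise bound $u_\varepsilon^{4\alpha-5}\leq 1+u_\varepsilon^{5/3}$ is valid, and after time integration we arrive exactly at $\int_0^t\!\int_\Omega v_\varepsilon+\int_0^t\!\int_\Omega u_\varepsilon^{5/3}v_\varepsilon$, both of which are controlled by Lemma \ref{lemma2.5} and Lemma \ref{lemma3.2}.

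The remaining three terms are softer. Using $\xi\ln\xi\leq C\xi^{5/3}$ on $\{\xi\geq 1\}$ and $\xi\ln\xi\leq 0$ on $\{\xi<1\}$, one estimates $\ell\int_\Omega u_\varepsilon v_\varepsilon\ln u_\varepsilon\leq C\int_\Omega u_\varepsilon^{5/3}v_\varepsilon$, again absorbed via Lemma \ref{lemma3.2}. The integrals $\ell\int_\Omega u_\varepsilon v_\varepsilon$ and $\tfrac14\int_\Omega v_\varepsilon$ are handled directly by \eqref{2.10} and \eqref{2.18}. Putting everything together, the time integral of \eqref{3.1} yields a uniform bound on $\int_0^t\!\int_\Omega u_\varepsilon\frac{|\nabla v_\varepsilon|^4}{v_\varepsilon^3}$ and on $\int_0^t\!\int_\Omega v_\varepsilon|\nabla u_\varepsilon|^2$ independent of $t\in(0,T_{\max,\varepsilon})$ and $\varepsilon$, which are precisely \eqref{3.5} and \eqref{3.6}.

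The main obstacle is the first step: one has to convert $u_\varepsilon^{2\alpha-2}v_\varepsilon|\nabla v_\varepsilon|^2$ into quantities already known to be integrable in space-time without loosing the delicate matching with the $u_\varepsilon\frac{|\nabla v_\varepsilon|^4}{v_\varepsilon^3}$ dissipation. The algebra of the Young splitting shows that the only available spare integrability comes from the $L^{5/3}_{uv}$ bound of Lemma \ref{lemma3.2}, which tightens the admissible range for $\alpha$ to $(\tfrac{3}{2},\tfrac{5}{3}]$ in this lemma.
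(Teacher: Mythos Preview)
Your proof is correct and follows essentially the same route as the paper: integrate \eqref{3.1} in time, control $\mathcal{G}_\varepsilon(0)$ via \eqref{1.6} and bound $\mathcal{G}_\varepsilon(t)$ from below using $\xi\ln\xi\geq -e^{-1}$, split $\chi^2\int_\Omega u_\varepsilon^{2\alpha-2}v_\varepsilon|\nabla v_\varepsilon|^2$ by Young's inequality into $\tfrac{b}{2}\int_\Omega u_\varepsilon\frac{|\nabla v_\varepsilon|^4}{v_\varepsilon^3}$ plus a multiple of $\int_\Omega u_\varepsilon^{4\alpha-5}v_\varepsilon$, and then absorb the latter together with $\ell\int_\Omega u_\varepsilon v_\varepsilon\ln u_\varepsilon$ into $\int_\Omega u_\varepsilon^{5/3}v_\varepsilon+\int_\Omega v_\varepsilon$ via the exponent constraint $4\alpha-5\in(1,\tfrac53]$, invoking Lemmas \ref{lemma3.2}, \ref{lemma2.5} and \eqref{2.10}. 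The paper carries out exactly these steps with the same constants and the same use of the $L^{5/3}$ space-time bound \eqref{3.4}.
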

\begin{proof}
   In light of condition $\alpha\in\left(\frac{3}{2},\frac{5}{3}\right]$, $4\alpha-5\in(0,\frac{5}{3}]$. Therefore from  Lemma
   \ref{lemma3.1}, it follows that for all $t\in (0,T_{max,\varepsilon})$,
    \begin{align}\label{3.7}
  \mathcal{ G_{\varepsilon}}'(t)&+b\int_{\Omega}\frac{|\nabla
   v_{\varepsilon}|^{2}}{v_{\varepsilon}}|D^2\ln{v_{\varepsilon}}|^2+b\int_{\Omega}u_{\varepsilon}\frac{|\nabla
   v_{\varepsilon}|^4}{v_{\varepsilon}^3}+\frac{1}{2}\int_{\Omega}v_{\varepsilon}|\nabla u_{\varepsilon}|^2 \nonumber
    \\
    &\leq \chi^2\int_{\Omega}u_{\varepsilon}^{2\alpha-2}v_{\varepsilon}|\nabla v_{\varepsilon}|^2+\ell\int_{\Omega} u_{\varepsilon}
    v_{\varepsilon} \ln u_{\varepsilon}+\ell\int_{\Omega}u_{\varepsilon}v_{\varepsilon}+\frac{1}{4}\int_{\Omega}v_{\varepsilon}\nonumber
   \\
    &\leq c_1\int_{\Omega}u_{\varepsilon}^{4\alpha-5}v_{\varepsilon}+\frac{b}{2}\int_{\Omega}u_{\varepsilon}\frac{|\nabla v_{\varepsilon}|^4}{v_{\varepsilon}^3}+\ell\int_{\Omega}
    u_{\varepsilon}^{\frac{5}{3}}v_{\varepsilon}+\ell\int_{\Omega}u_{\varepsilon}v_{\varepsilon}+\frac{1}{4}\int_{\Omega}v_{\varepsilon}\nonumber
    \\
    &\leq (c_1+ \ell) \int_{\Omega} u_{\varepsilon}^{\frac{5}{3}}v_{\varepsilon}+\frac{b}{2}\int_{\Omega}u_{\varepsilon}\frac{|\nabla
    v_{\varepsilon}|^4}{v_{\varepsilon}^3}+\ell\int_{\Omega}u_{\varepsilon}v_{\varepsilon}+(c_1+1)\int_{\Omega}v_{\varepsilon}
\end{align}
with  $c_1:=\frac{\chi^2\|v_0\|_{L^{\infty}}^4}{b}$ and hence
 \begin{align}\label{3.8}
    &\mathcal{ G_{\varepsilon}}'(t)+b\int_{\Omega}\frac{|\nabla
    v_{\varepsilon}|^{2}}{v_{\varepsilon}}|D^2\ln{v_{\varepsilon}}|^2+\frac{b}{2}\int_{\Omega}u_{\varepsilon}\frac{|\nabla
    v_{\varepsilon}|^4}{v_{\varepsilon}^3}+\frac{1}{2}\int_{\Omega}v_{\varepsilon}|\nabla u_{\varepsilon}|^2\nonumber
    \\
    &\leq c_2\int_{\Omega}
    u_{\varepsilon}^{\frac{5}{3}}v_{\varepsilon}+
    \ell\int_{\Omega}u_{\varepsilon}v_{\varepsilon}+c_2\int_{\Omega}v_{\varepsilon}
\end{align}
with $c_2:=c_1+1+\ell$. After an integration over time, this together with \eqref{3.4}, \eqref{2.10}, \eqref{2.18} and \eqref{1.6} entails that
\begin{align*}
     & \frac{b}{2}\int_0^{t}\int_{\Omega}u_{\varepsilon}\frac{|\nabla
     v_{\varepsilon}|^4}{v_{\varepsilon}^3}+\frac{1}{2}\int_0^{t}\int_{\Omega}v_{\varepsilon}|\nabla
     u_{\varepsilon}|^2+b\int_0^{t}\int_{\Omega}\frac{|\nabla v_{\varepsilon}|^{2}}{v_{\varepsilon}}|D^2\ln{v_{\varepsilon}}|^2 \nonumber
    \\
    \leq& c_2\int_0^{t}\int_{\Omega}
    u_{\varepsilon}^{\frac{5}{3}}v_{\varepsilon}+\ell\int_0^{t}\int_{\Omega}u_{\varepsilon}v_{\varepsilon}+c_2\int_0^{t}\int_{\Omega}v_{\varepsilon}\nonumber
    \\
    &+\int_{\Omega} u_{0\varepsilon} \ln u_{0\varepsilon}+a \int_{\Omega} \frac{\left|\nabla
    v_{0\varepsilon}\right|^4}{v_{0\varepsilon}^3}-\int_{\Omega} u_{\varepsilon} \ln u_{\varepsilon}\nonumber
    \\
    \leq &c_2\int_0^{t}\int_{\Omega}
    u_{\varepsilon}^{\frac{5}{3}}v_{\varepsilon}+\ell\int_0^{t}\int_{\Omega}u_{\varepsilon}v_{\varepsilon}+c_2
\int_0^{t}\int_{\Omega}v_{\varepsilon}\nonumber
    \\
    &+\int_{\Omega} (u_0+1)^2+a\int_{\Omega} \frac{\left|\nabla
    v_{0}\right|^4}{v_{0}^3}+\frac{|\Omega|}{e}\nonumber
    \\
    \leq&C(K)
\end{align*}
for all $t\in (0,T_{max,\varepsilon})$, and thereby completes the proof.
\end{proof}

As a direct consequence of  \eqref{3.6} and  \eqref{2.13}, we have

\begin{lemma} \label{lemma3.4} Let $\alpha\in\left(\frac{3}{2},\frac{5}{3}\right]$ and $K>0$ with the
property \eqref{1.6} be valid. Then for any  $p_*\in [1-\alpha, 0]$ and $C=C(K)>0$ such that
\begin{align}\label{3.9}
\int_0^{T_{max,\varepsilon}}\int_{\Omega}u_{\varepsilon}^{p_*}v_{\varepsilon}|\nabla u_{\varepsilon}|^2\leq C.
\end{align}
\end{lemma}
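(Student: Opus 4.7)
The proof should be a short interpolation between the two endpoint estimates already at our disposal. The endpoint $p_*=0$ is covered directly by \eqref{3.6}, while the endpoint $p_*=1-\alpha$ is covered by \eqref{2.13}. Both of these bounds hold uniformly in $\varepsilon\in(0,1)$ and $T_{\max,\varepsilon}$, with constants depending only on $K$ through \eqref{1.5}--\eqref{1.6}.

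My plan is to handle the intermediate values $p_*\in(1-\alpha,0)$ by splitting $\Omega$ according to the size of $u_\varepsilon$. On the set $\{u_\varepsilon\geq 1\}$, since $p_*\leq 0$ we have $u_\varepsilon^{p_*}\leq 1$ pointwise, so
\begin{align*}
\int_0^{T_{\max,\varepsilon}}\!\int_{\{u_\varepsilon\geq 1\}} u_\varepsilon^{p_*}v_\varepsilon|\nabla u_\varepsilon|^2
\leq \int_0^{T_{\max,\varepsilon}}\!\int_\Omega v_\varepsilon|\nabla u_\varepsilon|^2,
\end{align*}
which is controlled via \eqref{3.6}. On the complementary set $\{u_\varepsilon<1\}$, the assumption $p_*\geq 1-\alpha$ yields $u_\varepsilon^{p_*}\leq u_\varepsilon^{1-\alpha}$ (here $1-\alpha<0$, so the inequality $u_\varepsilon^{p_*}\le u_\varepsilon^{1-\alpha}$ on $u_\varepsilon<1$ follows from monotonicity of $\xi\mapsto \xi^{s}$ in $s$ for $\xi\in(0,1)$), hence
\begin{align*}
\int_0^{T_{\max,\varepsilon}}\!\int_{\{u_\varepsilon<1\}} u_\varepsilon^{p_*}v_\varepsilon|\nabla u_\varepsilon|^2
\leq \int_0^{T_{\max,\varepsilon}}\!\int_\Omega u_\varepsilon^{1-\alpha}v_\varepsilon|\nabla u_\varepsilon|^2,
\end{align*}
which is controlled via \eqref{2.13}. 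Adding the two contributions produces the claimed bound with a constant $C=C(K)>0$.

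As an equally quick alternative, one may interpolate directly: writing $p_*=\theta(1-\alpha)$ with $\theta=p_*/(1-\alpha)\in[0,1]$ (using $1-\alpha<0$), H\"older's inequality in space-time gives
\begin{align*}
\int_0^{T_{\max,\varepsilon}}\!\int_\Omega u_\varepsilon^{p_*}v_\varepsilon|\nabla u_\varepsilon|^2
\leq \Bigl(\int_0^{T_{\max,\varepsilon}}\!\int_\Omega u_\varepsilon^{1-\alpha}v_\varepsilon|\nabla u_\varepsilon|^2\Bigr)^{\theta}
\Bigl(\int_0^{T_{\max,\varepsilon}}\!\int_\Omega v_\varepsilon|\nabla u_\varepsilon|^2\Bigr)^{1-\theta},
\end{align*}
where the factor $v_\varepsilon|\nabla u_\varepsilon|^2$ is distributed as $(u_\varepsilon^{1-\alpha}v_\varepsilon|\nabla u_\varepsilon|^2)^{\theta}(v_\varepsilon|\nabla u_\varepsilon|^2)^{1-\theta}$, and both factors on the right are bounded by $C(K)$. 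There is no substantive obstacle here; the only care required is in verifying that the restriction $p_*\in[1-\alpha,0]$ is exactly what is needed so that one or the other monotonicity (respectively interpolation exponent) is available, and in tracking that the constants depend only on $K$ via the earlier lemmas.
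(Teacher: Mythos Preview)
Your proposal is correct and takes essentially the same approach as the paper: the paper simply packages your splitting argument into the single pointwise inequality $u_\varepsilon^{p_*}\le 1+u_\varepsilon^{1-\alpha}$ for $p_*\in[1-\alpha,0]$ and then invokes \eqref{3.6} and \eqref{2.13}. Your H\"older alternative is also valid but unnecessary here.
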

\begin{proof} Since  $ u_{\varepsilon}^{p_*}\leq 1+u_{\varepsilon}^{1-\alpha}$
for   any
$p_*\in [1-\alpha, 0]$, we have
$$
\int_0^{T_{max,\varepsilon}}\int_{\Omega} u_{\varepsilon}^{p_*}v_{\varepsilon}|\nabla u_{\varepsilon}|^2 \leq  \int_0^{T_{max,\varepsilon}}\int_{\Omega}
v_{\varepsilon}|\nabla u_{\varepsilon}|^2+\int_0^{T_{max,\varepsilon}}\int_{\Omega}u_{\varepsilon}^{1-\alpha}v_{\varepsilon}|\nabla u_{\varepsilon}|^2,
$$
which along with  \eqref{3.6} and \eqref{2.13} entails \eqref{3.9}   readily.
 \end{proof}

\subsection{ $L^{p_0}$-estimates of $u_{\varepsilon}$ with some $p_0>\frac{3}{2}$}

This subsection establishes the $L^{p_0}$-bounds for $u_{\varepsilon}$ with some $p_0>\frac{3}{2}$, which serves as the foundation for a bootstrap argument to achieve  the $L^{p}$ bounds of  $u_{\varepsilon}$ for arbitrary $p>1$.
As a prerequisite for the derivation of estimate on $L^{p_0}$-bounds for $u_{\varepsilon}$, we apply  Lemma \ref{lemma2.8} to show
that  the integral of the form
$\int_{\Omega}\varphi^{\beta}\psi$  can be dominated by
$\int_{\Omega}\varphi^{k}\psi|\nabla \varphi|^2$ when added $
    \int_{\Omega}\varphi\frac{|\nabla \psi|^4}{\psi^3}
$ 
and term $\int_{\Omega}  \varphi \psi$.
\begin{lemma}\label{lemma3.5} Let $L>0$, $k\in(-1,-\frac{1}{3})$ and $\beta\in[1,k+\frac{8}{3})$. There exists $C=C(L,k, \beta)>0$ such that
if  $\int_{\Omega}\varphi\leq L$, then
  \begin{align}\label{3.10}
    \int_{\Omega}\varphi^{\beta}\psi\leq  \int_{\Omega}\varphi^{k}\psi|\nabla \varphi|^2+\int_{\Omega}\varphi\frac{|\nabla \psi|^4}{\psi^3}+C\int_{\Omega}\varphi \psi
\end{align} is valid for all $\varphi\in C^1(\overline \Omega)$ and $\psi \in C^1(\overline \Omega)$ fulfilling $\varphi> 0$ and $\psi>0$ in $\overline \Omega$.
\end{lemma}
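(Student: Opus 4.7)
The strategy is to reduce $\int_\Omega \varphi^\beta \psi$ via H\"older to a bound on $\|\varphi^{(k+2)/2}\sqrt\psi\|_{L^r(\Omega)}^2$ and then apply Lemma~\ref{lemma2.8} with $p=k+1$ to estimate that quantity. For $\beta\in(k+2,k+8/3)$ the exponent $r^*:=2/(k+3-\beta)$ lies in $(2,6)$, and for $r\in[r^*,6)$ H\"older with conjugate exponents $r/2$ and $r/(r-2)$ yields
\[\int_\Omega\varphi^\beta\psi=\int_\Omega(\varphi^{k+2}\psi)\cdot\varphi^{\beta-k-2}\leq\|\varphi^{(k+2)/2}\sqrt\psi\|_{L^r(\Omega)}^2\cdot\|\varphi^{\beta-k-2}\|_{L^{r/(r-2)}(\Omega)};\]
the choice of $r$ ensures $(\beta-k-2)r/(r-2)\leq 1$, so the second factor is dominated by a constant depending only on $L$ and $|\Omega|$ thanks to $\int_\Omega\varphi\leq L$. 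Lemma~\ref{lemma2.8} then provides, for any $\eta>0$,
\[\|\varphi^{(k+2)/2}\sqrt\psi\|_{L^r(\Omega)}^2\leq\eta\int_\Omega\varphi^k\psi|\nabla\varphi|^2+\eta E+C(\eta,L)\int_\Omega\varphi\psi,\]
where $E:=\int_\Omega\varphi^{k+2}\psi^{-1}|\nabla\psi|^2$.

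The principal obstacle is the term $E$, whose integrand carries $|\nabla\psi|^2$ instead of the $|\nabla\psi|^4/\psi^3$ appearing in the target inequality. I propose to bridge this mismatch by Cauchy--Schwarz followed by Young's inequality:
\[E\leq\left(\int_\Omega\varphi\frac{|\nabla\psi|^4}{\psi^3}\right)^{1/2}\left(\int_\Omega\varphi^{2k+3}\psi\right)^{1/2}\leq\epsilon\int_\Omega\varphi\frac{|\nabla\psi|^4}{\psi^3}+C_\epsilon\int_\Omega\varphi^{2k+3}\psi\]
for every $\epsilon>0$. Because $k\in(-1,-\tfrac{1}{3})$ forces $2k+3\in(1,k+8/3)$, the remaining integral $\int_\Omega\varphi^{2k+3}\psi$ is itself of the very form we are trying to bound, creating a self-referential situation that is the heart of the argument.

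To close the loop, I would apply the same chain of estimates with $\beta$ replaced by $2k+3$, and then exploit the freedom in $\eta$ and $\epsilon$ to absorb $\int_\Omega\varphi^{2k+3}\psi$ back into its own left-hand side: taking $\eta$ so small that the product $C_1\eta C_\epsilon$ (where $C_1$ is the constant from the H\"older step) is strictly less than $\tfrac{1}{2}$, the absorption yields
\[\int_\Omega\varphi^{2k+3}\psi\leq\tilde\epsilon\int_\Omega\varphi^k\psi|\nabla\varphi|^2+\tilde\epsilon\int_\Omega\varphi\frac{|\nabla\psi|^4}{\psi^3}+C(\tilde\epsilon,L,k)\int_\Omega\varphi\psi\]
with $\tilde\epsilon>0$ arbitrarily small. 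Substituting this back into the bound for $\int_\Omega\varphi^\beta\psi$ and once again choosing the small parameters appropriately produces the analogous inequality for $\int_\Omega\varphi^\beta\psi$ with coefficients on both gradient terms as small as desired; taking those coefficients equal to $1$ delivers \eqref{3.10}. The complementary range $\beta\in[1,k+2]$ is then dispatched by H\"older interpolation $\int_\Omega\varphi^\beta\psi\leq(\int_\Omega\varphi^{k+2}\psi)^a(\int_\Omega\varphi\psi)^{1-a}$ with $a=(\beta-1)/(k+1)\in[0,1]$, combined with Young's inequality, which reduces matters to the already treated value $\beta=k+2$.
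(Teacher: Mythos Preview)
Your proposal is correct and follows essentially the same route as the paper: H\"older to reduce $\int_\Omega\varphi^\beta\psi$ to $\|\varphi^{(k+2)/2}\sqrt\psi\|_{L^r}^2$, Lemma~\ref{lemma2.8} to expose $E=\int_\Omega\varphi^{k+2}\psi^{-1}|\nabla\psi|^2$, Young to convert $E$ into $\int_\Omega\varphi\,|\nabla\psi|^4/\psi^3+\int_\Omega\varphi^{2k+3}\psi$, and then an absorption to close the self-referential loop (the paper absorbs $E$ rather than $\int_\Omega\varphi^{2k+3}\psi$, but this is the same circular dependency resolved from the other side). For $\beta\in[1,k+2]$ the paper uses the pointwise bound $\varphi^\beta\le\varphi^{k+2}+\varphi$, which is equivalent to your H\"older interpolation reduction.
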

\begin{proof}
    Let
 $\theta:=\frac{1}{k+3-\beta}$ and $ \theta_*:=\frac{1}{\beta-k-2}.$
   Then  \begin{align}\label{3.11}
        1\leq\theta<3
    \end{align}
   can be warranted by $\beta \in [k+2,k+\frac{8}{3})$.
Consequently, by the H\"{o}lder inequality and applying  Lemma \ref{lemma2.8} to $\eta=\frac12$, we conclude that
there exists
$c_1=c_1(\beta,k,L)$ such that for all $t\in (0,T_{max,\varepsilon})$,
\begin{align}\label{3.12}
    \int_{\Omega}\varphi^{\beta}\psi
    &=\int_{\Omega}(\varphi^{\frac{k+2}{2}}\psi^{\frac{1}{2}})^2\varphi^{\beta-k-2} \nonumber
    \\
    &\leq\|\varphi^{\frac{k+2}{2}}\psi^{\frac{1}{2}}\|^2_{L^{2\theta}(\Omega)}\cdot\|\varphi^{\beta-k-2}\|_{L^{\theta_*}(\Omega)}\nonumber
    \\
    &\leq L^{\frac{1}{\theta_*}}\|\varphi^{\frac{k+2}{2}}\psi^{\frac{1}{2}}\|^2_{L^{2\theta}(\Omega)}\nonumber
    \\
    &\leq \frac{1}{2}\int_{\Omega}\varphi^{k}\psi|\nabla
    \varphi|^2+\frac{1}{2}
    \int_{\Omega}\varphi^{k+2}\frac{|\nabla
    \psi|^2}{\psi}+c_1\int_{\Omega}\varphi \psi.
\end{align}
Furthermore, let
$\lambda:=-\frac{1}{k}$ and $ \lambda_*:=\frac{1}{k+1}$. Then the restriction  $k\in(-1,-\frac{1}{3})$ warrants that
$     1<\lambda<3$.
 Applying Lemma \ref{lemma2.8} to $p:=k+1, r:=2\lambda, \eta:=\min\{L^{-\frac1{\lambda_*}},1\}$ and Young's inequality once more, we obtain that there
exists positive constant $c_2=c_2(k,\beta,L)$ such that for all $t\in (0,T_{max,\varepsilon})$,
\begin{align}\label{3.13}
    \int_{\Omega}\varphi^{k+2}\frac{|\nabla \psi|^2}{\psi}&\leq
    \frac{1}{2}\int_{\Omega}\varphi\frac{|\nabla
    \psi|^4}{\psi^3}+\frac1{2}\int_{\Omega}\varphi^{2k+3}\psi\nonumber
    \\
    &=\frac{1}{2}\int_{\Omega}\varphi\frac{|\nabla
    \psi|^4}{\psi^3}+\frac1{2}\int_{\Omega}(\varphi^{\frac{k+2}{2}}\psi^{\frac{1}{2}})^2\varphi^{k+1}\nonumber
    \\
    &\leq\frac{1}{2}\int_{\Omega}\varphi\frac{|\nabla
    \psi|^4}{\psi^3}+\frac1{2}\|\varphi^{\frac{k+2}{2}}\psi^{\frac{1}{2}}\|_{L^{2\lambda}(\Omega)}^2\|\varphi^{k+1}\|_{L^{\lambda_*}(\Omega)}\nonumber
    \\
    &\leq\frac{1}{2}\int_{\Omega}\varphi\frac{|\nabla
    \psi|^4}{\psi^3}+\frac{L^{\frac 1{ \lambda_* }}}{2}\|\varphi^{\frac{k+2}{2}}\psi^{\frac{1}{2}}\|_{L^{2\lambda}(\Omega)}^2\nonumber
    \\
     &\leq\frac{1}{2}\int_{\Omega}\varphi\frac{|\nabla
     \psi|^4}{\psi^3}+\frac{1}{2}\int_{\Omega}\varphi^{k+2}\frac{|\nabla
     \psi|^2}{\psi}+\frac{1}{2}\int_{\Omega}\varphi^{k}\psi|\nabla
    \varphi|^2+c_2\int_{\Omega}\varphi\psi.
\end{align}
 Therefore, if $\beta \in [k+2,k+\frac{8}{3})$, it follows from \eqref{3.12} and \eqref{3.13}  that
\begin{align} \label{3.14}
\int_{\Omega}\varphi^{\beta}\psi\leq \eta \int_{\Omega}\varphi^{k}\psi|\nabla \varphi|^2+
\int_{\Omega}\varphi\frac{|\nabla \psi|^4}{\psi^3}+C(\beta, k, L)\int_{\Omega}\varphi\psi
\end{align}
for all $t\in (0,T_{max,\varepsilon})$.
 Apart from that, it is observed that for $\beta \in [1,k+2)$,
$$ \int_{\Omega}\varphi^{\beta}\psi\leq  \int_{\Omega}\varphi^{k+2}\psi+
\int_{\Omega}\varphi\psi\quad \hbox{for all} \,\, t\in (0,T_{max,\varepsilon}).
$$
This together with \eqref{3.14} implies \eqref{3.10} readily.
\end{proof}

The combination of Lemmas \ref{lemma3.3}, Lemma \ref{lemma3.4} and Lemma \ref{lemma3.5} yields the result of follows.
\begin{lemma}\label{lemma3.6}
Assume $\alpha\in\left(\frac{3}{2},\frac{19}{12}\right)$ and
$K>0$ be parameter such that property \eqref{1.6} is valid. Then there exist $p_0>\frac{3}{2}$ and $C=C(K)>0$ such that
\begin{align}\label{3.15}
    \int_{\Omega}u_{\varepsilon}^{p_0}(\cdot,t)\leq C\qquad for\;all\; t\in (0,T_{max,\varepsilon}).
\end{align}

\end{lemma}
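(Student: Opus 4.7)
The plan is to derive a differential inequality for $y(t):=\int_\Omega u_\varepsilon^{p_0}(\cdot,t)$ with $p_0\in(\tfrac{3}{2},\,\tfrac{14}{3}-2\alpha)$, integrate it in time, and bootstrap via the space-time bounds from Lemmas~\ref{lemma3.3} and \ref{lemma3.4} together with the functional inequality of Lemma~\ref{lemma3.5}. The interval for $p_0$ is non-empty precisely when $\alpha<\tfrac{19}{12}$, which is what will force the threshold in the hypothesis. I would first test the first equation of \eqref{2.5} by $u_\varepsilon^{p_0-1}$ to obtain
\begin{align*}
\frac{1}{p_0}\frac{d}{dt}\int_\Omega u_\varepsilon^{p_0}
 +(p_0-1)\int_\Omega u_\varepsilon^{p_0-1}v_\varepsilon|\nabla u_\varepsilon|^2
 =\chi(p_0-1)\int_\Omega u_\varepsilon^{p_0+\alpha-2}v_\varepsilon\nabla v_\varepsilon\cdot\nabla u_\varepsilon
 +\ell\int_\Omega u_\varepsilon^{p_0}v_\varepsilon,
\end{align*}
and apply Young's inequality to the chemotactic term so as to absorb half of the dissipation, leaving a residue controlled by a multiple of $\int_\Omega u_\varepsilon^{p_0+2\alpha-3}v_\varepsilon|\nabla v_\varepsilon|^2$.

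A second Young step based on the factorization $u_\varepsilon^{p_0+2\alpha-3}v_\varepsilon|\nabla v_\varepsilon|^2=\bigl(u_\varepsilon^{1/2}|\nabla v_\varepsilon|^2 v_\varepsilon^{-3/2}\bigr)\cdot\bigl(u_\varepsilon^{p_0+2\alpha-7/2}v_\varepsilon^{5/2}\bigr)$, combined with the pointwise bound $v_\varepsilon^5\leq\|v_0\|_{L^\infty}^4 v_\varepsilon$ provided by \eqref{2.8}, then yields
\begin{align*}
\int_\Omega u_\varepsilon^{p_0+2\alpha-3}v_\varepsilon|\nabla v_\varepsilon|^2
\leq\eta\int_\Omega u_\varepsilon\frac{|\nabla v_\varepsilon|^4}{v_\varepsilon^3}
 +C(\eta)\int_\Omega u_\varepsilon^{2p_0+4\alpha-7}v_\varepsilon
\end{align*}
for any $\eta>0$. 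After these two absorptions both $\int_\Omega u_\varepsilon^{2p_0+4\alpha-7}v_\varepsilon$ and the source contribution $\ell\int_\Omega u_\varepsilon^{p_0}v_\varepsilon$ are integrals of the form $\int_\Omega u_\varepsilon^\beta v_\varepsilon$, and I would dominate each via Lemma~\ref{lemma3.5} (with $\varphi=u_\varepsilon$, $\psi=v_\varepsilon$, and $L$ supplied by the mass bound \eqref{2.9}) by a sum of integrands of the form $u_\varepsilon^k v_\varepsilon|\nabla u_\varepsilon|^2$, $u_\varepsilon|\nabla v_\varepsilon|^4/v_\varepsilon^3$ and $u_\varepsilon v_\varepsilon$.

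The crux is the exponent accounting: the parameter $k$ must lie simultaneously in $(-1,-\tfrac{1}{3})$ (so that Lemma~\ref{lemma3.5} applies) and in $[1-\alpha,0]$ (so that the produced dissipation $\int_\Omega u_\varepsilon^k v_\varepsilon|\nabla u_\varepsilon|^2$ is uniformly space-time controlled via Lemma~\ref{lemma3.4}), while the larger $\beta$-value, $2p_0+4\alpha-7$, must satisfy $\beta<k+\tfrac{8}{3}$. Letting $k\to-\tfrac{1}{3}^-$, this binding constraint reduces to $p_0<\tfrac{14}{3}-2\alpha$; demanding $p_0>\tfrac{3}{2}$ therefore forces $\alpha<\tfrac{19}{12}$, whereas the requirement $k\geq1-\alpha$ is automatic from $\alpha>\tfrac{4}{3}$, and the analogous admissibility check for $\beta=p_0$ is much weaker. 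Once $p_0$ and $k$ are fixed in this way, integration of the resulting differential inequality over $(0,t)$ together with the already proven bounds of Lemmas~\ref{lemma3.4} and \ref{lemma3.3} and estimate \eqref{2.10} produces the claimed uniform bound on $\int_\Omega u_\varepsilon^{p_0}(\cdot,t)$.

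The main obstacle is the joint compatibility of the above chain of exponent conditions: the Young factorization of $v_\varepsilon|\nabla v_\varepsilon|^2$ must produce exactly the power $v_\varepsilon^5$ (so that the $L^\infty$-bound \eqref{2.8} collapses it onto the form $u_\varepsilon^\beta v_\varepsilon$ that Lemma~\ref{lemma3.5} accepts), and the three distinct exponents $p_0+2\alpha-3$, $p_0+2\alpha-\tfrac{7}{2}$ and $2p_0+4\alpha-7$ must all lie in the admissible ranges of the respective Young and functional inequalities; the value $\tfrac{19}{12}$ is precisely the point at which these constraints first become mutually incompatible.
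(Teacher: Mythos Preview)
Your proposal is correct and follows essentially the same route as the paper: test by $u_\varepsilon^{p_0-1}$, apply Young's inequality twice to reduce the chemotactic residue to $\int_\Omega u_\varepsilon^{2p_0+4\alpha-7}v_\varepsilon$, invoke Lemma~\ref{lemma3.5} with a suitable $k\in(-1,-\tfrac13)\cap[1-\alpha,0]$, and integrate in time using Lemmas~\ref{lemma3.3}, \ref{lemma3.4} and \eqref{2.10}. The only cosmetic difference is that the paper fixes explicit values $\delta:=\min\{\tfrac12(\alpha-\tfrac43),\tfrac{19}{12}-\alpha,\tfrac12\}$, $p_0:=\tfrac32+\tfrac{\delta}{2}$, $k:=-\tfrac13-\delta$, and handles the reaction term $\ell\int_\Omega u_\varepsilon^{p_0}v_\varepsilon$ via the crude bound $u_\varepsilon^{p_0}\le u_\varepsilon^{5/3}+u_\varepsilon$ together with Lemma~\ref{lemma3.2}, whereas you treat it by a second application of Lemma~\ref{lemma3.5}; your variant is in fact slightly cleaner.
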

\begin{proof}According to \eqref{2.9}, there exists $c_1=c_1(K)>0$ such that
    \begin{align*}
        \int_{\Omega}u_{\varepsilon}(\cdot,t)\leq c_1\qquad for\; all\; t\in (0,T_{max,\varepsilon}).
    \end{align*}

Let $ \delta:=\min\{\frac 12 (\alpha-\frac{4}{3}) ,\frac{19}{12}-\alpha,\frac12 \}>0$ and
$ p_0:=\frac{3}{2}+  \frac{\delta}2$.
Then multiplying the first equation in \eqref{2.5} by $u_{\varepsilon}^{p_0-1}$ and by the Young inequality, we infer that
\begin{align}\label{3.17}
      & \frac{1}{p_0}\frac{d}{dt}\int_{\Omega}u_{\varepsilon}^{p_0}+\frac{p_0-1}{2}\int_{\Omega}u_{\varepsilon}^{p_0-1}v_{\varepsilon}|\nabla
    u_{\varepsilon}|^2\nonumber\\
    &\leq \frac{(1-p_0)\chi^2}{2}\int_{\Omega}u_{\varepsilon}^{p_0+2\alpha-3}v_{\varepsilon}|\nabla
    v_{\varepsilon}|^2+\ell\int_{\Omega}u_{\varepsilon}^{p_0}v_{\varepsilon}\nonumber
    \\
    &\leq \frac{(1-p_0)\chi^2\|v_0\|_{L^{\infty}}^4}{2}\int_{\Omega}u_{\varepsilon}^{2p_0+4\alpha-7}v_{\varepsilon}+
    \int_{\Omega}u_{\varepsilon}\frac{|\nabla
    v_{\varepsilon}|^4}{v_{\varepsilon}^3}+\ell\int_{\Omega}u_{\varepsilon}^{\frac 53}v_{\varepsilon}
    +\ell\int_{\Omega} u_{\varepsilon}v_{\varepsilon}.
\end{align}

 Let $k:=-\frac13-\delta $. Then $k\in (-1,-\frac13)$ due to the fact $\delta<\frac 23$. In addition,  by the definition of $\delta$, we can see that $k\in (1-\alpha,0)$. Therefore  $k\in (1-\alpha,0)\bigcap (-1,-\frac13)$,  and thus
 \begin{align}\label{3.18}
\int_0^{T_{max,\varepsilon}}\int_{\Omega}u_{\varepsilon}^{k}v_{\varepsilon}|\nabla u_{\varepsilon}|^2\leq c_1(K,T_{max,\varepsilon})
\end{align}
for some  $c_1(K,T_{max,\varepsilon})>0$ by   Lemma \ref{lemma3.4}.

 On other hand, according to the definitions of $\delta$ and $p_0$, we have  $\delta<\frac{19}{12}-\alpha$ and thereby  $2p_0+4\alpha-7< \frac 73-\delta =k+\frac 83$.
 Apart from that, it is claimed that
  \begin{align}\label{3.19}
  2p_0+4\alpha-7>1.
  \end{align}
  Indeed, according to the definition of  $p_0$,  it is easy to see that
  $ 2p_0+4\alpha-8=\delta+4\alpha -5>0$  due to $\alpha>\frac 32$.

 Therefore applying Lemma \ref{lemma3.5} to $\beta:=2p_0+4\alpha-7$,   we can conclude that
   there exists $c_2 >0$ such that
\begin{align}\label{3.21}
   \frac{(1-p_0)\chi^2\|v_0\|_{L^{\infty}}^4}{2}\int_{\Omega}u_{\varepsilon}^{2p_0+4\alpha-7}v_{\varepsilon}\leq
    \int_{\Omega}u_{\varepsilon}^{k}v_{\varepsilon}|\nabla u_{\varepsilon}|^2+\int_{\Omega}u_{\varepsilon}\frac{|\nabla
    v_{\varepsilon}|^4}{v_{\varepsilon}^3}+c_2\int_{\Omega}u_{\varepsilon}v_{\varepsilon}
\end{align}
for  $t\in (0,T_{max,\varepsilon})$. Combining \eqref{3.21}, \eqref{3.17}  with  \eqref{3.18} and  integrating the results over time, together with  \eqref{3.5}, \eqref{1.6}, \eqref{2.18} and \eqref{2.10},  we have
\begin{align}\label{3.22}
     & \frac{1}{p_0}\int_{\Omega}u_{\varepsilon}^{p_0} +\frac{p_0-1}{2}\int_0^{t}\int_{\Omega}u_{\varepsilon}^{p_0-1}v_{\varepsilon}|\nabla
     u_{\varepsilon}|^2 \nonumber
    \\
    &\leq \int_0^{t}\int_{\Omega}u_{\varepsilon}^{k}v_{\varepsilon}|\nabla
    u_{\varepsilon}|^2+2\int_0^{t}\int_{\Omega}u_{\varepsilon}\frac{|\nabla
    v_{\varepsilon}|^4}{v_{\varepsilon}^3}+c_2\int_0^{t}\int_{\Omega}u_{\varepsilon}v_{\varepsilon}+
    \int_{\Omega}(u_{0}+1)^{p_0}+\int_0^{t}\int_{\Omega}v_{\varepsilon}\nonumber
    \\
    &\leq C(K,T_{max,\varepsilon})
\end{align}
for all $t\in (0,T_{max,\varepsilon})$, and thus complete the proof.
 \end{proof}

\subsection{ $L^{p}$-estimates of $u_{\varepsilon}$ for all $p> 1$}

The aim of this subsection is to achieve  $L^p$-bounds of $u_{\varepsilon}$ for all $p> 1$.  Of crucial importance to our approach in this direction is to refine \eqref{3.10} in Lemma \ref{lemma3.5}, which involves the term $\int_{\Omega}\varphi\frac{|\nabla
    \psi|^q}{\psi^{q-1}}$ instead of $\int_{\Omega}\varphi\frac{|\nabla
    \psi|^4}{\psi^{3}}$,  when the enhanced integrability properties of  $\varphi$ is taken into account.
\begin{lemma}\label{lemma3.7}
For $L>0,p_0>\frac{3}{2}$, $p>1$, $q>2+\frac{3p}{p_0}$, $\beta\in [1,\frac{2p_0}{3}+p+1)$
and any $\eta\in(0,1)$, there exists $C=C(\eta, L,p,\beta
 )>0$ such that
whenever  $\int_{\Omega}\varphi^{p_0}\leq L$,
  \begin{align}\label{3.25}
    \int_{\Omega}\varphi^{\beta} \psi\leq \eta \int_{\Omega}\varphi^{p-1}\psi|\nabla
    \varphi|^2+\eta \int_{\Omega}\varphi\frac{|\nabla  \psi|^q}{{\psi}^{q-1}}+C\int_{\Omega}\varphi \psi
\end{align} holds for   all $\varphi\in C^1(\overline \Omega)$ and $\psi \in C^1(\overline \Omega)$ fulfilling $\varphi> 0$ and $\psi>0$ in $\overline \Omega$.
\end{lemma}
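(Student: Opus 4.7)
The plan is to emulate the two-stage proof of Lemma \ref{lemma3.5}: first use Hölder together with Lemma \ref{lemma2.8} to reduce $\int_\Omega \varphi^\beta \psi$ to the quadratic object $\int_\Omega \varphi^{p+1}\psi^{-1}|\nabla \psi|^2$, then trade this for the desired $q$-th order quantity $\int_\Omega \varphi |\nabla \psi|^q \psi^{1-q}$ via Young's inequality combined with a self-absorption step. The stronger hypothesis $\int_\Omega \varphi^{p_0}\leq L$ (in place of the $L^1$ bound used in Lemma \ref{lemma3.5}) is precisely what permits the enlarged admissible range $\beta<\tfrac{2p_0}{3}+p+1$.

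After reducing the easy range $\beta\in[1,p+1]$ to the case $\beta=p+1$ via $\varphi^\beta\leq 1+\varphi^{p+1}$, I would treat $\beta\in[p+1,\tfrac{2p_0}{3}+p+1)$ by decomposing
\begin{equation*}
\int_\Omega \varphi^\beta\psi=\int_\Omega \bigl(\varphi^{(p+1)/2}\sqrt{\psi}\bigr)^2\,\varphi^{\beta-p-1}
\end{equation*}
and applying Hölder's inequality with exponent $\theta_*=\tfrac{p_0}{\beta-p-1}$, so that $\|\varphi^{\beta-p-1}\|_{L^{\theta_*}}\leq L^{(\beta-p-1)/p_0}$. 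The conjugate exponent $\theta=\tfrac{p_0}{p_0-(\beta-p-1)}$ satisfies $\theta<3$ precisely because $\beta<\tfrac{2p_0}{3}+p+1$, so Lemma \ref{lemma2.8} applies with $r=2\theta\in(2,6)$. Using the $L^{p_0}$ bound once more to control $\bigl(\int_\Omega \varphi\bigr)^p$, this stage yields
\begin{equation*}
\int_\Omega \varphi^\beta\psi\leq \eta_1\int_\Omega \varphi^{p-1}\psi|\nabla\varphi|^2+\eta_1\int_\Omega \varphi^{p+1}\psi^{-1}|\nabla\psi|^2+C_1(\eta_1)\int_\Omega \varphi\psi
\end{equation*}
for any $\eta_1\in(0,1)$ at our disposal.

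The second stage upgrades the middle term. Factorizing
\begin{equation*}
\varphi^{p+1}\psi^{-1}|\nabla\psi|^2=\bigl(\varphi\psi^{1-q}|\nabla\psi|^q\bigr)^{2/q}\cdot\bigl(\varphi^{p+1-2/q}\psi^{1-2/q}\bigr)
\end{equation*}
and applying Young's inequality with exponents $q/2$ and $q/(q-2)$ yields
\begin{equation*}
\int_\Omega \varphi^{p+1}\psi^{-1}|\nabla\psi|^2\leq\epsilon\int_\Omega \varphi\frac{|\nabla\psi|^q}{\psi^{q-1}}+C(\epsilon)\int_\Omega \varphi^{s}\psi,\qquad s:=p+1+\frac{2p}{q-2}.
\end{equation*}
The hypothesis $q>2+\tfrac{3p}{p_0}$ is exactly what forces $s<p+1+\tfrac{2p_0}{3}$, so the first-stage argument applies verbatim with $\beta$ replaced by $s$, reintroducing $\int_\Omega \varphi^{p+1}\psi^{-1}|\nabla\psi|^2$ on the right with a coefficient of the form $C(\epsilon)\eta_2$. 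Choosing $\eta_2$ small enough that $C(\epsilon)\eta_2<\tfrac12$ allows this term to be absorbed into the left-hand side, and plugging the resulting bound back into the first-stage inequality and relabeling the small parameters produces \eqref{3.25}.

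The principal obstacle is this self-absorption closure: one must carefully track the product $C(\epsilon)\eta_2$ arising from composing two Young-type inequalities so that the implicit estimate of the form $X\leq\alpha X+\cdots$ can be closed with $\alpha<1$. The strict inequality $q>2+\tfrac{3p}{p_0}$ is essential here, since it ensures that $s$ lies strictly inside the admissible $\beta$-range of the first stage, keeping the induced constants finite.
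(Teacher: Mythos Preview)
Your argument is correct and follows the same overall architecture as the paper: a first stage combining H\"older (with the $L^{p_0}$ bound) and Lemma \ref{lemma2.8} to produce the quadratic term $\int_\Omega \varphi^{p+1}\psi^{-1}|\nabla\psi|^2$, and a second stage upgrading this to the $q$-th order term via Young's inequality, which throws off $\int_\Omega \varphi^{s}\psi$ with $s=\tfrac{qp+q-2}{q-2}=p+1+\tfrac{2p}{q-2}$.

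The only notable difference is in how the self-absorption is closed. You re-apply the first stage to $\int_\Omega \varphi^{s}\psi$, reproduce the intermediate quantity $B=\int_\Omega \varphi^{p+1}\psi^{-1}|\nabla\psi|^2$, and absorb $B$ into itself. The paper instead restricts first to the range $\beta\in[s,\tfrac{2p_0}{3}+p+1)$, so that $s\leq\beta$ and one can write $\varphi^{s}\psi\leq \varphi^{\beta}\psi+\varphi\psi$ and absorb $\tfrac12\int_\Omega \varphi^{\beta}\psi$ directly into the left-hand side; the remaining range $\beta\in[1,s)$ is then handled by $\varphi^{\beta}\leq \varphi^{s}+\varphi$. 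This avoids one round of re-invoking Lemma \ref{lemma2.8} and sidesteps the coefficient-tracking you flag as the principal obstacle. Both closures work; the paper's is just a little more economical.
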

\begin{proof}
    It is observed that the restriction $q>2+\frac{3p}{p_0}$ ensures that
    $ \frac{qp+q-2}{q-2}<\frac{2p_0}{3}+p+1.$
    Now we firstly verify \eqref{3.25} in the case
        \begin{align}\label{3.26}
        \beta \in \left[\frac{qp+q-2}{q-2},\frac{2p_0}{3}+p+1\right).
    \end{align}
    To this end, let
 $$\rho:=\frac{p_0}{p_0-\beta+p+1}\quad and \quad \rho_*:=\frac{p_0}{\beta-p-1}.$$
    Due to \eqref{3.26}, it follows that
    \begin{align}\label{3.28}
        \rho\geq1\quad and\quad 1<2\rho<6.
    \end{align}
Hence, thanks to  $\int_{\Omega}\varphi^{p_0}\leq L$ for $p_0>\frac{3}{2}$, the H\"{o}lder inequality, the Young inequality and Lemma \ref{lemma2.8},  we deduce that for any $\eta>0$, there exists
positive constant  $c_1=c_1(\eta,p,\beta,L)$  such that
\begin{align*}
    \int_{\Omega}\varphi^{\beta}\psi&=\int_{\Omega}(\varphi^{\frac{p+1}{2}}\psi^{\frac{1}{2}})^2\varphi^{\beta-p-1}\nonumber
    \\
    &\leq\|\varphi^{\frac{p+1}{2}}\psi^{\frac{1}{2}}\|^2_{L^{2\rho}(\Omega)}\cdot\|\varphi^{\beta-p-1}\|_{L^{\rho_*}(\Omega)}\nonumber
    \\
    &=\|\varphi^{\frac{p+1}{2}}\psi^{\frac{1}{2}}\|^2_{L^{2\rho}(\Omega)}\cdot\|\varphi\|_{L^{p_0}(\Omega)}^{\beta-p-1}\nonumber
    \\
    &\leq L^{\frac {\beta-p-1}{p_0}}\|\varphi^{\frac{p+1}{2}}\psi^{\frac{1}{2}}\|^2_{L^{2\rho}(\Omega)}\nonumber
    \\
    &\leq \frac{\eta}{2}\int_{\Omega}\varphi^{p-1}\psi|\nabla
    \varphi|^2+\frac{\eta}{2}\int_{\Omega}\varphi^{p+1}\frac{|\nabla
    \psi|^2}{\psi}+c_1\int_{\Omega}\varphi\psi\nonumber
    \\
    &\leq \frac{\eta}{2}\int_{\Omega}\varphi^{p-1}\psi|\nabla
    \varphi|^2+\frac{\eta}{2}\int_{\Omega}\varphi\frac{|\nabla
    \psi|^q}{\psi^{q-1}}+\frac 12\int_{\Omega}\varphi^{\frac{qp+q-2}{q-2}}\psi+c_1\int_{\Omega}\varphi\psi\nonumber
     \\
    &\leq \frac{\eta}{2}\int_{\Omega}\varphi^{p-1}\psi|\nabla
    \varphi|^2+\frac{\eta}{2}\int_{\Omega}\varphi\frac{|\nabla
    \psi|^q}{\psi^{q-1}}+
    \frac{1}{2}\int_{\Omega}\varphi^{\beta}\psi+
    (c_1+1)\int_{\Omega}\varphi\psi.
\end{align*}
 and thus
\begin{align}\label{3.29}
 \int_{\Omega}\varphi^{\beta}\psi\leq \eta\int_{\Omega}\varphi^{p-1}\psi|\nabla
\varphi|^2+\eta\int_{\Omega}\varphi\frac{|\nabla
\psi|^q}{\psi^{q-1}}+2 (c_1+1) \int_{\Omega}\varphi\psi.
\end{align}
Furthermore, it is observed that  for $\beta \in [1, \frac{qp+q-2}{q-2})$, we have $$\int_{\Omega}\varphi^{\beta}\psi\leq
    \int_{\Omega}\varphi^{\frac{qp+q-2}{q-2}}\psi+\int_{\Omega}\varphi\psi, $$
which along with \eqref{3.29}  complete the proof readily.
\end{proof}

We are now  in the position to achieve $L^p$-estimates for $u_{\varepsilon}$ for any $p>1$
 through tracing the evolution of $ \mathcal{ H}(t):=
 \int_{\Omega}u_{\varepsilon}^{p}+\int_{\Omega}\frac{|\nabla
   v_{\varepsilon}|^q}{v_{\varepsilon}^{q-1}}$,  thanks to the $L^{p_0}$-estimates of $u_{\varepsilon}$ with some $p_0>\frac{3}{2}$ established in Lemma \ref{lemma3.6}.
\begin{lemma}\label{lemma3.8}
Let $\Omega \subset \mathbb R^3$ and $K>0$ with the property that \eqref{1.6} holds. Supposed that
$\alpha\in\left(\frac{3}{2},\frac{19}{12}\right)$  and $p>1$, there exists  $C=C( K, p)>0$ such that for all
 $ t\in (0,T_{max,\varepsilon})$,
\begin{align}\label{3.30}
   \int_{\Omega}u_{\varepsilon}^{p}\leq C\quad \hbox{and}\quad
   \int_0^{T_{max,\varepsilon}}\int_{\Omega}u_{\varepsilon}^{p-1}v_{\varepsilon}|\nabla u_{\varepsilon}|^2\leq C
\end{align}
as well as
\begin{align}\label{3.30-1}
   \int_0^{T_{max,\varepsilon}}\int_{\Omega}u_{\varepsilon}^{p+1}v_{\varepsilon}\leq C.
\end{align}
\end{lemma}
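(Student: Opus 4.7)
The plan is to track the evolution of the coupled functional
\begin{align*}
\mathcal{H}_\varepsilon(t) := \int_\Omega u_\varepsilon^p(\cdot,t) + \kappa \int_\Omega \frac{|\nabla v_\varepsilon(\cdot,t)|^q}{v_\varepsilon^{q-1}(\cdot,t)}
\end{align*}
for a suitably large exponent $q$ and a small weight $\kappa > 0$. Testing the first equation of \eqref{2.5} with $u_\varepsilon^{p-1}$ and applying Young's inequality on the cross term gives
\begin{align*}
\frac{1}{p}\frac{d}{dt}\int_\Omega u_\varepsilon^p + \frac{p-1}{2}\int_\Omega u_\varepsilon^{p-1}v_\varepsilon|\nabla u_\varepsilon|^2 \leq c_1\int_\Omega u_\varepsilon^{p+2\alpha-3}v_\varepsilon|\nabla v_\varepsilon|^2 + \ell \int_\Omega u_\varepsilon^p v_\varepsilon,
\end{align*}
while \eqref{2.29} of Lemma \ref{lemma2.7} furnishes the complementary identity for the $v_\varepsilon$-part with dissipation $\Gamma(q)\int_\Omega u_\varepsilon\frac{|\nabla v_\varepsilon|^q}{v_\varepsilon^{q-1}}$ and unfavorable integral $\frac{1}{\Gamma(q)}\int_\Omega v_\varepsilon(u_\varepsilon^{(q+2)/2}+1)$.

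Fixing the exponent $p_0 > \tfrac{3}{2}$ supplied by Lemma \ref{lemma3.6}, I observe that for every $p > 1$ the window
\begin{align*}
2 + \tfrac{3p}{p_0} \;<\; q \;<\; \tfrac{4p_0}{3} + 2p
\end{align*}
is nonempty, since its width $\tfrac{4p_0-6}{3} + \tfrac{p(2p_0-3)}{p_0}$ is strictly positive under $p_0 > \tfrac{3}{2}$. Choosing $q$ in this window, and large enough to place the auxiliary exponent $\beta_1 := \frac{(p+2\alpha-3)q - 2}{q-2}$, which tends to $p + 2\alpha - 3 < p + \tfrac{5}{6}$ as $q \to \infty$, into the admissibility interval $[1,\tfrac{2p_0}{3}+p+1)$ of Lemma \ref{lemma3.7}, I would split the cross term with Young's inequality of exponents $\tfrac{q}{2}$ and $\tfrac{q}{q-2}$, using \eqref{2.8} to trade the surplus power of $v_\varepsilon$ for a constant factor:
\begin{align*}
c_1\int_\Omega u_\varepsilon^{p+2\alpha-3}v_\varepsilon|\nabla v_\varepsilon|^2 \leq \eta\int_\Omega u_\varepsilon\frac{|\nabla v_\varepsilon|^q}{v_\varepsilon^{q-1}} + C(\eta)\int_\Omega u_\varepsilon^{\beta_1}v_\varepsilon.
\end{align*}

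Selecting $\kappa$ so that $\kappa\Gamma(q) > 2\eta$ and adding the two resulting differential inequalities, I would then apply Lemma \ref{lemma3.7} three times, with $\beta \in \{\beta_1,\, (q+2)/2,\, p\}$, each of which falls in the admissible range by the choice of $q$, to dominate the surviving terms $\int_\Omega u_\varepsilon^\beta v_\varepsilon$ by arbitrarily small multiples of the dissipation $\int_\Omega u_\varepsilon^{p-1}v_\varepsilon|\nabla u_\varepsilon|^2 + \int_\Omega u_\varepsilon\tfrac{|\nabla v_\varepsilon|^q}{v_\varepsilon^{q-1}}$ plus multiples of $\int_\Omega u_\varepsilon v_\varepsilon$. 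After absorption, this yields
\begin{align*}
\frac{d}{dt}\mathcal{H}_\varepsilon(t) + \frac{p-1}{4}\int_\Omega u_\varepsilon^{p-1}v_\varepsilon|\nabla u_\varepsilon|^2 + \frac{\kappa\Gamma(q)}{2}\int_\Omega u_\varepsilon\frac{|\nabla v_\varepsilon|^q}{v_\varepsilon^{q-1}} \leq c_2\int_\Omega u_\varepsilon v_\varepsilon + c_2\int_\Omega v_\varepsilon,
\end{align*}
and an integration over $(0,t)$, combined with \eqref{2.10}, \eqref{2.18} and \eqref{1.6}, delivers both assertions of \eqref{3.30}. The bound \eqref{3.30-1} then follows from one further application of Lemma \ref{lemma3.7} with $\beta = p+1$, which trivially lies in $[1,\tfrac{2p_0}{3}+p+1)$.

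The main obstacle is the simultaneous reconciliation of the three admissibility windows for $\beta_1$, $(q+2)/2$ and $p$ demanded by Lemma \ref{lemma3.7}: the upper constraint $q < \tfrac{4p_0}{3}+2p$ (coming from $\tfrac{q+2}{2} < \tfrac{2p_0}{3}+p+1$) competes directly with the lower constraint $q > 2 + \tfrac{3p}{p_0}$ (required by Lemma \ref{lemma3.7} itself). It is precisely the strict inequality $p_0 > \tfrac{3}{2}$ secured by Lemma \ref{lemma3.6} that keeps this window open for all $p > 1$, so that the argument covers every $p > 1$ in a single step rather than through iterated bootstrap.
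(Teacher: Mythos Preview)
Your proposal is correct and follows essentially the same strategy as the paper: the coupled functional $\mathcal{H}_\varepsilon$, the window $q\in\bigl(2+\tfrac{3p}{p_0},\,\tfrac{4p_0}{3}+2p\bigr)$, and the repeated invocation of Lemma~\ref{lemma3.7} to absorb all $\int_\Omega u_\varepsilon^\beta v_\varepsilon$ terms into the two dissipative integrals are exactly the paper's ingredients. The only cosmetic difference is that the paper first crudely estimates $u_\varepsilon^{p+2\alpha-3}\le u_\varepsilon^{p+1}+1$ before applying Young with exponents $\tfrac{q}{2},\tfrac{q}{q-2}$, producing the exponent $\tfrac{qp+q-2}{q-2}$ and a leftover $\int_\Omega v_\varepsilon|\nabla v_\varepsilon|^2$ term (handled via \eqref{2.11}); your direct split yields the smaller exponent $\beta_1=\tfrac{(p+2\alpha-3)q-2}{q-2}<\tfrac{qp+q-2}{q-2}$ and avoids that extra term, which is a mild simplification. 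Note also that your admissibility check for $\beta_1$ need not appeal to ``$q$ large enough'': since $\beta_1<\tfrac{qp+q-2}{q-2}$ and the paper's computation \eqref{3.35} shows the latter is already below $\tfrac{2p_0}{3}+p+1$ for every $q$ in the window, your $\beta_1$ is automatically admissible there (and the limit is $p+2\alpha-3<p+\tfrac{1}{6}$, not $p+\tfrac{5}{6}$, though this is immaterial).
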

\begin{proof}
From \eqref{2.29} in Lemma \ref{lemma2.7}, it follows that
    \begin{align}\label{3.31}
    \frac{d}{dt}\int_{\Omega}\frac{|\nabla v_{\varepsilon}|^q}{v_{\varepsilon}^{q-1}}+\Gamma(q)\int_{\Omega}\frac{|\nabla
    v_{\varepsilon}|^{q-2}}{v_{\varepsilon}^{q-3}}|D^2\ln{v_{\varepsilon}}|^2+\Gamma(q)\int_{\Omega}u_{\varepsilon}\frac{|\nabla
    v_{\varepsilon}|^q}{v_{\varepsilon}^{q-1}}\leq \frac{1}{\Gamma(q)}\int_{\Omega}v_{\varepsilon}(u_{\varepsilon}^{\frac{q+2}{2}}+1)
\end{align}
for all $t\in (0,T_{max,\varepsilon})$.

  Multiplying the first equation in \eqref{2.5} by $u_{\varepsilon}^{p-1}$, applying Young's inequality and \eqref{1.6}, there exists
  $c_1>0$ such that
    \begin{align}\label{3.32}
      &\frac{1}p \frac{d}{dt}\int_{\Omega}u_{\varepsilon}^{p}+\frac{p-1}{2}\int_{\Omega}u_{\varepsilon}^{p-1}v_{\varepsilon}|\nabla
      u_{\varepsilon}|^2\nonumber
      \\
      &\leq \frac{p-1}{2}\int_{\Omega}u_{\varepsilon}^{p+2\alpha-3}v_{\varepsilon}|\nabla
      v_{\epsilon}|^2+\ell\int_{\Omega}u_{\varepsilon}^pv_{\varepsilon}
      \\
      &\leq (p-1)\int_{\Omega}u_{\varepsilon}^{p+1}v_{\varepsilon}|\nabla v_{\varepsilon}|^2+(p-1)\int_{\Omega}v_{\varepsilon}|\nabla
      v_{\epsilon}|^2+\ell\int_{\Omega}u_{\varepsilon}^{p+1}v_{\varepsilon}+\ell\int_{\Omega}u_{\varepsilon}v_{\varepsilon}\nonumber
      \\
      &\leq  \frac{\Gamma(q)} 2
      \int_{\Omega}u_{\varepsilon}\frac{|\nabla v_{\varepsilon}|^q}{v_{\varepsilon}^{q-1}}+(p-1)\int_{\Omega}v_{\varepsilon}|\nabla
      v_{\varepsilon}|^2+\ell\int_{\Omega}u_{\varepsilon}^{p+1}v_{\varepsilon}+c_1\int_{\Omega}u_{\varepsilon}^{\frac{qp+q-2}{q-2}}v_{\varepsilon}^{\frac{3q-2}{q-2}}+\ell\int_{\Omega}u_{\varepsilon}v_{\varepsilon}\nonumber
      \\
      &\leq    \frac{\Gamma(q)} 2\int_{\Omega}u_{\varepsilon}\frac{|\nabla v_{\varepsilon}|^q}{v_{\varepsilon}^{q-1}}+(p-1)\int_{\Omega}v_{\varepsilon}|\nabla
      v_{\varepsilon}|^2+\ell\int_{\Omega}u_{\varepsilon}^{p+1}v_{\varepsilon}+
      c_1K^{\frac{2q}{q-2}}\int_{\Omega}u_{\varepsilon}^{\frac{qp+q-2}{q-2}}v_{\varepsilon}+
      \ell\int_{\Omega}u_{\varepsilon}v_{\varepsilon}\nonumber
    \end{align}
    for all $t\in (0,T_{max,\varepsilon})$.

     Noting that  if $p_0>\frac{3}{2}$, then for any  fixed $p>1$, one can find $q>2$ satisfying
\begin{align}\label{3.34}
    q\in \left( 2+\frac{3p}{p_0},\frac{4p_0}{3}+2p \right)
\end{align}
which ensures that
\begin{align}\label{3.35}
    \frac{qp+q-2}{q-2}<\frac{2p_0}{3}+p+1
\end{align}
and
\begin{align}\label{3.36}
  \frac{q+2}{2}<\frac{2p_0}{3}+p+1.
\end{align}
Therefore thanks to \eqref{3.15}, \eqref{3.35} and \eqref{3.36}, the application of Lemma \ref{lemma3.7} to $\eta:=\min\{\frac{\Gamma(q)} 4, \frac{p-1}4\}$  yields
\begin{align}\label{3.37}
   &\ell\int_{\Omega}u_{\varepsilon}^{p+1}v_{\varepsilon}+c_1K^{\frac{2q}{q-2}}\int_{\Omega}u_{\varepsilon}^{\frac{qp+q-2}{q-2}}v_{\varepsilon}+\frac{1}{\Gamma(q)}\int_{\Omega}u_{\varepsilon}^{\frac{q+2}{2}}v_{\varepsilon}\nonumber
   \\
   &\leq\eta\int_{\Omega}u_{\varepsilon}^{p-1}v_{\varepsilon}|\nabla u_{\varepsilon}|^2+\eta\int_{\Omega}u_{\varepsilon}\frac{|\nabla
   v_{\varepsilon}|^q}{v_{\varepsilon}^{q-1}}+C(\eta,K,p)\int_{\Omega}u_{\varepsilon}v_{\varepsilon}
\end{align}
for all $t\in (0,T_{max,\varepsilon})$.

From \eqref{3.31}, \eqref{3.32} and \eqref{3.37}, there exists $C(p, K)>0$ such that
\begin{align}\label{3.38}
   &\frac{d}{dt}\left\{ \frac{1}{p}\int_{\Omega}u_{\varepsilon}^{p}+\int_{\Omega}\frac{|\nabla
   v_{\varepsilon}|^q}{v_{\varepsilon}^{q-1}}\right\}
   +\frac{\Gamma(q)}{4}\int_{\Omega}u_{\varepsilon}\frac{|\nabla
   v_{\varepsilon}|^q}{v_{\varepsilon}^{q-1}}+\frac{p-1}{4}\int_{\Omega}u_{\varepsilon}^{p-1}v_{\varepsilon}|\nabla
   u_{\varepsilon}|^2\nonumber\\
   &\leq C(p,K)\int_{\Omega}\left( v_{\varepsilon}|\nabla v_{\varepsilon}|^2+u_{\varepsilon}v_{\varepsilon}+v_{\varepsilon} \right).
\end{align}
 Integrating \eqref{3.38} with respect to time, we  have
\begin{align*}
    &\int_{\Omega}u_{\varepsilon}^{p}+\int_{\Omega}\frac{|\nabla
    v_{\varepsilon}|^q}{v_{\varepsilon}^{q-1}}+\frac{\Gamma(q)}{4}\int_0^{t}\int_{\Omega}u_{\varepsilon}\frac{|\nabla
    v_{\varepsilon}|^q}{v_{\varepsilon}^{q-1}}+\frac{p-1}{4}\int_0^{t}\int_{\Omega}u_{\varepsilon}^{p-1}v_{\varepsilon}|\nabla
    u_{\varepsilon}|^2
    \\
    &\leq C(p,K)\int_0^{t}\int_{\Omega}\left(v_{\varepsilon}|\nabla v_{\varepsilon}|^2+u_{\varepsilon}v_{\varepsilon}+v_{\varepsilon}\right)
      +\int_{\Omega}(u_{0}+1)^p+\int_{\Omega}\frac{|\nabla v_{0}|^q}{v_0^{q-1}}
\end{align*}
for all $t\in (0,T_{max,\varepsilon})$. Hence  from \eqref{2.11}, \eqref{2.18}, \eqref{1.6} and \eqref{2.10}, it
follows that
\begin{align}\label{3.39}
    \int_{\Omega}u_{\varepsilon}^{p}+\frac{p-1}{2}\int_0^{t}\int_{\Omega}u_{\varepsilon}^{p-1}v_{\varepsilon}|\nabla u_{\varepsilon}|^2\leq C(p,
    K).
\end{align}
Furthermore, \eqref{3.30-1} results from \eqref{3.37}, \eqref{3.39} and \eqref{2.10}, and hence completes the proof.
\end{proof}

As a natural consequence of \eqref{3.30}, the application of standard smoothing estimates for the Neumann heat semigroup directly yields lower bounds for $v_{\varepsilon}$.
\begin{lemma}\label{lemma3.9}
Let $\Omega \subset \mathbb R^3$, $\alpha\in(\frac{3}{2},\frac{19}{12})$ and $K>0$ with the property that \eqref{1.6} holds. Assuming that $T_{max,\varepsilon}<\infty$,  then
there exists  $C=C( K,T_{max,\varepsilon})>0$ such that
\begin{align}\label{3.40}
   v_{\varepsilon}(x,t)\geq C\qquad for\; all\; (x,t)\in \Omega\times(0,T_{max,\varepsilon}).
\end{align}
\end{lemma}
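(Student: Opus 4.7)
\medskip

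\textbf{Proof plan.} The natural approach is to transform the lower bound for $v_\varepsilon$ into an upper bound by introducing $\phi_\varepsilon := -\ln v_\varepsilon$, which is well defined by $v_\varepsilon>0$ in $\overline\Omega\times[0,T_{max,\varepsilon})$. A direct computation based on the second equation of \eqref{2.5} shows that
\begin{align*}
\phi_{\varepsilon t}=\Delta \phi_\varepsilon-|\nabla\phi_\varepsilon|^2+u_\varepsilon\quad\text{in }\Omega\times(0,T_{max,\varepsilon}),
\end{align*}
subject to the inherited homogeneous Neumann condition $\partial_\nu\phi_\varepsilon=0$ on $\partial\Omega$ and to $\phi_\varepsilon(\cdot,0)=-\ln v_0$. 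Discarding the nonpositive term $-|\nabla\phi_\varepsilon|^2$ yields the pointwise differential inequality $\phi_{\varepsilon t}-\Delta\phi_\varepsilon\leq u_\varepsilon$, and so it suffices to bound $\phi_\varepsilon$ from above.

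To this end, I would let $\psi$ denote the unique classical solution of the linear problem $\psi_t=\Delta\psi+u_\varepsilon$ in $\Omega\times(0,T_{max,\varepsilon})$ with $\partial_\nu\psi=0$ on $\partial\Omega$ and $\psi(\cdot,0)=-\ln v_0$. Then $w:=\psi-\phi_\varepsilon$ fulfills $w_t-\Delta w=|\nabla\phi_\varepsilon|^2\geq 0$ with homogeneous Neumann boundary data and $w(\cdot,0)=0$, so that the parabolic comparison principle gives $\phi_\varepsilon\leq\psi$ in $\overline\Omega\times[0,T_{max,\varepsilon})$. Using the variation-of-constants formula with the Neumann heat semigroup $(e^{t\Delta})_{t\geq 0}$, I would write
\begin{align*}
\psi(\cdot,t)=-e^{t\Delta}\ln v_0+\int_0^t e^{(t-s)\Delta}u_\varepsilon(\cdot,s)\,ds,\qquad t\in(0,T_{max,\varepsilon}).
\end{align*}
The first contribution is controlled by $\|\ln v_0\|_{L^\infty(\Omega)}$, which is finite since $v_0\in W^{1,\infty}(\Omega)$ is strictly positive on $\overline\Omega$ together with $\|\nabla\ln v_0\|_{L^\infty(\Omega)}\leq K$ by \eqref{1.6}.

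For the nonlocal term I would invoke the standard $L^p\to L^\infty$ smoothing estimate for the Neumann heat semigroup on the smoothly bounded three-dimensional $\Omega$, namely
\begin{align*}
\|e^{\tau\Delta}f\|_{L^\infty(\Omega)}\leq c_1\bigl(1+\tau^{-\frac{3}{2p}}\bigr)\|f\|_{L^p(\Omega)}\qquad\text{for all }\tau>0,\ f\in L^p(\Omega),
\end{align*}
and pick any $p>\frac{3}{2}$, so that the kernel $\tau^{-3/(2p)}$ is integrable at the origin. Lemma \ref{lemma3.8} provides $\|u_\varepsilon(\cdot,s)\|_{L^p(\Omega)}\leq C(p,K)$ uniformly in $s\in(0,T_{max,\varepsilon})$, whence $\int_0^t\|e^{(t-s)\Delta}u_\varepsilon(\cdot,s)\|_{L^\infty(\Omega)}\,ds$ is bounded by a constant depending only on $K$ and $T_{max,\varepsilon}$. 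Combining everything, $\psi$, and therefore $\phi_\varepsilon$, stays uniformly bounded from above on $\overline\Omega\times[0,T_{max,\varepsilon})$ by some $C(K,T_{max,\varepsilon})$, which translates into $v_\varepsilon\geq e^{-C(K,T_{max,\varepsilon})}$ and proves \eqref{3.40}. No serious obstacle is expected, since each step is a well-established tool; the only point requiring care is to ensure that the $L^p$-bound from Lemma \ref{lemma3.8} is applied with $p>\frac{3}{2}$ in order to make the temporal convolution integral converge on finite time intervals.
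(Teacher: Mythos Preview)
Your proposal is correct and follows essentially the same route as the paper: the paper introduces $w_\varepsilon:=\ln\frac{\|v_0\|_{L^\infty(\Omega)}}{v_\varepsilon}$ (a constant shift of your $\phi_\varepsilon$), drops the term $-|\nabla w_\varepsilon|^2$, and then compares against the heat semigroup exactly as you do, invoking the $L^p\to L^\infty$ smoothing estimate (with the specific choice $p=2$) together with the uniform $L^p$ bound on $u_\varepsilon$ from Lemma~\ref{lemma3.8}. Your explicit introduction of the auxiliary $\psi$ and use of a general $p>\tfrac{3}{2}$ are cosmetic variations on the same argument.
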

\begin{proof}
   Firstly, it is  observed from Lemma \ref{lemma2.1} that $ v_\varepsilon \in C^{2,1}(\overline{\Omega} \times (0, T_{\max,\varepsilon})) $ with $v_\varepsilon>0$ in $\overline{\Omega} \times (0, T_{\max,\varepsilon})$. Defining the transformed variable $w_\varepsilon := \ln \frac{\|v_0\|_{L^\infty(\Omega)}}{v_\varepsilon} \in  C^{2,1}(\overline{\Omega} \times [0, T_{\max,\varepsilon}))$  and invoking \eqref{2.5}, we derive
$$w_{\varepsilon t} = \Delta w_\varepsilon - |\nabla w_\varepsilon|^2 + u_\varepsilon \leq \Delta w_\varepsilon + u_\varepsilon \quad \text{in } \Omega \times (0, T_{\max,\varepsilon}).$$
Using the comparison principle along with known regularization features of the Neumann heat semigroup $(e^{t\Delta})_{t \geq 0}$ on
 $ \Omega $ (\cite{Winkler4}),  one can see that there is
  constant $c_1> 0 $ such that  for all  $ t \in (0, T_{\max,\varepsilon})$,
\begin{align*}
w_\varepsilon (\cdot, t) &\leq e^{t\Delta}  w_{0\varepsilon} + \int_{0}^t e^{(t-s)\Delta} u_\varepsilon (\cdot, s) \, ds \\
&\leq \sup_{x \in \Omega} w_{0\varepsilon}(x) + c_1 \int_{0}^t (1+(t-s)^{-\frac{3}{4}} ) \|u_\varepsilon (\cdot, s)\|_{L^2(\Omega)} \, ds \\
&\leq \sup_{x \in \Omega}  w_{0\varepsilon}(x)  + c_1(T_{\max,\varepsilon}+ 4T^{\frac14}_{\max,\varepsilon})\cdot \sup_{s \in (0, T_{\max,\varepsilon})} \|u_\varepsilon (\cdot, s)\|_{L^2(\Omega)}.
\end{align*}
 This along with   \eqref{3.30} implies that  $w_\varepsilon (\cdot, t)\leq c_2(K,T_{\max,\varepsilon})$ for some $c_2(K,T_{\max,\varepsilon})$
  and thus $v_{\varepsilon}(x,t)\geq \|v_0\|_{L^\infty(\Omega)} e^{-c_2(K,T_{\max,\varepsilon})}$.
\end{proof}

\subsection{Proof of Theorem 1.1 }

On the basis of the result stated by Lemma \ref{lemma3.8}, we further derive the higher regularity properties of the solution components to \eqref{2.5} and then obtain a global continuous weak solution of \eqref{1.4}.

As a consequence of \eqref{3.30}, by leveraging standard heat semigroup estimates,
 one can verify that
 $ \|v_{\varepsilon}(\cdot,t)\|_{W^{1,\infty}(\Omega)}$ is locally bounded for $t\in (0, T_{max,\varepsilon})$.

\begin{lemma}\label{lemma3.10}
Let $\Omega \subset \mathbb R^3$ and $K>0$ with the property that \eqref{1.6} holds. Supposed that
$\alpha\in\left(\frac{3}{2},\frac{19}{12}\right)$, there exists  $C=C( K,p)>0$ such that for all $\varepsilon\in(0,1)$ we have
\begin{align}\label{3.40a}
   \|v_{\varepsilon}(\cdot,t)\|_{W^{1,\infty}(\Omega)}\leq C\qquad for\; all\; t\in (0,T_{max,\varepsilon}).
\end{align}
\end{lemma}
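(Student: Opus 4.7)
The plan is to derive \eqref{3.40a} from the variation-of-constants representation for $v_\varepsilon$ governed by the second equation in \eqref{2.5}, combined with the $L^p$-bounds for $u_\varepsilon$ supplied by Lemma \ref{lemma3.8}. Write
\begin{align*}
v_\varepsilon(\cdot,t)=e^{t\Delta}v_0-\int_0^t e^{(t-s)\Delta}\bigl(u_\varepsilon v_\varepsilon\bigr)(\cdot,s)\,ds,\qquad t\in(0,T_{\max,\varepsilon}),
\end{align*}
where $(e^{\tau\Delta})_{\tau\geq 0}$ denotes the Neumann heat semigroup on $\Omega$. Taking gradients and the $L^\infty$-norm, the task reduces to bounding both terms uniformly in $\varepsilon$ and $t$.

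For the first term, standard smoothing estimates give $\|\nabla e^{t\Delta}v_0\|_{L^\infty(\Omega)}\leq c_0\|\nabla v_0\|_{L^\infty(\Omega)}\leq c_0 K$ via \eqref{1.6}. For the Duhamel term, I will invoke the well-known $L^r$-$L^\infty$ gradient estimate (\textit{cf.} Winkler's smoothing lemmas): there exist $c_1>0$ and $\lambda_1>0$ such that for all $r>3$ and all $w\in L^r(\Omega)$,
\begin{align*}
\bigl\|\nabla e^{\tau\Delta}w\bigr\|_{L^\infty(\Omega)}\leq c_1\bigl(1+\tau^{-\frac12-\frac{3}{2r}}\bigr)e^{-\lambda_1\tau}\|w\|_{L^r(\Omega)},\qquad\tau>0.
\end{align*}
Fixing any $p>3$ (e.g.\ $p=4$), Lemma \ref{lemma3.8} yields $\|u_\varepsilon(\cdot,s)\|_{L^p(\Omega)}\leq C(K,p)$ uniformly in $s\in(0,T_{\max,\varepsilon})$, and in view of \eqref{2.8} we infer $\|u_\varepsilon v_\varepsilon(\cdot,s)\|_{L^p(\Omega)}\leq K\cdot C(K,p)$.

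Combining these ingredients, I estimate
\begin{align*}
\|\nabla v_\varepsilon(\cdot,t)\|_{L^\infty(\Omega)}
&\leq c_0 K+c_1 K\,C(K,p)\int_0^t\bigl(1+(t-s)^{-\frac12-\frac{3}{2p}}\bigr)e^{-\lambda_1(t-s)}\,ds\\
&\leq c_0 K+c_1 K\,C(K,p)\int_0^\infty\bigl(1+\sigma^{-\frac12-\frac{3}{2p}}\bigr)e^{-\lambda_1\sigma}\,d\sigma.
\end{align*}
The choice $p>3$ guarantees $\tfrac12+\tfrac{3}{2p}<1$, so the singularity at $\sigma=0$ is integrable, and the exponential weight makes the resulting integral finite and independent of $t$. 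This gives the desired $t$- and $\varepsilon$-uniform bound on $\|\nabla v_\varepsilon(\cdot,t)\|_{L^\infty(\Omega)}$; together with \eqref{2.8} it yields \eqref{3.40a}.

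The main obstacle, modest as it is, lies in securing the exponential factor $e^{-\lambda_1\tau}$ in the gradient smoothing estimate so that the integral is truly uniform in $t$; without it, one would only obtain a bound growing in time. Once the correct semigroup estimate is cited, the remaining steps amount to inserting the $L^p$-bound from Lemma \ref{lemma3.8} with a suitable $p>3$ and verifying integrability of the kernel at $\sigma=0$.
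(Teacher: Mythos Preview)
Your proof is correct and follows essentially the same route as the paper: variation-of-constants for $v_\varepsilon$, heat-semigroup gradient smoothing from \cite{Winkler4}, and the $L^p$-bound on $u_\varepsilon$ from Lemma~\ref{lemma3.8} with some $p>3$. The paper shifts the semigroup to $e^{t(\Delta-1)}$ (writing the forcing as $u_\varepsilon v_\varepsilon-v_\varepsilon$) to get the factor $e^{-(t-s)}$ explicitly, whereas you use the spectral-gap decay $e^{-\lambda_1\tau}$ of $\nabla e^{\tau\Delta}$ directly; also note the small slip that \eqref{1.6} bounds $\|\nabla\ln v_0\|_{L^\infty}$, so $\|\nabla v_0\|_{L^\infty}\leq\|v_0\|_{L^\infty}\|\nabla\ln v_0\|_{L^\infty}\leq K^2$ rather than $K$.
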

\begin{proof}
    According to Lemma \ref{lemma3.8}, we have the uniform boundedness of $u_{\varepsilon}$ in $L^{\infty}((0,T_{max,\varepsilon});L^p(\Omega))$ for
any $p>1$. Therefore, by invoking the well-established smoothing properties of the Neumann heat semigroup documented in \cite{Winkler4}, there exist $c_1 > 0$ and $c_2 > 0$ such that for any $p>3$
\begin{align}
\|\nabla v_\varepsilon(t)\|_{L^\infty(\Omega)}
&= \left\|\nabla e^{t(\Delta-1)}v_0 - \int_0^t \nabla e^{(t-s)(\Delta-1)} \big\{u_\varepsilon(s)v_\varepsilon(s) - v_\varepsilon(s)\big\} \, ds \right\|_{L^\infty(\Omega)} \nonumber\\
&\leq c_1 \|v_0\|_{W^{1,\infty}(\Omega)} + c_1 \int_0^t \left( 1 + (t-s)^{-\frac12 - \frac{3}{2p}} \right) e^{-(t-s)} \|u_\varepsilon(\cdot,s)v_\varepsilon(\cdot,s) - v_\varepsilon(\cdot,s)\|_{L^p(\Omega)} ds\nonumber\\
&\leq c_1 \|v_0\|_{W^{1,\infty}(\Omega)} + +c_1|\Omega|^{\frac1p} \|v_0\|_{L^\infty(\Omega)}\int_0^t \left( 1 + (t-s)^{-\frac12 - \frac{3}{2p}} \right) e^{-(t-s)} \, ds\nonumber\\
&+c_1 \|v_0\|_{L^\infty(\Omega)}\sup_{ t\in (0,T_{max,\varepsilon})} \|u_\varepsilon(\cdot,t)\|_{L^p(\Omega)} \int_0^t \left( 1 + (t-s)^{-\frac12 - \frac{3}{2p}} \right) e^{-(t-s)} \, ds\nonumber\\
&\leq c_2
\end{align}
for all $t \in (0, T_{\max,\varepsilon})$ and $\varepsilon \in (0,1)$. Then  we finish the proof.
\end{proof}

At this position, we can  proceed to prove  the $L^\infty$-bounds for $u_{\varepsilon}$.
\begin{lemma}\label{lemma3.11}
Let $\alpha\in\left(\frac{3}{2},\frac{19}{12}\right)$. Suppose $T_{max,\varepsilon}<\infty$, then there exists
$C( T_{max,\varepsilon})>0$
 such that for all $\varepsilon\in(0,1)$ we have
\begin{align}\label{3.41}
   \|u_{\varepsilon}(\cdot,t)\|_{L^{\infty}(\Omega)}\leq C( T_{max,\varepsilon}) ~\qquad for\; all\; ~t\in (0,T_{max,\varepsilon}).
\end{align}
\end{lemma}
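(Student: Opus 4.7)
The plan is to derive an $L^\infty$ bound via a Moser-type iteration that leverages all three pieces of information assembled earlier in this section: the uniform $L^p$ bound $\|u_\varepsilon(\cdot,t)\|_{L^p(\Omega)}\le C(K,p)$ for every $p\ge 1$ from Lemma \ref{lemma3.8}, the $W^{1,\infty}$ bound for $v_\varepsilon$ from Lemma \ref{lemma3.10}, and the strictly positive lower bound $v_\varepsilon\ge c_0(K,T_{\max,\varepsilon})>0$ from Lemma \ref{lemma3.9} (the latter is precisely where the finiteness assumption $T_{\max,\varepsilon}<\infty$ is used). With these in hand, the $u_\varepsilon$-equation behaves on $(0,T_{\max,\varepsilon})$ as a (non-uniformly, but effectively lower-bounded) quasilinear parabolic equation in divergence form with bounded drift and $L^\infty_t L^p_x$ data.

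First I would test the first equation in \eqref{2.5} with $u_\varepsilon^{p-1}$ for arbitrary $p\ge 2$, apply Young's inequality to the chemotactic cross term $u_\varepsilon^{p+\alpha-2}v_\varepsilon\nabla u_\varepsilon\cdot\nabla v_\varepsilon$, and absorb half of the diffusion. Using $\|v_\varepsilon\|_{W^{1,\infty}}\le C(K)$, this gives
\begin{align*}
\frac{1}{p}\frac{d}{dt}\int_\Omega u_\varepsilon^p + \frac{p-1}{2}\int_\Omega u_\varepsilon^{p-1}v_\varepsilon|\nabla u_\varepsilon|^2 \le c_1(p,K)\int_\Omega u_\varepsilon^{p+2\alpha-3} + c_2(K)\int_\Omega u_\varepsilon^p.
\end{align*}
The lower bound $v_\varepsilon\ge c_0$ then converts the diffusion term into $\frac{2c_0(p-1)}{(p+1)^2}\|\nabla u_\varepsilon^{(p+1)/2}\|_{L^2(\Omega)}^2$, so that with $w:=u_\varepsilon^{(p+1)/2}$ one obtains a functional inequality of Gagliardo--Nirenberg type, where the exponent $s_p:=\frac{2(p+2\alpha-3)}{p+1}<2$ appearing on the right (recall $\alpha<\tfrac{19}{12}<2$) is strictly below the Sobolev exponent $6$.

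Next I would apply the Gagliardo--Nirenberg inequality to bound $\|w\|_{L^{s_p}}^{s_p}$ in terms of $\|\nabla w\|_{L^2}^{2\theta}\|w\|_{L^{r}}^{s_p(1-\theta)}$ plus a lower-order piece, choosing $r$ so that $\|w\|_{L^r}=\|u_\varepsilon\|_{L^{r(p+1)/2}}^{(p+1)/2}$ is controlled by Lemma \ref{lemma3.8}. The first factor can be absorbed into the diffusive dissipation (after Young with the small power $\theta<1$), yielding an ODI of the form
\begin{align*}
\frac{d}{dt}\int_\Omega u_\varepsilon^p + c_3(p,K,T_{\max,\varepsilon})\|\nabla u_\varepsilon^{(p+1)/2}\|_{L^2(\Omega)}^2 \le c_4(p,K,T_{\max,\varepsilon}),
\end{align*}
and hence, via a further Gagliardo--Nirenberg step coupling $\|u_\varepsilon\|_{L^{2p}}$ to $\|\nabla u_\varepsilon^{(p+1)/2}\|_{L^2}$ and $\|u_\varepsilon\|_{L^p}$, a recursive estimate of Moser--Alikakos type linking $M_{k+1}:=\sup_{t<T_{\max,\varepsilon}}\|u_\varepsilon(\cdot,t)\|_{L^{p_{k+1}}(\Omega)}$ to $M_k$ along the sequence $p_k:=2^k p_0$ with suitable $p_0>3$. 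Verifying that the resulting constants $\Lambda_k$ grow at most polynomially in $k$ while the exponents $\theta_k$ tend to $1$ then gives $\sup_k M_k<\infty$ and delivers \eqref{3.41}.

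The main technical obstacle is the standard but delicate $p$-dependence bookkeeping in the Moser iteration, namely ensuring that the constants $c_1(p,K)$, the dissipation coefficient $\frac{2c_0(p-1)}{(p+1)^2}$, and the Gagliardo--Nirenberg constants combine so that the recursion collapses to a finite limit. A shortcut I would likely execute in practice is to invoke the Aronson--Serrin (or Alikakos--Moser) quasilinear $L^\infty$ estimate directly on the equivalent equation $u_{\varepsilon t}=\tfrac12\nabla\cdot(v_\varepsilon\nabla u_\varepsilon^2)-\chi\nabla\cdot(u_\varepsilon^\alpha v_\varepsilon\nabla v_\varepsilon)+\ell u_\varepsilon v_\varepsilon$, which on $(0,T_{\max,\varepsilon})$ is uniformly parabolic with measurable coefficients in $[c_0,C]$ and $L^\infty_t L^p_x$ lower-order data, avoiding the explicit iteration.
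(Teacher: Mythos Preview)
Your proposal is correct and follows essentially the same approach as the paper: the paper rewrites the $u_\varepsilon$-equation as $u_{\varepsilon t}=\nabla\cdot(v_\varepsilon u_\varepsilon\nabla u_\varepsilon)+\nabla\cdot B_\varepsilon+D_\varepsilon$ with $B_\varepsilon=-\chi u_\varepsilon^\alpha v_\varepsilon\nabla v_\varepsilon$ and $D_\varepsilon=\ell u_\varepsilon v_\varepsilon$, uses Lemmas~\ref{lemma3.8}--\ref{lemma3.10} exactly as you do to check that the diffusion coefficient is bounded below by $c_1(T_{\max,\varepsilon})\xi$ and that $B_\varepsilon,D_\varepsilon\in L^\infty_tL^p_x$ for all $p>1$, and then simply invokes the packaged Moser iteration from \cite[Lemma~A.1]{TaoW}---which is precisely the ``shortcut'' you describe in your last paragraph.
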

\begin{proof}
We begin by reformulating the governing equation for $u_\varepsilon$ as
$$u_{\varepsilon t}=\nabla\cdot\left(A_{\varepsilon}(x,t,u_{\varepsilon})\nabla u_{\varepsilon}\right)+\nabla\cdot B_{\varepsilon}(x,t)+D_{\varepsilon}(x,t) \qquad(x,t)\in\Omega\times(0,T_{max,\varepsilon}),$$
where the nonlinear operators are defined by
$$A_{\varepsilon}(x,t,\xi):=v_{\varepsilon}(x,t)\xi \qquad(x,t,\xi)\in\Omega\times (0,T_{max,\varepsilon})\times[0,\infty),$$
and
$$B_{\varepsilon}(x,t):=-\chi u_{\varepsilon}^{\alpha}(x,t)v_{\varepsilon}(x,t)\nabla v_{\varepsilon}(x,t)\qquad(x,t)\in\Omega\times(0,T_{max,\varepsilon}),$$
as well as
$$ D_{\varepsilon}(x,t):=\ell u_{\varepsilon}(x,t)v_{\varepsilon}(x,t) \qquad(x,t)\in\Omega\times(0,T_{max,\varepsilon}).$$
Through applications of Lemma \ref{lemma3.8}, Lemma \ref{lemma3.9} and Lemma \ref{lemma3.10}, we derive the following estimates for any $p > 1$
$$A_{\varepsilon}(x,t)\geq c_{1}(T_{max,\varepsilon})\xi\qquad\text{for all }(x,t,\xi)\in\Omega\times (0,T_{max,\varepsilon})\times[0,\infty),$$
and
$$\|B_{\varepsilon}(\cdot,t)\|_{L^{p}(\Omega)}\leq c_{2}\quad\text{and}\quad\|D_{\varepsilon}(\cdot,t)\|_{L^{p}(\Omega)}\leq c_2\qquad\text{for all }t\in(0,T_{max,\varepsilon}),$$
with some $c_1>0,c_2>0$. Thereafter  \eqref{3.41}  can be derived from  a Moser-type iteration, as recorded in (\cite{TaoW}, Lemma A.1).
\end{proof}

As an application of \eqref{2.7} and Lemma \ref{lemma3.11}, one can show that the solutions to \eqref{2.5} are in fact global in time.
\begin{lemma}\label{lemma3.12}Under the assumptions of Theorem 1.1, we  have
$T_{max,\varepsilon}=\infty$ for all $\varepsilon\in(0,1).$
\end{lemma}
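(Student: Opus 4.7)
The plan is to argue by contradiction, directly invoking the extensibility criterion \eqref{2.7} together with the $L^\infty$-bound established in Lemma \ref{lemma3.11}. Specifically, I would fix $\varepsilon \in (0,1)$ and suppose for contradiction that $T_{\max,\varepsilon} < \infty$. Under this hypothesis, the conclusion \eqref{3.41} of Lemma \ref{lemma3.11} applies and furnishes a finite constant $C(T_{\max,\varepsilon}) > 0$ with
\[
\|u_\varepsilon(\cdot,t)\|_{L^\infty(\Omega)} \leq C(T_{\max,\varepsilon}) \qquad \text{for all } t \in (0, T_{\max,\varepsilon}).
\]
Taking the $\limsup$ as $t \nearrow T_{\max,\varepsilon}$ then gives $\limsup_{t \nearrow T_{\max,\varepsilon}} \|u_\varepsilon(\cdot,t)\|_{L^\infty(\Omega)} \leq C(T_{\max,\varepsilon}) < \infty$, which flatly contradicts the blow-up alternative \eqref{2.7} guaranteed by Lemma \ref{lemma2.1}. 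Hence the assumption $T_{\max,\varepsilon} < \infty$ must fail, so $T_{\max,\varepsilon} = \infty$ for every $\varepsilon \in (0,1)$, as claimed.

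There is no real obstacle here: all the analytic work has already been packaged into the preceding lemmas. The only thing worth emphasizing is that the chain of dependencies leading to Lemma \ref{lemma3.11} is internally consistent even when $T_{\max,\varepsilon}$ is assumed finite, since the constants in Lemmas \ref{lemma3.6}, \ref{lemma3.8}, \ref{lemma3.9}, and \ref{lemma3.10} are all allowed to depend on $T_{\max,\varepsilon}$. Thus the present lemma is essentially a one-line synthesis, and the proof can be written in just a few lines.
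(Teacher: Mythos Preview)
Your proposal is correct and matches the paper's approach exactly: the paper simply states that Lemma \ref{lemma3.12} follows as an application of \eqref{2.7} and Lemma \ref{lemma3.11}, which is precisely the contradiction argument you outline. There is nothing to add.
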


Drawing on the a priori estimates from the previous lemmas, we now deduce the
 H\"{o}lder regularity for the global solution to \eqref{2.5} by applying standard parabolic regularity theory.
\begin{lemma}\label{lemma3.13}
For all $T>0$ and
$\varepsilon\in(0,1)$, there exist $\theta_1=\theta_1(T)\in(0,1)$ and $C(T)>0$ such that
\begin{align}\label{3.42}
   \|u_{\varepsilon}\|_{C^{\theta_1,\frac{\theta_1}{2}}(\overline{\Omega}\times[0,T])} \leq C(T)
\end{align}
and
\begin{align}\label{3.43}
    \|v_{\varepsilon}\|_{C^{\theta_1,\frac{\theta_1}{2}}(\overline{\Omega}\times[0,T])} \leq C(T).
\end{align}
Moreover, for all $\tau>0$ and $T>\tau$ there exist $\theta_2=\theta_2(\tau,T)\in(0,1)$ and $C(\tau,T)>0$ such that
\begin{align}\label{3.44}
    \left\|v_{\varepsilon}\right\|_{C^{2+\theta_2,1+\frac{\theta_2}{2}}(\overline{\Omega}\times[\tau,T])}\leq C(\tau,T)\qquad\text{for all }\varepsilon\in(0,1).
\end{align}
\end{lemma}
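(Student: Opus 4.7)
The first step is to upgrade all previously obtained estimates to uniform-in-$\varepsilon$ bounds on arbitrary finite slabs. Because Lemma~\ref{lemma3.12} guarantees $T_{max,\varepsilon}=\infty$, reinspecting the Moser iteration in Lemma~\ref{lemma3.11} on $(0,T)$ in place of $(0,T_{max,\varepsilon})$ yields $\|u_\varepsilon\|_{L^\infty(\Omega\times(0,T))}\le C(T)$; Lemma~\ref{lemma3.10} supplies the uniform bound for $v_\varepsilon$ in $W^{1,\infty}(\Omega)$; and the Neumann heat-semigroup argument underlying Lemma~\ref{lemma3.9}, applied with $T$ in place of $T_{max,\varepsilon}$, provides a positive lower bound $v_\varepsilon(x,t)\ge c(T)>0$ on $\overline\Omega\times[0,T]$. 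All these constants are independent of $\varepsilon\in(0,1)$.

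For the H\"older estimate of $v_\varepsilon$, the second equation reads $v_{\varepsilon t}-\Delta v_\varepsilon=-u_\varepsilon v_\varepsilon$ with homogeneous Neumann boundary data and a uniformly bounded right-hand side. Standard De~Giorgi--Nash--Moser regularity for linear parabolic equations on smoothly bounded domains therefore yields \eqref{3.43} with some $\theta_1=\theta_1(T)\in(0,1)$.

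For $u_\varepsilon$ we recast the first equation of \eqref{2.5} as
\[
u_{\varepsilon t}=\tfrac12\nabla\!\cdot\!\bigl(v_\varepsilon\,\nabla u_\varepsilon^2\bigr)-\chi\nabla\!\cdot\!\bigl(u_\varepsilon^\alpha v_\varepsilon\nabla v_\varepsilon\bigr)+\ell u_\varepsilon v_\varepsilon.
\]
Since $v_\varepsilon$ is pinched between two positive constants on $\overline\Omega\times[0,T]$ and $\nabla v_\varepsilon$ as well as $u_\varepsilon$ are bounded, the convective flux $u_\varepsilon^\alpha v_\varepsilon\nabla v_\varepsilon$ and the source $\ell u_\varepsilon v_\varepsilon$ belong to $L^\infty$, while the principal part has the porous-medium form $\tfrac12\nabla\!\cdot\!(v_\varepsilon\nabla u_\varepsilon^2)$ with a uniformly elliptic weight $v_\varepsilon$. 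The equation thus falls into the class of bounded weak solutions of degenerate parabolic equations of porous-medium type for which the intrinsic-scaling technique of Porzio--Vespri (and DiBenedetto) provides an interior and up-to-boundary H\"older modulus of continuity, with exponent and constant depending only on the above $L^\infty$-bounds and on the lower bound for $v_\varepsilon$. This yields \eqref{3.42} (after possibly relabeling $\theta_1$).

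Finally, once \eqref{3.42} and \eqref{3.43} are available, the right-hand side $-u_\varepsilon v_\varepsilon$ of the $v_\varepsilon$-equation is H\"older continuous on $\overline\Omega\times[\tau/2,T]$, so standard parabolic Schauder theory with homogeneous Neumann boundary condition on $\partial\Omega\in C^\infty$ upgrades $v_\varepsilon$ to $C^{2+\theta_2,1+\frac{\theta_2}{2}}(\overline\Omega\times[\tau,T])$, giving \eqref{3.44}. The main obstacle is clearly the H\"older estimate for $u_\varepsilon$: the doubly degenerate structure of its diffusion obstructs any direct application of classical parabolic regularity, and it is precisely the uniform positive lower bound on $v_\varepsilon$ provided by the argument of Lemma~\ref{lemma3.9} that reduces \eqref{2.5}$_1$ to a single porous-medium-type equation to which the Porzio--Vespri machinery becomes applicable in an $\varepsilon$-uniform manner.
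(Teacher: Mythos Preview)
Your proposal is correct and follows essentially the same route as the paper: combine the uniform $L^\infty$ bounds on $u_\varepsilon$, $v_\varepsilon$, $\nabla v_\varepsilon$ from Lemmas~\ref{lemma3.9}--\ref{lemma3.11} with the positive lower bound on $v_\varepsilon$ to place the $u_\varepsilon$-equation in the Porzio--Vespri framework for degenerate parabolic equations, and then invoke Schauder theory for \eqref{3.44}. The only cosmetic difference is that you single out De~Giorgi--Nash--Moser for the (linear, nondegenerate) $v_\varepsilon$-equation, whereas the paper cites Porzio--Vespri for both components at once; either is fine.
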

\begin{proof}
   The H\"{o}lder estimates \eqref{3.42} and \eqref{3.43} are achieved through  the combination of Lemmas \ref{lemma3.9}--\ref{lemma3.11}, \eqref{2.8} and an application of the parabolic H\"{o}lder regularity framework established in \cite[Theorem 1.3; Remark 1.4]{Holder}. The subsequent estimate \eqref{3.44} then follows as a direct consequence of classical Schauder theory for scalar parabolic equations \cite[Chapter IV]{Schauder}, when coupled with the established H\"{o}lder continuity from \eqref{3.42}.
\end{proof}

Upon the above estimates, we can construct a continuous weak solutions to \eqref{1.4} through a standard extraction procedure.
\begin{lemma}\label{lemma3.14}
    Let $\alpha\in(\frac{3}{2},\frac{19}{12})$ and suppose that $(u_0,v_0)$ satisfies \eqref{1.5} and \eqref{1.6}. Then there exist
    $(\varepsilon_{j})_{j\in \mathbb{N}}\subset (0,1)$ and functions $u$ and $v$ fulfilling \eqref{1.8}, as well as $u\geq 0$ and $v>0$ in $\overline{\Omega}\times(0,\infty)$,
such that
\begin{align}
    &u_{\varepsilon}\rightarrow{u}\qquad \text{in}\quad C_{loc}^{0}(\overline{\Omega}\times[0,\infty))\label{3.45}\\
    &v_{\varepsilon}\rightarrow{v}\qquad \text{in}\quad C_{loc}^{0}(\overline{\Omega}\times[0,\infty))\quad \text{and}\quad\text{in}\quad C_{loc}^{2,1}(\overline{\Omega}\times(0,\infty))\qquad \text{and}\label{3.46}\\
    &\nabla v_{\varepsilon}\stackrel{*}{\rightharpoonup}\nabla v\qquad  \text{in}\quad L^{\infty}(\Omega\times(0,\infty))\label{3.47}
\end{align}
as $\varepsilon=\varepsilon_{j}\searrow0$, and that $(u,v)$ forms a continuous global weak solutions of \eqref{1.4} in the sense of
Definition \ref{Definition 2.1}.
\end{lemma}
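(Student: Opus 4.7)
The plan is to construct the weak solution by extracting limits along the a priori estimates assembled in Lemmas \ref{lemma3.8}--\ref{lemma3.13}. Since $T_{\max,\varepsilon}=\infty$ (Lemma \ref{lemma3.12}), the Hölder bounds in Lemma \ref{lemma3.13} hold on every interval $[0,T]$ uniformly in $\varepsilon$; a standard diagonal Arzelà--Ascoli argument then produces $\varepsilon_j\searrow 0$ along which $u_\varepsilon\to u$ and $v_\varepsilon\to v$ in $C^0_{loc}(\overline\Omega\times[0,\infty))$, and the interior Schauder-type estimate \eqref{3.44} upgrades the convergence of $v_\varepsilon$ to $C^{2,1}_{loc}(\overline\Omega\times(0,\infty))$. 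The pointwise-in-time $W^{1,\infty}$ bound of Lemma \ref{lemma3.10} combined with Banach--Alaoglu supplies $\nabla v_\varepsilon\stackrel{*}{\rightharpoonup}\nabla v$ in $L^\infty(\Omega\times(0,\infty))$. Nonnegativity of $u$ passes to the uniform limit, while positivity of $v$ follows by applying the argument of Lemma \ref{lemma3.9} on each finite $[0,T]$ to obtain an $\varepsilon$-uniform (but $T$-dependent) positive lower bound.

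Next I would verify the regularity and integrability conditions in \eqref{1.8} and \eqref{2.2}. The $L^\infty_{loc}$ bound on $u$ is a direct consequence of Lemma \ref{lemma3.11}, applied on each finite subinterval of $[0,\infty)$, while the $L^\infty_{loc}([0,\infty);W^{1,\infty}(\Omega))$ control on $v$ follows from \eqref{2.8} together with Lemma \ref{lemma3.10} and weak-$*$ lower semicontinuity. For \eqref{2.2}, the $p=2$ case of Lemma \ref{lemma3.8} gives $\int_0^T\!\int_\Omega v_\varepsilon u_\varepsilon|\nabla u_\varepsilon|^2\le C$; coupled with the local-in-time lower bound on $v_\varepsilon$, this yields $\int_0^T\!\int_\Omega|\nabla u_\varepsilon^{3/2}|^2\le C(T)$, so that $u_\varepsilon^{3/2}$ is bounded in $L^2_{loc}([0,\infty);H^1(\Omega))$. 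Combined with the uniform convergence $u_\varepsilon^{1/2}\to u^{1/2}$ and the chain rule $\nabla u_\varepsilon^2=\frac{4}{3}u_\varepsilon^{1/2}\nabla u_\varepsilon^{3/2}$, this delivers $u^2\in L^1_{loc}([0,\infty);W^{1,1}(\Omega))$; the requirement $u^\alpha\nabla v\in L^1_{loc}$ is then immediate from the $L^\infty$ bounds on $u$ and $\nabla v$.

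It remains to pass to the limit in the weak identities. Equation \eqref{2.4} for $v$ is a straightforward consequence of the $C^{2,1}_{loc}$ convergence in \eqref{3.46}. For \eqref{2.3}, the time-derivative and logistic terms pass by dominated convergence using uniform convergence of $u_\varepsilon$ and $v_\varepsilon$, and the chemotaxis term uses in addition the locally uniform convergence of $\nabla v_\varepsilon$ away from $t=0$ together with the $L^\infty$ bound on $u_\varepsilon^\alpha v_\varepsilon$. For the degenerate diffusion term $-\frac{1}{2}\int_0^\infty\!\int_\Omega v_\varepsilon\nabla u_\varepsilon^2\cdot\nabla\varphi$, the identity $\nabla u_\varepsilon^2=\frac{4}{3}u_\varepsilon^{1/2}\nabla u_\varepsilon^{3/2}$ together with the weak $L^2_{loc}$ convergence of $\nabla u_\varepsilon^{3/2}$ and the strong $L^2_{loc}$ convergence of $v_\varepsilon u_\varepsilon^{1/2}\nabla\varphi$ produces the desired limit.

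The main obstacle is precisely this last step: extracting enough regularity to identify the weak limit of $\nabla u_\varepsilon^2$ despite the doubly degenerate character of the first equation, since neither $u$ nor $v$ enjoys a pointwise lower bound that is uniform in time. The resolution rests on the local-in-time lower bound for $v_\varepsilon$ from Lemma \ref{lemma3.9}, now valid on every finite interval because Lemma \ref{lemma3.12} has ruled out finite-time singularity formation; this single ingredient is what converts the weighted dissipation of Lemma \ref{lemma3.8} into the unweighted $H^1$-type control on $u_\varepsilon^{3/2}$ on which every subsequent limit identification depends.
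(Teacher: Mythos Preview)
Your proposal is correct and takes essentially the same approach as the paper: both extract via Arzel\`a--Ascoli using the H\"older bounds of Lemma \ref{lemma3.13}, then combine the $p=2$ case of Lemma \ref{lemma3.8} with the local-in-time lower bound on $v_\varepsilon$ from Lemma \ref{lemma3.9} to control the gradient of (a power of) $u_\varepsilon$ and pass to the limit in the weak formulation. The only cosmetic difference is that the paper bounds $\nabla u_\varepsilon^2$ in $L^1$ directly via Young's inequality $|\nabla u_\varepsilon^2|\le u_\varepsilon v_\varepsilon|\nabla u_\varepsilon|^2+u_\varepsilon v_\varepsilon^{-1}$, while you bound $\nabla u_\varepsilon^{3/2}$ in $L^2$ and factorize $\nabla u_\varepsilon^2=\tfrac{4}{3}u_\varepsilon^{1/2}\nabla u_\varepsilon^{3/2}$; your route makes the weak--strong limit passage in the diffusion term more explicit, but the ingredients are identical.
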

\begin{proof}
    By virtue of Lemma \ref{lemma3.8}, Lemma \ref{lemma3.9} and Young's inequality, there exists $C=C(K,T)>0$ such that for all $T>0$,
    \begin{align*}
        \int_0^{T}\int_{\Omega}|\nabla u_{\varepsilon}^2|\leq \int_0^{T}\int_{\Omega}u_{\varepsilon}v_{\varepsilon}|\nabla u_{\varepsilon}|^2+\int_0^{T}\int_{\Omega}u_{\varepsilon}v_{\varepsilon}^{-1}\leq C,
    \end{align*}
    thereby establishing the boundedness of the gradient sequence $(\nabla u_{\varepsilon}^2)_{\varepsilon\in(0,1)}$ in $ L^1((0,T);W^{1,1}(\Omega))$. Subsequently, employing Lemma \ref{lemma3.10}, Lemma \ref{lemma3.13} and the Arzel\'{a}-Ascoli compactness theorem, a straightforward extraction procedure allows us to construct a vanishing subsequence $(\varepsilon_{j})_{j\in \mathbb{N}}$ with $\varepsilon_{j}\searrow0$. This yields nonnegative functions $(u,v)$ satisfying \eqref{1.8}, \eqref{2.2} and \eqref{3.45}--\eqref{3.47}. Finally, \eqref{2.3} and \eqref{2.4} can be accomplished in the respective weak  formulation associated with
      \eqref{2.5} on the basis of   these convergence properties.
\end{proof}
\begin{proof}[Proof of Theorem \ref{Th1.1}] The statements has fully been covered by Lemma \ref{lemma3.14}.
\end{proof}

\section{Large time behavior }\label{sec:4}
This section is devoted to establishing the stability properties and nontrivial stabilization results stated in Theorems \ref{Th1.2} and \ref{Th1.3}, respectively. The proofs rely crucially on the   temporal decay of $u_{\varepsilon t}$ in suitable dual spaces, which  is established via the integral estimates \eqref{2.10}, \eqref{2.11} and \eqref{2.19-1}.
\begin{lemma}\label{lemma4.1}
Let $\Omega \subset \mathbb R^3$ and $K>0$ with the property that \eqref{1.6} holds. Supposed that
$\alpha\in\left(\frac{3}{2},\frac{19}{12}\right)$, there exist  $\sigma>0$ and $C=C(K)>0$ such that for all $\varepsilon\in(0,1)$ we have
\begin{align}\label{4.1}
   \int_0^{\infty}\|u_{\varepsilon t}(\cdot,t)\|_{(W^{1,\infty}(\Omega))^*}dt\leq C\cdot \left\{\int_{\Omega} v_{0}\right\}^{\sigma}.
\end{align}
\end{lemma}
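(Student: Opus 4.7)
The plan is to deploy a duality-based argument on the $u_\varepsilon$-equation in \eqref{2.5}. For an arbitrary test function $\varphi\in W^{1,\infty}(\Omega)$ with $\|\varphi\|_{W^{1,\infty}(\Omega)}\le 1$, multiplying this equation by $\varphi$, integrating by parts, and invoking the no-flux boundary conditions yields
\begin{align*}
\left|\int_\Omega u_{\varepsilon t}(\cdot,t)\,\varphi\right|
\le \int_\Omega u_\varepsilon v_\varepsilon|\nabla u_\varepsilon|
+\chi\int_\Omega u_\varepsilon^\alpha v_\varepsilon|\nabla v_\varepsilon|
+\ell\int_\Omega u_\varepsilon v_\varepsilon.
\end{align*}
Taking the supremum over admissible $\varphi$ and integrating in $t$, the task reduces to bounding each of the three resulting space-time integrals by a positive power of $\int_\Omega v_0$.

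The linear reaction contribution is immediate from \eqref{2.10}, which delivers $\int_0^\infty\int_\Omega u_\varepsilon v_\varepsilon\le\int_\Omega v_0$. For the porous-medium-type contribution I would apply a spatial Cauchy--Schwarz followed by a temporal Cauchy--Schwarz to obtain
\begin{align*}
\int_0^\infty\!\!\int_\Omega u_\varepsilon v_\varepsilon|\nabla u_\varepsilon|
\le\Bigl(\int_0^\infty\!\!\int_\Omega u_\varepsilon v_\varepsilon\Bigr)^{\!1/2}
\Bigl(\int_0^\infty\!\!\int_\Omega u_\varepsilon v_\varepsilon|\nabla u_\varepsilon|^2\Bigr)^{\!1/2},
\end{align*}
where the first factor again contributes $\bigl(\int_\Omega v_0\bigr)^{1/2}$ via \eqref{2.10}, while the second is uniformly bounded thanks to \eqref{3.30} applied with $p=2$.

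The main obstacle lies in controlling the chemotactic contribution $\int u_\varepsilon^\alpha v_\varepsilon|\nabla v_\varepsilon|$, because none of the hitherto derived bounds automatically carry a factor of $\int_\Omega v_0$. My plan here is first to split via Cauchy--Schwarz,
\begin{align*}
\int_0^\infty\!\!\int_\Omega u_\varepsilon^\alpha v_\varepsilon|\nabla v_\varepsilon|
\le\Bigl(\int_0^\infty\!\!\int_\Omega u_\varepsilon^{2\alpha} v_\varepsilon\Bigr)^{\!1/2}
\Bigl(\int_0^\infty\!\!\int_\Omega v_\varepsilon|\nabla v_\varepsilon|^2\Bigr)^{\!1/2}.
\end{align*}
Integrating the identity \eqref{2.12} on $(0,\infty)$ gives $6\int_0^\infty\int_\Omega v_\varepsilon|\nabla v_\varepsilon|^2\le\int_\Omega v_0^3\le\|v_0\|_{L^\infty(\Omega)}^2\int_\Omega v_0$, so the second factor is already majorized by $C(K)\bigl(\int_\Omega v_0\bigr)^{1/2}$. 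To pull a positive power of $\int_\Omega v_0$ out of the first factor, I would insert one further Hölder interpolation: for any fixed $q>2\alpha$, with $\theta:=(q-2\alpha)/(q-1)\in(0,1)$, the pointwise identity $u_\varepsilon^{2\alpha}v_\varepsilon=(u_\varepsilon v_\varepsilon)^\theta(u_\varepsilon^q v_\varepsilon)^{1-\theta}$ combined with Hölder in space and then in time yields
\begin{align*}
\int_0^\infty\!\!\int_\Omega u_\varepsilon^{2\alpha}v_\varepsilon
\le\Bigl(\int_0^\infty\!\!\int_\Omega u_\varepsilon v_\varepsilon\Bigr)^{\!\theta}
\Bigl(\int_0^\infty\!\!\int_\Omega u_\varepsilon^q v_\varepsilon\Bigr)^{\!1-\theta},
\end{align*}
where the first factor is at most $\bigl(\int_\Omega v_0\bigr)^\theta$ by \eqref{2.10} and the second is uniformly bounded through \eqref{3.30-1} applied with $p=q-1>1$.

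Summing the three contributions, I would conclude \eqref{4.1} with $\sigma$ equal to the minimum of the three positive exponents generated, which (after adjusting constants using $\int_\Omega v_0\le K|\Omega|$) allows taking any $\sigma\in(0,1/2]$; the constant $C$ depends only on $K$ through the a priori bounds of Lemma \ref{lemma2.1} and Lemma \ref{lemma3.8}. The subtle point throughout is precisely this last interpolation, since it is only the refined weighted estimate \eqref{3.30-1}—rather than any naked $L^p$ bound on $u_\varepsilon$—that permits the large-$u_\varepsilon$ portion of $u_\varepsilon^{2\alpha}v_\varepsilon$ to be absorbed while still leaving an honest power of $\int_\Omega v_0$ on the right-hand side.
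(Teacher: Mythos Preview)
Your argument is correct and follows the same duality strategy as the paper: test against $\varphi\in W^{1,\infty}(\Omega)$, bound the three resulting space--time integrals, and use H\"older interpolation between $\int u_\varepsilon v_\varepsilon$ and $\int u_\varepsilon^q v_\varepsilon$ (supplied by \eqref{2.10} and \eqref{3.30-1}) to extract a positive power of $\int_\Omega v_0$. The details differ in two places. For the diffusion term the paper splits $u_\varepsilon v_\varepsilon|\nabla u_\varepsilon|$ as $(v_\varepsilon/u_\varepsilon)^{1/2}|\nabla u_\varepsilon|\cdot u_\varepsilon^{3/2}v_\varepsilon^{1/2}$, invoking \eqref{2.19-1} for the first factor and then interpolating $\int u_\varepsilon^3 v_\varepsilon$; your split $(u_\varepsilon v_\varepsilon)^{1/2}\cdot(u_\varepsilon v_\varepsilon|\nabla u_\varepsilon|^2)^{1/2}$ is more direct, drawing the $v_0$-power straight from \eqref{2.10} and the bounded factor from \eqref{3.30} with $p=2$. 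For the taxis term the paper treats $\int v_\varepsilon|\nabla v_\varepsilon|^2$ as merely $O(1)$ via \eqref{2.11} and obtains the $v_0$-power solely from the interpolation of $\int u_\varepsilon^{2\alpha}v_\varepsilon$, whereas you read off from \eqref{2.12} the sharper bound $\int_0^\infty\!\int_\Omega v_\varepsilon|\nabla v_\varepsilon|^2\le\tfrac16\int_\Omega v_0^3\le C(K)\int_\Omega v_0$, gaining an additional half-power. Your variant thus avoids any appeal to \eqref{2.19-1} and in principle yields a larger admissible $\sigma$; the paper's route is slightly more elementary in that it needs only \eqref{2.19-1} rather than the case $p=2$ of \eqref{3.30}.
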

\begin{proof}
    Using the first equation in \eqref{2.5} one can see that for all $t>0$ and any $\psi\in W^{1,\infty}(\Omega) $ such that $\|\psi\|_{W^{1,\infty}(\Omega)}\equiv \max\{\|\psi\|_{L^{\infty}(\Omega)},\|\nabla \psi\|_{L^{\infty}(\Omega)}\}\leq 1$,
    \begin{align*}
   \left |\int_{\Omega}u_{\varepsilon t}\psi\right|&=\left|-\int_{\Omega}u_{\varepsilon}v_{\varepsilon}\nabla u_{\varepsilon}\cdot\nabla\psi +\chi \int_{\Omega}u_{\varepsilon}^{\alpha}v_{\varepsilon}\nabla v_{\varepsilon}\cdot\nabla\psi+\ell\int_{\Omega}u_{\varepsilon}v_{\varepsilon}\right|\\
    &\leq \int_{\Omega}u_{\varepsilon}v_{\varepsilon}|\nabla u_{\varepsilon}| +\chi \int_{\Omega}u_{\varepsilon}^{\alpha}v_{\varepsilon}|\nabla v_{\varepsilon}|+\ell\int_{\Omega}u_{\varepsilon}v_{\varepsilon},
    \end{align*}
 so that
 \begin{align}\label{4.2}
     \|u_{\varepsilon t}(\cdot,t)\|_{(W^{1,\infty}(\Omega))^*}\leq \int_{\Omega}u_{\varepsilon}v_{\varepsilon}|\nabla u_{\varepsilon}| +\chi \int_{\Omega}u_{\varepsilon}^{\alpha}v_{\varepsilon}|\nabla v_{\varepsilon}|+\ell\int_{\Omega}u_{\varepsilon}v_{\varepsilon}.
 \end{align}
 Thanks to the H\"{o}der inequality and \eqref{2.10}, for all $T>0$ we infer that
  \begin{align}\label{4.3}
     \int_0^{T}\int_{\Omega}u_{\varepsilon}v_{\varepsilon}|\nabla u_{\varepsilon}|
     &\leq\left\{\int_0^{T}\int_{\Omega}\frac{v_{\varepsilon}}{u_{\varepsilon}}|\nabla u_{\varepsilon}|^2\right\}^{\frac{1}{2}}\cdot \left\{\int_0^{T}\int_{\Omega}u_{\varepsilon}^3v_{\varepsilon} \right\}^{\frac{1}{2}}\nonumber\\
     &\leq\left\{\int_0^{T}\int_{\Omega}\frac{v_{\varepsilon}}{u_{\varepsilon}}|\nabla u_{\varepsilon}|^2\right\}^{\frac{1}{2}}\cdot \left\{\int_0^{T}\int_{\Omega}u_{\varepsilon}^{\frac{3-r}{1-r}}v_{\varepsilon} \right\}^{\frac{1-r}{2}}\cdot \left\{\int_0^{T}\int_{\Omega}u_{\varepsilon}v_{\varepsilon} \right\}^{\frac{r}{2}}\nonumber\\
     &\leq\left\{\int_0^{T}\int_{\Omega}\frac{v_{\varepsilon}}{u_{\varepsilon}}|\nabla u_{\varepsilon}|^2\right\}^{\frac{1}{2}}\cdot \left\{\int_0^{T}\int_{\Omega}u_{\varepsilon}^{\frac{3-r}{1-r}}v_{\varepsilon} \right\}^{\frac{1-r}{2}}\cdot \left\{\int_{\Omega}v_{0} \right\}^{\frac{r}{2}}
 \end{align}
 and
  \begin{align}\label{4.4}
     \int_0^{T}\int_{\Omega}u_{\varepsilon}^{\alpha}v_{\varepsilon}|\nabla v_{\varepsilon}|
     &\leq\left\{\int_0^{T}\int_{\Omega}v_{\varepsilon}|\nabla v_{\varepsilon}|^2\right\}^{\frac{1}{2}}\cdot \left\{\int_0^{T}\int_{\Omega}u_{\varepsilon}^{2\alpha}v_{\varepsilon} \right\}^{\frac{1}{2}}\nonumber\\
     &\leq\left\{\int_0^{T}\int_{\Omega}v_{\varepsilon}|\nabla v_{\varepsilon}|^2\right\}^{\frac{1}{2}}\cdot \left\{\int_0^{T}\int_{\Omega}u_{\varepsilon}^{\frac{2\alpha-r}{1-r}}v_{\varepsilon} \right\}^{\frac{1-r}{2}}\cdot \left\{\int_0^{T}\int_{\Omega}u_{\varepsilon}v_{\varepsilon} \right\}^{\frac{r}{2}}\nonumber\\
     &\leq\left\{\int_0^{T}\int_{\Omega}v_{\varepsilon}|\nabla v_{\varepsilon}|^2\right\}^{\frac{1}{2}}\cdot \left\{\int_0^{T}\int_{\Omega}u_{\varepsilon}^{\frac{2\alpha-r}{1-r}}v_{\varepsilon} \right\}^{\frac{1-r}{2}}\cdot \left\{\int_{\Omega}v_{0} \right\}^{\frac{r}{2}}
 \end{align}
 as well as
 \begin{align}\label{4.5}
    \int_0^{T}\int_{\Omega}u_{\varepsilon}v_{\varepsilon}\leq \int_{\Omega}v_{0}\leq\left\{\int_{\Omega}v_{0} \right\}^{\frac{r}{2}}\cdot\left(\|v_{0}\|_{L^{\infty}(\Omega)}\cdot|\Omega|\right)^{\frac{1-r}{2}},
 \end{align}
 where $r\in (0,1)$ and $\min\{\frac{2\alpha-r}{1-r},\frac{3-r}{1-r}\}>2$. It follows from \eqref{4.2}--\eqref{4.5} that there exists $C(K)>0$ such that
 \begin{align*}
      \int_0^T\|u_{\varepsilon t}(\cdot,t)\|_{(W^{1,\infty}(\Omega))^*}dt\leq C(K)\cdot\left\{\int_{\Omega}v_{0} \right\}^{\frac{r}{2}}
 \end{align*}
 due to \eqref{1.6}, \eqref{2.11}, \eqref{2.19-1} and \eqref{3.30-1}, and thereby we arrive at \eqref{4.1} with $\sigma:=\frac{r}{2}$.
\end{proof}
Making use of Lemma  \ref{lemma4.1}, we can derive the following consequence which  exclusively involves the zero-order expression of $u$. 
\begin{lemma}\label{lemma4.2}
Let $\alpha\in\left(\frac{3}{2},\frac{19}{12}\right)$ and $K>0$ with the property that \eqref{1.6} holds. Given
  $\sigma>0$ and $C=C(K)>0$ as in Lemma \ref{lemma4.1}, then for any nondecreasing $(t_{k})_{k\in \mathbb{N}}\subset [0,\infty)$, we have
\begin{align}\label{4.6}
   \sum_{k=1}^{\infty}\|u(\cdot,t_{k+1})-u(\cdot,t_{k})\|_{(W^{1,\infty}(\Omega))^*}dt\leq C\cdot \left\{\int_{\Omega} v_{0}\right\}^{\sigma},
\end{align}
where  we set $u(\cdot,0):=u_0$.
\end{lemma}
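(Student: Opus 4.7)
The plan is to reduce \eqref{4.6} to the integral bound \eqref{4.1} by a telescoping argument on the approximate solutions, followed by an $\varepsilon\searrow 0$ limit.  First I would work at the level of $u_\varepsilon$.  Since, by Lemma \ref{lemma3.12}, $T_{\max,\varepsilon}=\infty$ and $u_\varepsilon\in C^{2,1}(\overline\Omega\times(0,\infty))$ with $u_\varepsilon(\cdot,0)=u_0+\varepsilon$, the map $t\mapsto u_\varepsilon(\cdot,t)\in (W^{1,\infty}(\Omega))^*$ is absolutely continuous, so that for any $0\le s_1\le s_2$
\[
u_\varepsilon(\cdot,s_2)-u_\varepsilon(\cdot,s_1)=\int_{s_1}^{s_2}u_{\varepsilon t}(\cdot,s)\,ds\qquad\text{in }(W^{1,\infty}(\Omega))^*.
\]

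Next, I would apply this with $s_1=t_k$ and $s_2=t_{k+1}$, take dual norms, and sum.  Since $(t_k)$ is nondecreasing, the intervals $(t_k,t_{k+1})$ are pairwise non-overlapping and contained in $[0,\infty)$, so for every $N\in\mathbb{N}$
\[
\sum_{k=1}^{N}\|u_\varepsilon(\cdot,t_{k+1})-u_\varepsilon(\cdot,t_k)\|_{(W^{1,\infty}(\Omega))^*}
\le \int_{0}^{\infty}\|u_{\varepsilon t}(\cdot,s)\|_{(W^{1,\infty}(\Omega))^*}\,ds,
\]
and Lemma \ref{lemma4.1} bounds the right-hand side by $C(K)\{\int_\Omega v_0\}^{\sigma}$, uniformly in $\varepsilon\in(0,1)$.

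To conclude, I would pass to the limit along the sequence $\varepsilon=\varepsilon_j\searrow 0$ supplied by Lemma \ref{lemma3.14}.  The convergence $u_\varepsilon\to u$ in $C^0_{\mathrm{loc}}(\overline\Omega\times[0,\infty))$ yields $u_\varepsilon(\cdot,t_k)\to u(\cdot,t_k)$ in $L^\infty(\Omega)$ for each fixed $k$, while for $t_k=0$ the convention $u(\cdot,0):=u_0$ matches $u_\varepsilon(\cdot,0)=u_0+\varepsilon\to u_0$ in $L^\infty(\Omega)$ as well.  Using the continuous embedding $L^\infty(\Omega)\hookrightarrow (W^{1,\infty}(\Omega))^*$, this gives for every fixed $N$
\[
\sum_{k=1}^{N}\|u(\cdot,t_{k+1})-u(\cdot,t_k)\|_{(W^{1,\infty}(\Omega))^*}\le C\Bigl\{\int_\Omega v_0\Bigr\}^{\sigma},
\]
and letting $N\to\infty$ yields \eqref{4.6}.

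No step here is genuinely delicate: the heavy lifting has already been done in Lemma \ref{lemma4.1}.  The only point that requires a moment's thought is ensuring that the finite sum passes to the limit, which is immediate from the uniform convergence supplied by \eqref{3.45}; in particular, no Fatou-type argument or weak compactness is needed, because the bound on the partial sums is uniform in $\varepsilon$ and the individual terms converge.
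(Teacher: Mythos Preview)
Your proof is correct and follows essentially the same approach as the paper: both arguments represent $u_\varepsilon(\cdot,t_{k+1})-u_\varepsilon(\cdot,t_k)$ as $\int_{t_k}^{t_{k+1}}u_{\varepsilon t}$, exploit the disjointness of the intervals to invoke Lemma~\ref{lemma4.1}, and then pass to the limit via the uniform convergence \eqref{3.45}. Your version is in fact slightly more careful in first truncating to finite $N$ and explicitly handling the case $t_k=0$.
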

\begin{proof}
     Fixing any such nondecreasing sequence $(t_k)_{k \in \mathbb{N}}$, we infer from Lemma \ref{lemma4.1} that
     \begin{align*}
         \sum_{k\in \mathbb{N}} \| u_{\varepsilon}(\cdot, t_{k+1}) - u_{\varepsilon}(\cdot, t_k) \|_{(W^{1,\infty}(\Omega))^*}
    &= \sum_{k\in \mathbb{N}} \left\| \int_{t_k}^{t_{k+1}} u_{\varepsilon t}(\cdot, t) \, dt \right\|_{(W^{1,\infty}(\Omega))^*}\\
    &\leq \sum_{k\in \mathbb{N}} \int_{t_k}^{t_{k+1}} \| u_{\varepsilon}(\cdot, t) \|_{(W^{1,\infty}(\Omega))^*} \, dt\\
    &\leq C(K) \cdot \left\{ \int_{\Omega} v_{0} \right\}^{\sigma},
     \end{align*}
     because $(t_k, t_{k+1}) \cap (t_l, t_{l+1}) = \emptyset$ for $k \in \mathbb{N}$ and $l \in \mathbb{N}$ with $k \neq l$. This along with \eqref{3.45} implies \eqref{4.6} immediately.
\end{proof}
The quantitative dependence on $v_0$ not only establishes large-time stabilization of individual trajectories in their first component but also quantifies the proximity between the limiting profile and initial data.
\begin{lemma}\label{lemma4.3}
Let $\alpha\in\left(\frac{3}{2},\frac{19}{12}\right)$ and $K>0$ with the property that \eqref{1.6} holds.  Then  the function $u$ obtained in Lemma \ref{lemma4.2} exhibits the convergence
\begin{align}\label{4.7}
    u(\cdot,t)\rightarrow u_{\infty}\qquad in\;(W^{1,\infty}(\Omega))^*\qquad as\; t\to \infty
\end{align}
with some $u_{\infty}\in(W^{1,\infty}(\Omega))^*$ which satisfies
\begin{align}\label{4.7-1}
    \|u_{\infty}-u_0\|_{(W^{1,\infty}(\Omega))^*}\leq C(K)\cdot \left\{\int_{\Omega} v_{0}\right\}^{\sigma}
\end{align}
with $C(K)>0$  as in Lemma \ref{lemma4.1}.
\end{lemma}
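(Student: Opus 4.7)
The plan is to extract $u_\infty$ as the limit of a Cauchy sequence in the Banach space $(W^{1,\infty}(\Omega))^*$, with the summability provided by Lemma \ref{lemma4.2}, and then to upgrade subsequential convergence to full convergence along $t\to\infty$ by a standard interleaving argument. First I would fix any nondecreasing sequence $(t_k)_{k\in\mathbb{N}}\subset[0,\infty)$ with $t_1=0$ and $t_k\to\infty$. Applying Lemma \ref{lemma4.2} to this sequence, together with the convention $u(\cdot,0):=u_0$, yields
\begin{align*}
\sum_{k=1}^{\infty}\|u(\cdot,t_{k+1})-u(\cdot,t_k)\|_{(W^{1,\infty}(\Omega))^*}\leq C(K)\cdot\Bigl\{\int_{\Omega}v_0\Bigr\}^{\sigma}.
\end{align*}
In particular the partial sums are Cauchy, so $(u(\cdot,t_k))_{k\in\mathbb{N}}$ is a Cauchy sequence in the Banach space $(W^{1,\infty}(\Omega))^*$, and consequently admits a unique limit $u_\infty\in(W^{1,\infty}(\Omega))^*$.

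Next I would check that this limit does not depend on the chosen sequence. If $(s_k)_{k\in\mathbb{N}}$ is another nondecreasing sequence diverging to $\infty$ (again starting at $s_1=0$), I would interleave the two sequences into a single nondecreasing sequence $(\tau_k)$ with $\tau_k\to\infty$ and apply Lemma \ref{lemma4.2} to $(\tau_k)$; the same argument shows $(u(\cdot,\tau_k))$ is Cauchy, hence convergent, which forces the limits of its two interlocking subsequences $(u(\cdot,t_k))$ and $(u(\cdot,s_k))$ to coincide. To deduce full convergence $u(\cdot,t)\to u_\infty$ as the continuous parameter $t\to\infty$, I would argue by contradiction: if some $\eta_0>0$ and $t_k\nearrow\infty$ existed with $\|u(\cdot,t_k)-u_\infty\|_{(W^{1,\infty}(\Omega))^*}\geq\eta_0$, then inserting $t_1=0$ in front and applying the Cauchy argument to the reindexed nondecreasing sequence would force $u(\cdot,t_k)\to u_\infty$ along this very sequence, a contradiction.

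For the quantitative estimate \eqref{4.7-1}, I would telescope along any fixed sequence $(t_k)_{k\in\mathbb{N}}$ as above with $t_1=0$: the triangle inequality combined with Lemma \ref{lemma4.2} gives
\begin{align*}
\|u(\cdot,t_n)-u_0\|_{(W^{1,\infty}(\Omega))^*}\leq\sum_{k=1}^{n-1}\|u(\cdot,t_{k+1})-u(\cdot,t_k)\|_{(W^{1,\infty}(\Omega))^*}\leq C(K)\cdot\Bigl\{\int_{\Omega}v_0\Bigr\}^{\sigma}
\end{align*}
for every $n\geq 2$. Passing to the limit $n\to\infty$ and using $u(\cdot,t_n)\to u_\infty$ in $(W^{1,\infty}(\Omega))^*$ (together with the weak-$\ast$ lower semicontinuity of the dual norm) yields \eqref{4.7-1}.

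The only genuinely delicate point is the independence of the limit on the chosen sequence and the passage from subsequential to continuous-time convergence; both rest on the flexibility of Lemma \ref{lemma4.2} to accommodate arbitrary nondecreasing sequences, which is exactly why that lemma was phrased as it was. Everything else is a standard Cauchy-completeness argument in a Banach space.
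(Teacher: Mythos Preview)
Your proof is correct and follows essentially the same Cauchy-sequence approach as the paper, with the added detail of the interleaving/contradiction argument that the paper leaves implicit. The only minor difference is in deriving \eqref{4.7-1}: rather than telescoping along a diverging sequence, the paper applies Lemma \ref{lemma4.2} to the sequence $t_1:=0$, $t_k:=t$ for all $k\geq 2$, which collapses the sum to the single term $\|u(\cdot,t)-u_0\|_{(W^{1,\infty}(\Omega))^*}$ and gives the bound directly for every $t>0$ before letting $t\to\infty$; also note that since $u(\cdot,t_n)\to u_\infty$ in norm, ordinary continuity of the norm suffices and no lower semicontinuity argument is needed.
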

\begin{proof}
    Lemma \ref{lemma4.2} implies that $\{u(\cdot,t_{k})\}_{k\in \mathbb{N}}$ forms a Cauchy sequence in $(W^{1,\infty}(\Omega))^*$, which establishes \eqref{4.7} with some $u_{\infty}\in(W^{1,\infty}(\Omega))^*$. Now select the sequence $(t_{k})_{k\in \mathbb{N}}$ with $t_1:=0$ and $t_k:=t$ for $k\geq 2$ in \eqref{4.6}, yielding
    \begin{align}\label{4.7-2}
        \|u(\cdot,t)-u_0\|_{(W^{1,\infty}(\Omega))^*}\leq C(K)\cdot \left\{\int_{\Omega} v_{0}\right\}^{\sigma}\qquad for \;all\; t>0,
\end{align}
and thereby derives \eqref{4.7-1} due to \eqref{4.7}.
\end{proof}

The quantitative form of the right-hand side in \eqref{4.7-1} and \eqref{4.7-2} allow us to derive the following stability property  of function pairs $(u_0,0)$.
\begin{lemma}\label{lemma4.4}
Let $\alpha\in\left(\frac{3}{2},\frac{19}{12}\right)$ and $K>0$ with the property that \eqref{1.6} holds.  Then for each $\eta>0$, there exists $\delta_1=\delta_1(K,\eta)>0$ whenever $u_0$ and $v_0$ fulfill
\eqref{1.5}, as well as
\begin{align}\label{4.11}
    \int_{\Omega}v_0\leq \delta_1,
\end{align}
the solution $(u,v)$ of \eqref{1.4} obtained in Theorem \ref{Th1.1} satisfies
\begin{align}\label{4.9}
    \|u(\cdot,t)-u_0\|_{(W^{1,\infty}(\Omega))^*}\leq \eta\qquad for \;all\; t>0.
\end{align}
Moreover, the corresponding limit function  from Lemma \ref{lemma4.3} admits
\begin{align}\label{4.10}
   \|u_{\infty}-u_0\|_{(W^{1,\infty}(\Omega))^*}\leq \eta.
\end{align}
\end{lemma}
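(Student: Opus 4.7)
The plan is to read off this lemma as a direct quantitative consequence of Lemma \ref{lemma4.3}, which has already done the substantive work. Recall that Lemma \ref{lemma4.3} supplies constants $\sigma>0$ and $C(K)>0$ (inherited from Lemma \ref{lemma4.1}) such that
\begin{align*}
\|u(\cdot,t)-u_0\|_{(W^{1,\infty}(\Omega))^*}\leq C(K)\cdot\left\{\int_\Omega v_0\right\}^\sigma\quad\text{for all }t>0,
\end{align*}
and the analogous bound for $\|u_\infty-u_0\|_{(W^{1,\infty}(\Omega))^*}$. The role of \eqref{4.11} is therefore simply to make the right-hand side smaller than $\eta$.

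Concretely, I would set
\begin{align*}
\delta_1:=\delta_1(K,\eta):=\left(\frac{\eta}{C(K)+1}\right)^{1/\sigma},
\end{align*}
which depends only on $K$ (through $C(K)$ and $\sigma$) and on $\eta$, as required. Then, whenever $u_0,v_0$ satisfy \eqref{1.5}, \eqref{1.6} and $\int_\Omega v_0\leq\delta_1$, the bound from Lemma \ref{lemma4.3} gives
\begin{align*}
\|u(\cdot,t)-u_0\|_{(W^{1,\infty}(\Omega))^*}\leq C(K)\delta_1^\sigma\leq\eta\quad\text{for all }t>0,
\end{align*}
which is \eqref{4.9}. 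Passing to the limit $t\to\infty$ using the convergence \eqref{4.7} in $(W^{1,\infty}(\Omega))^*$ yields \eqref{4.10} as well; alternatively, one simply applies the corresponding inequality \eqref{4.7-1} for $u_\infty$ with the same choice of $\delta_1$.

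There is no real obstacle here: the argument is a one-line quantitative rephrasing once the constant and exponent produced in Lemma \ref{lemma4.1} have been tracked through Lemmas \ref{lemma4.2} and \ref{lemma4.3}. The only thing worth emphasizing in the write-up is that $\delta_1$ depends on $\eta$ and on $K$ only, because $C(K)$ and $\sigma$ come from the duality estimate \eqref{4.1}, whose constants are explicitly controlled by the bounds \eqref{1.6}, \eqref{2.10}, \eqref{2.11}, \eqref{2.19-1} and \eqref{3.30-1}, none of which introduce any dependence on $v_0$ beyond that already absorbed into $K$.
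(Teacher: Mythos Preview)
Your proposal is correct and follows essentially the same approach as the paper: the paper's proof is the single sentence ``From \eqref{4.7-1} and \eqref{4.7-2}, it follows that for any $\eta>0$, there exists $\delta_1=\delta_1(K,\eta)>0$ such that \eqref{4.11} warrants \eqref{4.9} and \eqref{4.10},'' and you have simply made the choice of $\delta_1$ explicit.
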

\begin{proof}
   From \eqref{4.7-1} and \eqref{4.7-2}, it follows that for any $\eta>0$,  there exists $\delta_1=\delta_1(K,\eta)>0$ such that \eqref{4.11} warrants \eqref{4.9} and \eqref{4.10}.
\end{proof}

\begin{proof}[Proof of Theorem \ref{Th1.2}] The claimed result has precisely been asserted by Lemma \ref{lemma4.4}.
\end{proof}
The following lemma is used to obtain the decay estimate for $v$.
\begin{lemma}\label{lemma4.0}(\cite{TaoW3})
    Let $y\in C^1([0,\infty))$ and $h\in L^1_{loc}([0,\infty))$ both be nonnegative, and suppose that
    \begin{align*}
        \int_t^{t-1}h(s)ds\rightarrow 0\qquad as\; t\rightarrow\infty,
    \end{align*}
    and that there exists $\lambda>0$ such that
    \begin{align*}
        y'(t)+\lambda y(t)\leq h(t) \qquad for \;all\;t>0.
    \end{align*}
    Then
    \begin{align*}
        y(t)\rightarrow 0\qquad as\; t\rightarrow\infty.
    \end{align*}
\end{lemma}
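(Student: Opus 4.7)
\textbf{Proof proposal for Lemma \ref{lemma4.0}.} The plan is to reduce the claim to a pure ODE estimate via the integrating-factor form of the differential inequality, and then to exploit the hypothesis on $h$ through a dyadic/unit-length decomposition of the resulting convolution.

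First I would multiply the inequality $y'(t)+\lambda y(t)\leq h(t)$ by $e^{\lambda t}$ to obtain $(e^{\lambda t}y(t))'\leq e^{\lambda t}h(t)$, and integrate from $0$ to $t$. This yields the representation
\begin{equation*}
y(t)\leq e^{-\lambda t}y(0)+\int_0^t e^{-\lambda(t-s)}h(s)\,ds\qquad\text{for all }t>0.
\end{equation*}
The first term clearly vanishes as $t\to\infty$, so the remaining task is to show that the convolution term tends to $0$. Note that I interpret the integral $\int_t^{t-1}h(s)ds$ in the hypothesis as $\int_{t-1}^t h(s)\,ds$, so that $H(t):=\int_{t-1}^t h(s)\,ds\to 0$ as $t\to\infty$.

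The main step is to estimate $I(t):=\int_0^t e^{-\lambda(t-s)}h(s)\,ds$. I would split the interval $[0,t]$ into the unit intervals $[k-1,k]$ for $k=1,\dots,\lfloor t\rfloor$ together with the remainder $[\lfloor t\rfloor,t]$. On each such unit interval the factor $e^{-\lambda(t-s)}$ is bounded by its value at the right endpoint, so
\begin{equation*}
I(t)\leq \sum_{k=1}^{\lfloor t\rfloor}e^{-\lambda(t-k)}\int_{k-1}^{k}h(s)\,ds+\int_{\lfloor t\rfloor}^{t}h(s)\,ds=\sum_{k=1}^{\lfloor t\rfloor}e^{-\lambda(t-k)}H(k)+\text{rem}(t),
\end{equation*}
where the remainder term is controlled by $H(t)+H(\lfloor t\rfloor)$ and hence tends to $0$.

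Given $\varepsilon>0$, fix $N\in\mathbb{N}$ so large that $H(k)\leq\varepsilon$ for every $k\geq N$. Splitting the sum at $k=N$, the tail is estimated by a geometric series,
\begin{equation*}
\sum_{k=N}^{\lfloor t\rfloor}e^{-\lambda(t-k)}H(k)\leq \varepsilon\sum_{k=N}^{\lfloor t\rfloor}e^{-\lambda(t-k)}\leq \frac{\varepsilon}{1-e^{-\lambda}},
\end{equation*}
while the head $\sum_{k=1}^{N-1}e^{-\lambda(t-k)}H(k)$ is a finite sum with the factor $e^{-\lambda t}$ pulled out, so it vanishes as $t\to\infty$. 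Combining these bounds gives $\limsup_{t\to\infty}y(t)\leq \varepsilon/(1-e^{-\lambda})$, and since $\varepsilon>0$ is arbitrary and $y\geq 0$, the desired conclusion $y(t)\to 0$ follows. The main obstacle is essentially bookkeeping: ensuring the geometric sum is handled uniformly in $t$ and that the finitely many ``early'' contributions decay; no functional-analytic subtlety is needed beyond the elementary integrating-factor identity.
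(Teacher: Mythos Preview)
Your argument is correct. Note, however, that the paper does not actually prove this lemma: it is quoted directly from \cite{TaoW3} and stated without proof, so there is no ``paper's own proof'' to compare against. Your integrating-factor representation followed by the unit-interval decomposition and geometric-series estimate is the standard way to establish this elementary ODE fact, and all the steps---including the handling of the remainder $\int_{\lfloor t\rfloor}^t h$ and the head/tail split of the sum---go through as you describe.
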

The estimates established in \eqref{2.18} and \eqref{2.19-1}  imply the following asymptotic decay behavior for the second solution component.
\begin{lemma}\label{lemma4.5}
Let $\alpha\in\left(\frac{3}{2},\frac{19}{12}\right)$ and $K>0$ with the property that \eqref{1.6} holds.  Then  we have
\begin{align}\label{4.8}
    v(\cdot,t)\rightarrow 0\qquad in\;W^{1,p}(\Omega)\;for\;all\;p\geq 1\qquad as\; t\to \infty.
\end{align}
\end{lemma}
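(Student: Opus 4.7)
The plan is to split the $W^{1,p}$-decay of $v$ into separate $L^p$-decay statements for $v$ and for $\nabla v$, working formally at the regularized level and passing to the limit via the strong convergences \eqref{3.45}--\eqref{3.47}.

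\textbf{Decay of $v$.} Integrating the second equation of \eqref{2.5} over $\Omega$ gives $\frac{d}{dt}\int_\Omega v_\varepsilon = -\int_\Omega u_\varepsilon v_\varepsilon \leq 0$, so in the limit $t \mapsto \int_\Omega v(\cdot,t)$ is nonincreasing; coupled with $\int_0^\infty\int_\Omega v<\infty$ from Lemma~\ref{lemma2.5}, this forces $\|v(\cdot,t)\|_{L^1(\Omega)}\to 0$. Combining this with the uniform bound $\|v\|_{L^\infty}\leq K$ from \eqref{2.8} and the interpolation $\|v\|_{L^p}^p\leq \|v\|_{L^\infty}^{p-1}\|v\|_{L^1}$ yields $\|v(\cdot,t)\|_{L^p(\Omega)}\to 0$ for every $p\geq 1$.

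\textbf{Decay of $\nabla v$.} Testing the $v$-equation with $v$ and integrating by parts gives $\tfrac{1}{2}\tfrac{d}{dt}\int_\Omega v^2 + \int_\Omega|\nabla v|^2 + \int_\Omega uv^2 = 0$, from which $\int_0^\infty\int_\Omega|\nabla v|^2 \leq \tfrac{1}{2}\|v_0\|_{L^2}^2 < \infty$. Testing instead with $-\Delta v$, and using $\partial_\nu v = 0$ together with Young's inequality, one obtains
\begin{equation*}
\frac{d}{dt}\int_\Omega|\nabla v|^2 + \int_\Omega|\Delta v|^2 \leq \int_\Omega u^2 v^2 =: h(t).
\end{equation*}
Employing $\|v\|_{L^\infty}\leq K$, the elementary pointwise bound $u^2 \leq 1+u^{p+1}$ (valid for any fixed $p>1$), and the space-time estimates \eqref{2.18} and \eqref{3.30-1}, one deduces $\int_0^\infty h(t)\,dt<\infty$.

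\textbf{Conclusion via a one-sided ODE argument.} Writing $y(t) := \int_\Omega |\nabla v(\cdot,t)|^2$, the three ingredients $y'\leq h$, $\int_0^\infty y<\infty$ (which forces $\liminf_{t\to\infty} y(t)=0$), and $\int_0^\infty h<\infty$ combine to give $y(t)\to 0$: for any $\eta>0$, first choose $T_\eta$ with $\int_{T_\eta}^\infty h<\eta/2$, then pick $t_1\geq T_\eta$ with $y(t_1)<\eta/2$; integrating $y'\leq h$ on $(t_1,t)$ yields $y(t)<\eta$ for all $t\geq t_1$. Finally, combining $\|\nabla v(\cdot,t)\|_{L^2}\to 0$ with the uniform bound $\|\nabla v(\cdot,t)\|_{L^\infty}\leq C(K)$ from \eqref{1.9}, the interpolation $\|\nabla v\|_{L^p}^p \leq \|\nabla v\|_{L^\infty}^{p-2}\|\nabla v\|_{L^2}^2$ for $p\geq 2$, together with Hölder's inequality for $p\in[1,2]$, yields $\|\nabla v(\cdot,t)\|_{L^p(\Omega)}\to 0$ for all $p\geq 1$. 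The main obstacle is this last conclusion step: the standard ``integrability plus uniform continuity'' route would demand two-sided control of $y'$, which is not immediate; the key observation is that the upper bound $y'\leq h$ with $\int_0^\infty h<\infty$ already suffices once $\liminf y = 0$ is in hand.
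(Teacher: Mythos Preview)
Your proof is correct and takes a genuinely different route in the ODE step for $y(t)=\int_\Omega|\nabla v|^2$. The paper converts $-\int_\Omega|\Delta v|^2$ into a linear damping term via $\int_\Omega|\nabla v|^2\le\int_\Omega|\Delta v|^2+\tfrac14\int_\Omega v^2$, obtains $y'+y\le g$ with $\int_{t-1}^t g\to 0$, and then invokes a cited ODE lemma (Lemma~\ref{lemma4.0}). You instead discard the $\Delta v$ term altogether, use the separate test with $v$ to secure $\int_0^\infty y<\infty$ (hence $\liminf y=0$), and couple this with $y'\le h$, $\int_0^\infty h<\infty$ to force $y(t)\to 0$ by an elementary tail argument. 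Your route is self-contained and avoids both the algebraic step generating damping and the external lemma; the paper's route is slightly more robust in that it only needs $\int_{t-1}^t g\to 0$ rather than global integrability of the forcing. You also make the $L^p$-decay of $v$ itself explicit (monotonicity of $\int_\Omega v$ plus $\int_0^\infty\int_\Omega v<\infty$), which the paper's proof leaves implicit.
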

\begin{proof}
Thanks to \eqref{2.10},
\eqref{2.18}, \eqref{3.45} and
\eqref{3.46}, the application of  Fatou's lemma  then yields
$$\int_0^{\infty}\int_{\Omega}v<\infty \quad\text{and}\quad \int_0^{\infty}\int_{\Omega}uv<\infty.$$
Consequently,
\begin{align}\label{a}
    \int_{t-1}^{t}\int_{\Omega}v\rightarrow 0\quad\text{and}\quad\int_{t-1}^{t}\int_{\Omega}uv\rightarrow 0\qquad as\; t\to \infty.
\end{align}
Testing the second equation in \eqref{1.4} by $-\Delta v$, applying Young’s inequality together with H\"{o}lder's inequality, one can find $c_1>0$ such that
\begin{align}\label{e}
    \frac{d}{dt}\int_{\Omega}|\nabla v|^2&=-2\int_{\Omega}|\Delta v|^2+2\int_{\Omega}uv\Delta v\nonumber \\
    &\leq -\int_{\Omega}|\Delta v|^2+\int_{\Omega}u^2v^2\nonumber \\
    &\leq-\int_{\Omega}|\Delta v|^2+\|v_0\|_{L^{\infty}(\Omega)}\left(\int_{\Omega}u^3\right)^{\frac{1}{2}}\left(\int_{\Omega}uv\right)^{\frac{1}{2}}\nonumber \\
   &\leq -\int_{\Omega}|\Delta v|^2+c_1\left(\int_{\Omega}uv\right)^{\frac{1}{2}}
    \end{align}
for all $t>0$, due to \eqref{2.8} and \eqref{3.30}. Apart from that, integration by parts followed by Young’s inequality yields
\begin{align}\label{b}
    \int_{\Omega}|\nabla v|^2=\int_{\Omega}v\Delta v\leq \int_{\Omega}|\Delta v|^2+\frac{1}{4}\int_{\Omega}v^2.
\end{align}
 Combining \eqref{2.8}, \eqref{e} with \eqref{b}, we get
\begin{align}\label{c}
\frac{d}{dt}\int_{\Omega}|\nabla v|^2+\int_{\Omega}|\nabla v|^2\leq c_1\left(\int_{\Omega}uv\right)^{\frac{1}{2}}+\frac{\|v_0\|_{L^{\infty}(\Omega)}}{4}\int_{\Omega}v.
\end{align}
 Define $$y(t):=\int_{\Omega}|\nabla v|^2\qquad \text{and}\qquad g(t):=c_1\left(\int_{\Omega}uv\right)^{\frac{1}{2}}+\frac{\|v_0\|_{L^{\infty}(\Omega)}}{4}\int_{\Omega}v.$$ Then it follows from \eqref{c} that
\begin{align*}
    y'(t)+y(t)\leq g(t).
\end{align*}
Furthermore, by H\"{o}lder's inequality and \eqref{a}, we arrive at
\begin{align}\label{d}
\int_{t-1}^{t}g(t)\leq c_1\left(\int_{t-1}^{t}\int_{\Omega}uv\right)^{\frac{1}{2}}+\frac{\|v_0\|_{L^{\infty}(\Omega)}}{4}\int_{t-1}^{t}\int_{\Omega}v\rightarrow 0\qquad as\; t\to \infty.
\end{align}
As an application of Lemma \ref{lemma4.0}, we infer that
\begin{align}\label{4.8-1}
    \int_{\Omega}|\nabla v|^2\rightarrow 0\qquad as\; t\to \infty.
\end{align}
According to the interpolation inequality, \eqref{4.8-1} and \eqref{3.40a}, we obtain  for any $p>2$
\begin{align}\label{4.8-2}
    \|\nabla v(\cdot,t)\|_{L^{p}(\Omega)}\leq \|\nabla v(\cdot,t)\|_{L^{\infty}(\Omega)}^{\frac{p-2}{p}}\|\nabla v(\cdot,t)\|_{L^{2}(\Omega)}^{\frac{2}{p}}\leq C\|\nabla v(\cdot,t)\|_{L^{2}(\Omega)}^{\frac{2}{p}}\rightarrow 0 \quad \text{as} \quad t \to \infty.
\end{align}
Therefore \eqref{4.8} follows from \eqref{4.8-2}.
\end{proof}

  Beyond the stabilization  result established in Theorem 1.2 for solutions to
   \eqref{1.4}, we further demonstrate that  the  limiting  profile $u_{\infty}$ of the solution component $u$  obtained in \eqref{4.7} becomes non-homogeneous  when the initial signal concentration $v_0$ is sufficiently small, provided that $u_0$ is not identically constant.

\begin{lemma}\label{lemma4.6}
Let $\alpha \in \left( \frac{3}{2}, \frac{19}{12} \right)$ and $K > 0$ be such that \eqref{1.6} holds and suppose $u_0\not\equiv$ const. Then there exists $\delta_2 = \delta_2(K, u_0)>0$ such that if $u_0$ and $v_0$ satisfy \eqref{1.5} and \eqref{1.6}, as well as
\begin{align}\label{4.11-1}
    \int_{\Omega}v_0< \delta_2,
\end{align}
the corresponding limit function $u_{\infty} \in (W^{1,\infty}(\Omega))^*$ from Lemma \ref{lemma4.3} satisfies
$u_{\infty} \not\equiv \mathrm{const}$.
\end{lemma}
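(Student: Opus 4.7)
The plan is to exploit the quantitative proximity of $u_\infty$ to $u_0$ furnished by \eqref{4.7-1} in Lemma \ref{lemma4.3}, namely
$$\|u_\infty - u_0\|_{(W^{1,\infty}(\Omega))^*} \leq C(K)\cdot\Bigl\{\int_\Omega v_0\Bigr\}^{\sigma},$$
combined with a strictly positive lower bound on the distance between $u_0$ and the one-dimensional subspace of constants, measured in the same dual norm. If I can show that
$$d_0 := \inf_{c\in\mathbb{R}}\|u_0 - c\|_{(W^{1,\infty}(\Omega))^*} > 0$$
whenever $u_0 \not\equiv \mathrm{const}$, then choosing $\delta_2 = \delta_2(K,u_0) > 0$ so small that $C(K)\,\delta_2^{\sigma} < d_0$ will be enough: if $u_\infty$ were identically equal to some constant $c_*$, then by definition of $d_0$, we would have $\|u_\infty - u_0\|_{(W^{1,\infty}(\Omega))^*} \geq d_0$, contradicting the stability estimate.

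The only nontrivial step is therefore to certify that $d_0 > 0$. For this I would pick a single admissible test function witnessing a $c$-independent lower bound. Setting $\bar{u}_0 := |\Omega|^{-1}\int_\Omega u_0$, the function
$$\psi_0 := \frac{u_0 - \bar{u}_0}{\|u_0 - \bar{u}_0\|_{W^{1,\infty}(\Omega)}}$$
is well defined because \eqref{1.5} gives $u_0 \in W^{1,\infty}(\Omega)$ and the hypothesis $u_0 \not\equiv \mathrm{const}$ ensures that the denominator is strictly positive. By construction $\psi_0 \in W^{1,\infty}(\Omega)$ with $\|\psi_0\|_{W^{1,\infty}(\Omega)} \leq 1$ and $\int_\Omega \psi_0 = 0$, so for \emph{every} $c\in\mathbb{R}$,
$$\|u_0 - c\|_{(W^{1,\infty}(\Omega))^*} \geq \int_\Omega (u_0 - c)\,\psi_0 \,=\, \int_\Omega u_0\,\psi_0 \,=\, \frac{\|u_0 - \bar{u}_0\|_{L^2(\Omega)}^2}{\|u_0 - \bar{u}_0\|_{W^{1,\infty}(\Omega)}} \,>\, 0,$$
and passing to the infimum over $c$ produces the desired quantitative gap $d_0 \geq \|u_0 - \bar{u}_0\|_{L^2(\Omega)}^2 / \|u_0 - \bar{u}_0\|_{W^{1,\infty}(\Omega)} > 0$, which depends only on $u_0$.

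With $d_0 > 0$ in hand, I would finalize the proof by defining
$$\delta_2 := \tfrac{1}{2}\bigl(d_0/C(K)\bigr)^{1/\sigma},$$
so that the condition $\int_\Omega v_0 \leq \delta_2$ combined with \eqref{4.7-1} yields $\|u_\infty - u_0\|_{(W^{1,\infty}(\Omega))^*} \leq \tfrac{1}{2}d_0 < d_0$, contradicting $u_\infty \equiv \mathrm{const}$. I do not anticipate a genuine obstacle here: the heavy lifting is already encoded in Lemma \ref{lemma4.3}, which converts smallness of $\int_\Omega v_0$ into smallness of the long-time displacement of $u$ from its initial datum in $(W^{1,\infty}(\Omega))^*$; the remaining task is purely an exercise in dual-norm geometry, resolved by the explicit zero-mean test function above.
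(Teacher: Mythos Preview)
Your proof is correct and takes a genuinely different route from the paper's. The paper argues by localizing: since $u_0$ is continuous and nonconstant, it fixes open sets $\Omega_1,\Omega_2$ where $u_0\le c_1$ and $u_0\ge c_2$ with $c_2>c_1$, picks nonnegative $C_0^\infty$ bump functions $\psi_1,\psi_2$ supported there with $\|\psi_i\|_{W^{1,\infty}}\le 1$, and then shows (after a case split on whether $u_\infty\in L^\infty(\Omega)$) that $\esssup_\Omega u_\infty\ge c_2-\kappa$ and $\essinf_\Omega u_\infty\le c_1+\kappa$ with $\kappa=(c_2-c_1)/4$, by testing $u_0-u_\infty$ against $\psi_2$ (resp.\ $\psi_1$) and invoking \eqref{4.7-1}. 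Your argument instead uses a single zero-mean test function $\psi_0=(u_0-\bar u_0)/\|u_0-\bar u_0\|_{W^{1,\infty}}$, so that the constant $c$ drops out of the pairing and one obtains the uniform lower bound $\|u_0-c\|_{(W^{1,\infty}(\Omega))^*}\ge \|u_0-\bar u_0\|_{L^2}^2/\|u_0-\bar u_0\|_{W^{1,\infty}}>0$ in one stroke. Your approach is shorter and avoids both the case distinction and the need to track esssup/essinf of $u_\infty$; the paper's approach, while slightly longer, yields the additional quantitative information \eqref{4.14} on the oscillation of $u_\infty$, which is not required for the lemma as stated.
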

\begin{proof}
     Since $u_{0}$ is continuous and not constant, we can fix numbers $c_{1}>0$, $c_{2}>c_{1}$, as well as open sets $\Omega_1\subset\Omega$ and $\Omega_2\subset\Omega$, such that $u_{0}\leq c_{1}$ in $\Omega_1$ and $u_{0}\geq c_{2}$ in $\Omega_2$.  Furthermore,     let  functions $0\leq \psi_{i}\in C^{\infty}_{0}(\Omega)$, $i\in\{1,2\}$, such that $\operatorname{supp}\psi_{i}\subset \Omega_i$ and $\|\psi_{i}\|_{W^{1,\infty}(\Omega)}\leq1$.
     Choose $\delta_2= \delta_2(K,
     u_0)>0$ sufficiently small so that 
\begin{align}\label{4.12}
    C(K)\cdot\delta_2^{\sigma}<\kappa\cdot \min\left\{\int_{\Omega_1}\psi_1,\int_{\Omega_2}\psi_2\right\}
\end{align}
with $\kappa:=\frac{c_{2}-c_{1}}{4}$,  $C(K)>0$ and $\sigma>0$  given as in Lemma \ref{lemma4.1}.

Now if $u_{\infty}\not\in L^\infty(\Omega)$, then $u_{\infty} $ is non-constant   because any constant function is bounded, and the proof is thus complete.  Therefore, we may assume $u_{\infty} \in L^\infty(\Omega)$.  At this position, we claim  that
     \begin{align}\label{4.14}
        \esssup_{\Omega} u_{\infty}\geq c_{2}-\kappa\quad\text{and} \quad\essinf_{\Omega} u_{\infty}\leq c_{1}+\kappa.
    \end{align}
To verify this, supposed that 
     $\esssup_{\Omega} u_{\infty} < c_2 - \kappa$,  the properties of $\psi_2$ imply
\begin{align}\label{4.24}
    \|u_{\infty}-u_{0}\|_{(W^{1,\infty}(\Omega))^{*}}
    &\geq \left| \int_{\Omega} (u_{\infty} - u_0)\cdot \psi_2 \right| \nonumber\\
    &=\int_{\Omega_2}(u_{0}-u_{\infty})\cdot\psi_{2}\\
       &\geq\int_{\Omega_2}\{c_2+(\kappa-c_{2})\}\cdot\psi_{2} \nonumber\\
    &=\kappa\int_{\Omega_2}\psi_2,  \nonumber
\end{align}
On the other hand, Lemma \ref{lemma4.3} and \eqref{4.12} yield
\begin{align}\label{4.13}
    \|u_{\infty}-u_0\|_{(W^{1,\infty}(\Omega))^*} \leq  C(K)\cdot \left\{\int_{\Omega} v_{0}\right\}^{\sigma}
    <  \kappa \cdot\min\left \{\int_{\Omega_1}\psi_1,\int_{\Omega_2}\psi_2\right\},
\end{align}
which contradicts \eqref{4.24}.  The second inequality of \eqref{4.14}  follows analogously,  and thereby  $u_{\infty}$
 cannot coincide with any constant due to our choice of $\kappa$.
\end{proof}
\begin{proof}[Proof of Theorem \ref{Th1.3}] The non-triviality of the limiting profile $u_\infty$ in Theorem 1.3,
 along with the properties stated in \eqref{1.10-1},  follows directly from Lemma \ref{lemma4.3}--Lemma \ref{lemma4.6}, which completes the proof of the theorem.
\end{proof}

\vspace{.7cm}


\vspace{.3cm}

\noindent{\bf Conflict of interest}:
No potential conflict of interest is reported by the authors.

\noindent{\bf Ethics approval}:
Ethics approval is not required for this research.


\noindent{\bf Data availability statement}:
All data that support the findings of this study are included within the article. 

\noindent{\bf Financial support}: This work was supported by the National Natural Science Foundation of China (No. 12071030 and  No. 12271186) and  Beijing Key Laboratory on MCAACI.


\end{document}